\newtheorem{lm}{Lemma}[section]
\newtheorem{teo}[lm]{Theorem}
\newtheorem{prop}[lm]{Proposition}
\newtheorem{coro}[lm]{Corollary}
\theoremstyle{definition}
\newtheorem{oss}[lm]{Remark}
\newtheorem{defi}[lm]{Definition}
\newtheorem*{ack}{Acknowledgments}
\author[Brasco]{Lorenzo Brasco}
\author[Cinti]{Eleonora Cinti}
\author[Vita]{Stefano Vita}
\address[L.\ Brasco]{Dipartimento di Matematica e Informatica
	\newline\indent
	Universit\`a degli Studi di Ferrara
	\newline\indent
	Via Machiavelli 35, 44121 Ferrara, Italy}
\email{lorenzo.brasco@unife.it}
\address[E.\ Cinti]{Dipartimento di Matematica
	\newline\indent
	Universit\`a degli Studi di Bologna
	\newline\indent
	 Piazza di Porta San Donato 5, 40126 Bologna, Italy}
\email{eleonora.cinti5@unibo.it}
\address[S.\ Vita]{Dipartimento di Matematica e Applicazioni
\newline\indent
Universit\`a degli Studi di Milano Bicocca
\newline\indent
Via Cozzi 55, 20125 Milano, Italy}
\email{stefano.vita@unimib.it}
\title[Quantitative fractional Faber-Krahn]{A quantitative stability estimate\\ for the fractional Faber-Krahn inequality}
\subjclass[2010]{47A75, 49Q20, 35R11}
\keywords{Stability of eigenvalues, fractional Laplacian}
\numberwithin{equation}{section}
\dedicatory{Remembering Rosalind Elsie Franklin on the centenary of her birth}
\begin{document}

\begin{abstract}
We prove a quantitative version of the Faber-Krahn inequality for the first eigenvalue of the fractional Dirichlet-Laplacian of order $s$.
This is done by using the so-called Caffarelli-Silvestre extension and adapting to the nonlocal setting a trick by Hansen and Nadirashvili. The relevant stability estimate comes with an explicit constant, which is stable as the fractional order of differentiability goes to $1$. 
\end{abstract}

\maketitle

\begin{center}
\begin{minipage}{11cm}
\small
\tableofcontents
\end{minipage}
\end{center}

\section{Introduction}

\subsection{The Faber-Krahn inequality}
The celebrated {\it Faber-Krahn inequality} asserts that for every open set $\Omega\subset\mathbb{R}^N$ with finite $N-$dimensional Lebesgue measure, we have the sharp estimate
\begin{equation}
\label{faber}
|\Omega|^\frac{2}{N}\,\lambda(\Omega)\ge |B|^\frac{2}{N}\,\lambda(B),
\end{equation}
where $B$ is any $N-$dimensional ball. Moreover, equality in \eqref{faber} is uniquely attained by balls.
The quantity $\lambda(\Omega)$ is the {\it first eigenvalue of the Dirichlet-Laplacian on} $\Omega$. In other words, it is the smallest real number $\lambda$ such that the boundary value problem
\[
\left\{\begin{array}{rccl}
-\Delta u&=&\lambda\,u,& \mbox{ in }\Omega,\\
u&=& 0, & \mbox{ on } \partial\Omega,
\end{array}
\right.
\]
admits a nontrivial solution $u\in \mathcal{D}^{1,2}_0(\Omega)$. The latter is the homogeneous Sobolev space, defined as the completion of $C^\infty_0(\Omega)$ with respect to the norm
\[
\varphi\mapsto \left(\int_\Omega |\nabla \varphi|^2\,dx\right)^\frac{1}{2}.
\]
By observing that $\lambda$ scales like a length to the power $-2$, it is easily seen that the inequality in \eqref{faber} is scale invariant. Moreover,
the Faber-Krahn inequality can be equivalently rephrased by saying that balls (uniquely) solve the shape optimization problem
\[
\min\{\lambda(\Omega)\, :\, \Omega\subset \mathbb{R}^N \mbox{ open set with }|\Omega|=c\},
\]
for every $c>0$. 
\par
We briefly recall that a way to prove \eqref{faber} is by using the {\it Schwarz symmetrization}. In other words, given $u$ a non-negative function, we can construct the unique radially symmetric decreasing function $u^*$ such that 
\[
|\{x\, :\, u(x)>t\}|=|\{x\, :\, u^*(x)>t\}|,\qquad \mbox{ for every } t\ge0.
\] 
By construction, the two functions $u$ and $u^*$ are equi-measurable, thus all the $L^q$ norms of $u$ and $u^*$ coincide.
Moreover, by the well-known {\it P\'olya-Szeg\H{o} principle} we know that
\begin{equation}
\label{PSintro}
\int_\Omega |\nabla u|^2\,dx\ge \int_{B_\Omega} |\nabla u^*|^2\,dx,
\end{equation}
where $B_\Omega$ is the ball centered at the origin, such that $|\Omega|=|B_\Omega|$. By using these two facts and the variational characterization
\[
\lambda(\Omega)=\min_{u\in \mathcal{D}^{1,2}_0(\Omega)} \left\{\int_\Omega |\nabla u|^2\,dx\, :\, \|u\|_{L^2(\Omega)}=1\right\},
\]
one immediately gets \eqref{faber}.
\par
Starting with the works of Hansen \& Nadirashvili \cite{HN} and Melas \cite{Me}, there has been a surge of interest towards the {\it stability issue} for the Faber-Krahn inequality. In other words, one seeks for quantitative enhancements of \eqref{faber}, containing remainder terms measuring the deviation of a set $\Omega$ from spherical symmetry. We refer to the book chapter \cite{BD} for a comprehensive treatment of the subject. Here we only wish to recall that, at present, the best result of this type is (see \cite[Main Theorem]{BDV}) 
\begin{equation}
\label{bradepvel}
|\Omega|^\frac{2}{N}\,\lambda(\Omega)- |B|^\frac{2}{N}\,\lambda(B)\ge C\,\mathcal{A}(\Omega)^2,
\end{equation}
where $C=C(N)>0$ and $\mathcal{A}$ is the so-called {\it Fraenkel asymmetry}, defined by
\[
\mathcal{A}(\Omega)=\inf\left\{\frac{|\Omega\Delta B|}{|\Omega|}\, :\, B \mbox{ is a ball with }|B|=|\Omega| \right\}.
\]
The symbol $\Omega \Delta B$ stands for the symmetric difference of the relevant sets.
\par
Observe that the quantitative Faber-Krahn inequality \eqref{bradepvel} gives an $L^1$ control on how far $\Omega$ is from being a ball, in terms of how far $\Omega$ is from attaining equality in \eqref{faber}. Moreover, we recall that \eqref{bradepvel} {\it is sharp}, in the sense that the exponent $2$ on the asymmetry can not be lowered.
\subsection{The fractional case}
The main goal of this work is to investigate the same kind of question for the {\it fractional Laplacian of order $s$}, where $0<s<1$. This operator, which eventually became quite popular in the last years, is defined by
\[
(-\Delta)^s u(x)=\lim_{\varepsilon\searrow 0}\int_{\mathbb{R}^N\setminus B_\varepsilon(x)}\frac{u(x)-u(y)}{|x-y|^{N+2\,s}}\,dy.
\]
The usual Laplacian operator is (formally) recovered in the limit as $s\nearrow 1$, up to a suitable rescaling.
\par
For our purposes, it is important to remark that such a linear operator has a variational nature. Indeed, it arises as the first variation of the nonlocal quadratic functional
\[
u\mapsto [u]^2_{W^{s,2}(\mathbb{R}^N)}:=\iint_{\mathbb{R}^N\times \mathbb{R}^N} \frac{|u(x)-u(y)|^2}{|x-y|^{N+2\,s}}\,dx\,dy.
\]
\begin{oss}[Limiting cases]
\label{oss:limit}
It is noteworthy to recall that the nonlocal quantity $[\,\cdot\,]^2_{W^{s,2}(\mathbb{R}^N)}$ has an interpolative nature, i.e. it can be thought as a real interpolation with parameter $s$ of the two quantities
\[
\int |u|^2\,dx\qquad \mbox{ and }\qquad \int |\nabla u|^2\,dx.
\]
Then it is natural to expect that
\[
[u]^2_{W^{s,2}(\mathbb{R}^N)}\sim \frac{C}{s}\,\int |u|^2\,dx,\qquad\mbox{ for } s\searrow 0,
\]
and
\[
[u]^2_{W^{s,2}(\mathbb{R}^N)}\sim \frac{C}{1-s}\,\int |\nabla u|^2\,dx,,\qquad\mbox{ for } s\nearrow 1.
\]
This can be made rigourous, see \cite{MS} for the first result and \cite{bourgain} for the second one.
\end{oss}
The {\it first eigenvalue of the fractional Dirichlet-Laplacian of order $s$ on} $\Omega$ is defined as the smallest real number $\lambda$ such that the following boundary value problem
\[
\left\{\begin{array}{rccl}
(-\Delta)^s u&=&\lambda\,u,& \mbox{ in }\Omega,\\
u&=& 0, & \mbox{ in } \mathbb{R}^N\setminus\Omega,
\end{array}
\right.
\]
admits a nontrivial solution $u\in \mathcal{D}^{s,2}_0(\Omega)$. In analogy with the local case, this space is defined as the completion of $C^\infty_0(\Omega)$ with respect to the norm $[\,\cdot\,]_{W^{s,2}(\mathbb{R}^N)}$. We will indicate the first eigenvalue by $\lambda_s(\Omega)$, while a nontrivial solution $u$ will be called a {\it first eigenfunction} for $\Omega$. 
\par
Observe that the operator $(-\Delta)^s$ is nonlocal in nature. Accordingly, the boundary values are prescribed in a nonlocal sense, as well. 
\par
It is not difficult to see that the first eigenvalue has the following variational characterization
\[
\lambda_s(\Omega)=\min_{u\in \mathcal{D}^{s,2}_0(\Omega)} \Big\{[u]_{W^{s,2}(\mathbb{R}^N)}^2\, \, :\, \|u\|_{L^2(\Omega)}=1\}.
\]
Then, as in the case of the Laplacian previously discussed, one can use symmetrization techniques and prove the following {\it fractional Faber-Krahn inequality} (see for example \cite[Theorem 3.5]{BLP})
\begin{equation}
\label{FK}
|\Omega|^\frac{2\,s}{N}\,\lambda_s(\Omega)\ge |B|^\frac{2\,s}{N}\,\lambda_s(B),
\end{equation}
where $B$ is any $N-$dimensional ball. The proof is the same as in the local case, but in place of \eqref{PSintro} one has to use the {\it nonlocal P\'olya-Szeg\H{o} principle}
\begin{equation}
\label{PS}
[u]^2_{W^{s,2}(\mathbb{R}^N)}\ge [u^*]^2_{W^{s,2}(\mathbb{R}^N)},
\end{equation}
proved in \cite[Theorem 9.2]{AL}.
Moreover by using the characterization of equality cases in \eqref{PS} (see \cite[Theorem A.1]{FS}), one can also characterize balls as the unique sets giving the equality sign in \eqref{FK}. 
\begin{oss}[Other proofs]
As in the case of the Laplacian, for $(-\Delta)^s$ it is possible to adopt a probabilistic point of view, as well. Accordingly, it is possible to give a proof of the fractional Faber-Krahn inequality by using probabilistic techniques, see \cite[Theorem 5]{BLM}. In a PDEs-friendly language, the proof of \cite{BLM} is based on the following idea: if one considers the solution $u_\Omega$ to the following nonlocal diffusion problem
\[
\left\{\begin{array}{rccl}
-(-\Delta)^s u&=&u_t,& \mbox{ in }\Omega\times(0,+\infty),\\
u&=& 0, & \mbox{ in } (\mathbb{R}^N\setminus\Omega)\times[0,+\infty),\\
u(0,\cdot)&=&1,& \mbox{ in } \Omega,
\end{array}
\right.
\]
one can prove that
\begin{equation}
\label{luttinger}
u_\Omega(x,t)\le u_{B_\Omega}(0,t),\qquad \mbox{ for } t>0.
\end{equation}
As before, $B_\Omega$ is the ball centered at the origin, such that $|\Omega|=|B_\Omega|$. By using this pointwise bound and the long-time behavior
\[
u_\Omega(x,t)\sim C\,e^{-\lambda_s(\Omega)\,t},\qquad \mbox{ for } t\to +\infty,
\] 
we get the Faber-Krahn inequality by taking the logarithm on both sides of \eqref{luttinger} and passing to the limit as $t$ goes to $+\infty$.
\par
We also wish to mention the alternative proof of \cite[Theorem 6.1]{SVV}, which is quite close in spirit to that of \cite{BLM}.
\end{oss}
The question we want to address in this paper is the following one: {\it is it possible to add a remainder term in \eqref{FK}, in such a way that the {\rm deficit}
\[
|\Omega|^\frac{2\,s}{N}\,\lambda_s(\Omega)- |B|^\frac{2\,s}{N}\,\lambda_s(B),
\]
controls the lack of spherical symmetry of $\Omega$?}
\subsection{Main result}
We give a positive answer to this question. Actually, at the same price, we can treat a more general family of Faber-Krahn inequalities. In order to present our main result, let us introduce some further notation.
\par
For $N\ge 2$ and $0<s<1$, we set
\[
2^*_s=\frac{2\,N}{N-2\,s}.
\]
Then for every $1\le q<2^*_s$, we consider the sharp Poincar\'e-Sobolev constant
\[
\lambda_{s,q}(\Omega)=\min_{u\in \mathcal{D}^{s,2}_0(\Omega)} \Big\{[u]_{W^{s,2}(\mathbb{R}^N)}^2\, :\, \|u\|_{L^q(\Omega)}=1\Big\}.
\]
The particular case $q=2$ coincides with the first eigenvalue of $(-\Delta)^s$ defined above. For $q\not=2$, any solution of the variational problem above solves the following semilinear problem 
\[
\left\{\begin{array}{rlll}
(-\Delta)^s u&=&\lambda_{s,q}(\Omega)\,|u|^{q-2}\,u,& \mbox{ in }\Omega,\\
u&=& 0, & \mbox{ in } \mathbb{R}^N\setminus\Omega.
\end{array}
\right.
\]
By using \eqref{PS}, one immediately gets
a Faber-Krahn inequality for this quantity, i.e.
\[
|\Omega|^{\frac{2}{q}-1+\frac{2\,s}{N}}\,\lambda_{s,q}(\Omega)\ge |B|^{\frac{2}{q}-1+\frac{2\,s}{N}}\,\lambda_{s,q}(B).
\] 
The main result of this paper is the following one. 
\begin{teo}
\label{teo:main}
Let $N\ge 2$, $0<s<1$ and $1\le q<2^*_s$. For every $\Omega\subset\mathbb{R}^N$ open set with finite measure, we have
\[
|\Omega|^{\frac{2}{q}-1+\frac{2\,s}{N}}\,\lambda_{s,q}(\Omega)-|B|^{\frac{2}{q}-1+\frac{2\,s}{N}}\,\lambda_{s,q}(B)\ge \frac{\sigma_1}{(1-s)}\,\mathcal{A}(\Omega)^\frac{3}{s},
\]
for an explicit constant $\sigma_1=\sigma_1(N,s,q)>0$, which is uniform as $s\nearrow 1$.
\end{teo}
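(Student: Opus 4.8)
The plan is to combine three ingredients: the Caffarelli--Silvestre extension, which replaces the nonlocal energy $[\,\cdot\,]_{W^{s,2}(\mathbb R^N)}^2$ by a local weighted Dirichlet integral in $\mathbb R^{N+1}_+$ and is what produces a quantitative P\'olya--Szeg\H{o} estimate with explicit, $s$-stable constants; that quantitative P\'olya--Szeg\H{o} inequality itself, which forces the superlevel sets of an optimal function to be almost balls when the deficit is small; and an adaptation of the Hansen--Nadirashvili truncation trick, which transfers this information from the superlevel sets to the set $\Omega$. It is useful to recall that the (unnormalized) eigenvalue satisfies $\lambda_{s,q}\sim(1-s)^{-1}\times(\text{local quantity})$ as $s\nearrow1$ (see Remark~\ref{oss:limit}), so that the natural $s$-uniform deficit is $(1-s)\,\delta$; this is the source of the factor $1/(1-s)$ in the statement.

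\textbf{Step 1: reductions and setup.} Since the inequality is scale invariant we normalize $|\Omega|=|B|$, and by inner approximation together with the continuity of $\Omega\mapsto\lambda_{s,q}(\Omega)$ we may assume $\Omega$ bounded; using the characterization of equality in \eqref{FK} and a compactness argument we may also assume $\mathcal A(\Omega)\le\overline\varepsilon(N,s,q)$, the complementary regime being trivial because there the deficit $\delta:=|B|^{\frac2q-1+\frac{2s}{N}}\big(\lambda_{s,q}(\Omega)-\lambda_{s,q}(B)\big)$ is bounded below by a positive constant. Let $u\ge0$ be a minimizer for $\lambda_{s,q}(\Omega)$ with $\|u\|_{L^q(\Omega)}=1$, so $[u]_{W^{s,2}(\mathbb R^N)}^2=\lambda_{s,q}(\Omega)$; let $U$ be its Caffarelli--Silvestre extension to $\mathbb R^{N+1}_+$, and let $U^{\sharp}(\cdot,y)$ be the Schwarz rearrangement in the $x$ variables of $U(\cdot,y)$, whose trace is $u^{*}$ and which is an admissible competitor for $B$.

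\textbf{Step 2: splitting the asymmetry and the truncation trick.} Fix a level $t>0$ and write $\Omega_t:=\{u>t\}$. Since the Fraenkel asymmetry is Lipschitz with respect to symmetric differences,
\[
\mathcal A(\Omega)\;\le\;\mathcal A(\Omega_t)\;+\;C_N\,\frac{|\Omega\setminus\Omega_t|}{|\Omega|}\,.
\]
The second term is controlled by the Hansen--Nadirashvili argument: $(u-t)_+\in\mathcal D^{s,2}_0(\Omega_t)$ and, since $\tau\mapsto(\tau-t)_+$ is $1$-Lipschitz, $[(u-t)_+]_{W^{s,2}(\mathbb R^N)}^2\le[u]_{W^{s,2}(\mathbb R^N)}^2$; combining this with the a priori bound $\|u\|_{L^\infty}\le C(N,s,q)$ (with the natural scaling in $s$, from the regularity theory for the semilinear fractional equation), one estimates $\|(u-t)_+\|_{L^q}$ from below and obtains $\lambda_{s,q}(\Omega_t)\le\lambda_{s,q}(\Omega)(1+C\,t)$ for $t$ small. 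Feeding this into the Faber--Krahn inequality for $\Omega_t$, which is nondegenerate since $\frac2q-1+\frac{2s}{N}>0$ precisely when $q<2^*_s$, and recalling $|\Omega|=|B|$, a short computation gives
\[
\frac{|\Omega\setminus\Omega_t|}{|\Omega|}\;\le\;C\big((1-s)\,\delta+t\big),\qquad 0<t\le t_\star .
\]

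\textbf{Step 3: roundness of the superlevel sets and conclusion.} It remains to bound $\mathcal A(\Omega_t)$ by the deficit; here the extension enters. Since $C_{N,s}\int y^{1-2s}|\nabla U|^2=\lambda_{s,q}(\Omega)$ and $C_{N,s}\int y^{1-2s}|\nabla U^{\sharp}|^2\ge\lambda_{s,q}(B)$, the deficit dominates the weighted Dirichlet excess of $U$ over $U^{\sharp}$; estimating this excess from below slice by slice, via the coarea formula and the quantitative isoperimetric inequality in $\mathbb R^N$, one arrives at a quantitative P\'olya--Szeg\H{o}--type inequality
\[
(1-s)\,\delta\;\ge\;c_1\int_{0}^{\,\|u\|_{L^\infty}}\omega(\tau)\,\mathcal A(\Omega_\tau)^2\,d\tau ,
\]
with $\omega\ge0$ and $c_1=c_1(N,s,q)>0$ bounded below as $s\nearrow1$. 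By Step~2 the map $\tau\mapsto\mathcal A(\Omega_\tau)$ oscillates little on a range of levels of definite length, since $|\Omega\setminus\Omega_\tau|$ is small there; hence the integral bound produces a pointwise estimate $\mathcal A(\Omega_{t_0})\le C\big((1-s)\,\delta\big)^{s/3}$ at a well-chosen level $t_0$ comparable to a power of $(1-s)\,\delta$ — it is precisely in trading the (hard to bound from below) weight $\omega$ against the truncation level, together with the fact that a subset of $\Omega$ of small prescribed measure on which $u$ is small costs only an $s$-dependent power of that measure in the seminorm, that one loses and ends up with the exponent $3/s$ rather than $1/2$. Inserting $\mathcal A(\Omega_{t_0})\le C((1-s)\delta)^{s/3}$ and $|\Omega\setminus\Omega_{t_0}|/|\Omega|\le C((1-s)\delta+t_0)$ into Step~2 and optimizing $t_0\sim((1-s)\delta)^{s/3}$ gives
\[
\mathcal A(\Omega)\;\le\;C\big((1-s)\,\delta\big)^{s/3};
\]
raising to the power $3/s$ and rearranging is exactly $\delta\ge\sigma_1(1-s)^{-1}\mathcal A(\Omega)^{3/s}$, with $\sigma_1=\sigma_1(N,s,q)>0$ explicit and bounded below as $s\nearrow1$.

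\textbf{The main obstacle.} The delicate point is Step~3: proving a quantitative P\'olya--Szeg\H{o} inequality in the degenerate weighted setting of the extension with explicit constants that do not degenerate as $s\nearrow1$, and then converting its integral form into a pointwise bound on the asymmetry of one well-chosen superlevel set, compatibly with the truncation estimate of Step~2. The nonlocality of $[\,\cdot\,]_{W^{s,2}(\mathbb R^N)}$ is what prevents a clean level-by-level argument and forces the non-optimal exponent $3/s$; a secondary, more technical, difficulty is to keep all constants — from the extension, the weighted isoperimetric inequality, and the $L^\infty$ a priori bound — uniform as $s\nearrow1$.
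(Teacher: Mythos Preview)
Your outline has the right ingredients (extension, slice-wise quantitative isoperimetric inequality, Hansen--Nadirashvili truncation), but Step~3 contains a genuine gap that is exactly the heart of the paper's argument. When you apply coarea and the quantitative isoperimetric inequality ``slice by slice'' to the extension $U$, the sets that appear are the horizontal level sets $E_{t,z}=\{x:U(x,z)>t\}$ of $U(\cdot,z)$ for $z>0$, \emph{not} the level sets $\Omega_\tau=\{u>\tau\}$ of the trace. The enhanced P\'olya--Szeg\H{o} estimate one actually obtains is
\[
\lambda_{s,q}(\Omega)-\lambda_{s,q}(B)\ \ge\ C_1\int_0^{\infty} z^{1-2s}\int_0^{\infty}\mathcal A(E_{t,z})^2\,\frac{\mu_z(t)^{2(N-1)/N}}{-\mu_z'(t)}\,dt\,dz,
\]
and there is no direct way to rewrite this as $\int \omega(\tau)\,\mathcal A(\Omega_\tau)^2\,d\tau$. (For $z>0$ one has $E_{0,z}=\mathbb R^N$ by the strong minimum principle, so the $z>0$ slices are genuinely different objects from the $\Omega_\tau$.) The missing step is a \emph{transfer of asymmetry} from $\Omega$ to $E_{t,z}$: using the $L^2$ closeness $\|U(\cdot,z)-u\|_{L^2}^2\le \beta_{N,s}\lambda_{s,q}(\Omega)\,z^{2s}$ together with Chebyshev, one shows that for $t$ in a window $[T/4,3T/8]$ and $z\le z_0\sim \big(T\sqrt{\mathcal A(\Omega)}\big)^{1/s}$ the sets $E_{t,z}$ are close to $\Omega$ in measure, hence $\mathcal A(E_{t,z})\ge \tfrac15\,\mathcal A(\Omega)$ and $\big||E_{t,z}|-|\Omega|\big|\le \tfrac13|\Omega|\mathcal A(\Omega)$. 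Here $T$ is the specific threshold level for which $|\Omega\setminus\Omega_T|\approx \tfrac19|\Omega|\mathcal A(\Omega)$.

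This transfer is also what actually produces the exponent $3/s$: integrating $z^{1-2s}$ over $(0,z_0]$ gives $z_0^{2-2s}/(2(1-s))$, and combining this with the Jensen estimate on the $t$–integral (length $\sim T$, inverse mass $\lesssim 1/\mathcal A(\Omega)$) and the asymmetry lower bound yields a factor $\mathcal A(\Omega)\cdot T^2\cdot z_0^{2-2s}/(1-s)$. In the hard case $T>T_0\sim\mathcal A(\Omega)$ this is $\gtrsim \mathcal A(\Omega)^{3/s}/(1-s)$. Your explanation of the $3/s$ loss (``a subset on which $u$ is small costs only an $s$-dependent power in the seminorm'') does not match this mechanism. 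Structurally, the paper does not optimize a level $t_0$ as you suggest; instead it fixes $T$ as above and splits into two cases: if $T\le T_0$ the truncation argument alone (your Step~2, with no extension needed) already gives $\delta\gtrsim \mathcal A(\Omega)$, while if $T>T_0$ the extension argument above applies. Your Step~2 estimate $|\Omega\setminus\Omega_t|/|\Omega|\lesssim (1-s)\delta+t$ is correct in spirit, but by itself it does not interface with the $E_{t,z}$ inequality; you still need the $L^2$--Chebyshev bridge between the two families of level sets.
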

\begin{oss}[Limit cases]
\label{oss:limits}
By keeping in mind Remark \ref{oss:limit}, it is natural to expect that for $s\nearrow 1$
\[
\lambda_{s,q}(\Omega)\sim \frac{C}{1-s}\,\lambda_{1,q}(\Omega),\qquad \mbox{ where } \lambda_{1,q}(\Omega)=\min_{u\in \mathcal{D}^{1,2}_0(\Omega)} \Big\{\|\nabla u\|_{L^2(\Omega)}^2\, :\, \|u\|_{L^q(\Omega)}=1\Big\},
\]
and thus Theorem \ref{teo:main} should give a quantitative Faber-Krahn inequality for the local case, in the limit. 
\par
This is actually the case. More precisely, if we keep $q$ fixed and let $s$ go to $1$ in Theorem \ref{teo:main}, by using the controlled behavior of the constant $\sigma_1$ and Lemma \ref{lm:convergenza} in the Appendix, we end up with the quantitative Faber-Krahn inequality for the Laplacian
\[
|\Omega|^{\frac{2}{q}-1+\frac{2}{N}}\,\lambda_{1,q}(\Omega)-|B|^{\frac{2}{q}-1+\frac{2}{N}}\,\lambda_{1,q}(B)\ge C\, \mathcal{A}(\Omega)^3.
\]
The latter has been already proved by the first author and De Philippis in \cite[Theorem 2.10]{BD}, by adapting the idea of Hansen and Nadirashvili contained in \cite{HN}.
\par
On the other hand, if we keep $0<s<1$ fixed and let $q$ go to $2^*_s$, by Lemma \ref{lm:towardsS} we get
\[
\lim_{q\nearrow 2^*_s} \left(|\Omega|^{\frac{2}{q}-1+\frac{2\,s}{N}}\,\lambda_{s,q}(\Omega)-|B|^{\frac{2}{q}-1+\frac{2\,s}{N}}\,\lambda_{s,q}(B)\right)=0,
\]
which shows that 
\[
\lim_{q\nearrow 2^*_s} \sigma_1=0.
\]
\end{oss}
Apart for the case $q=2$, also the case $q=1$ deserves to be singled out. In analogy with the local case, we call the quantity
\[
\mathcal{T}_s(\Omega):=\frac{1}{\lambda_{s,1}(\Omega)}=\max_{u\in \mathcal{D}^{s,2}_0(\Omega)} \left\{\left(\int_\Omega |u|\,dx\right)^2\, :\, [u]^2_{W^{s,2}(\mathbb{R}^N)}=1\right\},
\]
{\it fractional torsional rigidity of order $s$} of $\Omega$. It is not difficult to see that 
\[
\mathcal{T}_s(\Omega)=\int_\Omega w_{s,\Omega}\,dx,
\] 
where $w_{s,\Omega}$ is called {\it $s-$torsion function of $\Omega$} and is the unique solution to the boundary value problem
\[
\left\{\begin{array}{rccl}
(-\Delta)^s u&=&1,& \mbox{ in }\Omega,\\
u&=& 0, & \mbox{ in } \mathbb{R}^N\setminus\Omega.
\end{array}
\right.
\]
We refer to \cite{Fr} for a detailed study of some interesting features of this function.
As a straightforward consequence of Theorem \ref{teo:main}, we obtain the following
\begin{coro}
\label{coro:torsion}
Let $N\ge 2$ and  $0<s<1$. For every $\Omega\subset\mathbb{R}^N$ open set with finite measure, we have
\[
\frac{\mathcal{T}_s(B)}{|B|^\frac{N+2\,s}{N}}-\frac{\mathcal{T}_s(\Omega)}{|\Omega|^\frac{N+2\,s}{N}}\ge \sigma_2\,(1-s)\,\mathcal{A}(\Omega)^\frac{3}{s},
\]
for an explicit constant $\sigma_2=\sigma_2(N,s)>0$, which is uniform as $s\nearrow 1$.
\end{coro}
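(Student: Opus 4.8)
The plan is to deduce the statement directly from Theorem~\ref{teo:main} applied with the exponent $q=1$, using the identity $\lambda_{s,1}(\Omega)=1/\mathcal{T}_s(\Omega)$ together with a short two-case argument that converts a stability estimate for $\lambda_{s,1}$ into one for its reciprocal.

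First I would observe that for $q=1$ one has $\frac{2}{q}-1+\frac{2s}{N}=\frac{N+2s}{N}$, so Theorem~\ref{teo:main}, after replacing $\lambda_{s,1}$ by $1/\mathcal{T}_s$ on both sides, reads
\[
\frac{|\Omega|^{\frac{N+2s}{N}}}{\mathcal{T}_s(\Omega)}-\frac{|B|^{\frac{N+2s}{N}}}{\mathcal{T}_s(B)}\ \ge\ \frac{\sigma_1}{1-s}\,\mathcal{A}(\Omega)^{\frac{3}{s}},
\]
with $\sigma_1=\sigma_1(N,s,1)>0$ uniform as $s\nearrow1$. Setting
\[
A:=\frac{\mathcal{T}_s(\Omega)}{|\Omega|^{\frac{N+2s}{N}}},\qquad A_B:=\frac{\mathcal{T}_s(B)}{|B|^{\frac{N+2s}{N}}}
\]
(both are scale invariant, so $A_B$ depends only on $N$ and $s$ and any ball $B$ may be used), the inequality becomes $\frac1A-\frac1{A_B}\ge\frac{\sigma_1}{1-s}\,\mathcal{A}(\Omega)^{3/s}$, i.e.
\[
A_B-A\ \ge\ \frac{\sigma_1}{1-s}\,A\,A_B\,\mathcal{A}(\Omega)^{\frac{3}{s}} ,
\]
while the (scale invariant) fractional Faber-Krahn inequality for $\lambda_{s,1}$ gives $0<A\le A_B$.

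The one point requiring care is that $A$ need not be bounded below --- thin sets have small torsional rigidity --- so the factor $A\,A_B$ cannot simply be absorbed into the constant; I would therefore distinguish two cases according to whether $A$ is comparable to $A_B$. If $A\le A_B/2$, then $A_B-A\ge A_B/2$, and since $\mathcal{A}(\Omega)<2$ we have $\mathcal{A}(\Omega)^{3/s}\le 2^{3/s}$, hence $A_B-A\ge 2^{-1-3/s}\,A_B\,\mathcal{A}(\Omega)^{3/s}$. If instead $A> A_B/2$, then $A\,A_B> A_B^2/2$ and the displayed inequality gives $A_B-A\ge \frac{\sigma_1}{2(1-s)}\,A_B^2\,\mathcal{A}(\Omega)^{3/s}$. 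To obtain a constant that is uniform as $s\nearrow1$ I would invoke the asymptotics of $\lambda_{s,1}$ as $s\nearrow1$ (Lemma~\ref{lm:convergenza}; see also Remark~\ref{oss:limit}), which yield that $(1-s)\,\lambda_{s,1}(B)$ is bounded and bounded away from $0$, equivalently $A_B=(1-s)\,c_0(N,s)$ with $c_0(N,s)>0$ bounded below uniformly for $s$ close to $1$. Plugging this into the two estimates above gives, in either case,
\[
\frac{\mathcal{T}_s(B)}{|B|^{\frac{N+2s}{N}}}-\frac{\mathcal{T}_s(\Omega)}{|\Omega|^{\frac{N+2s}{N}}}\ \ge\ \sigma_2\,(1-s)\,\mathcal{A}(\Omega)^{\frac{3}{s}},\qquad \sigma_2:=\min\Big\{2^{-1-\frac{3}{s}}\,c_0(N,s),\ \tfrac{1}{2}\,\sigma_1\,c_0(N,s)^2\Big\}>0 ,
\]
which is the assertion, with $\sigma_2$ uniform as $s\nearrow1$ since $\sigma_1$ and $c_0$ are.

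In short, there is no genuine obstacle here: all the work sits in Theorem~\ref{teo:main}, and the case split above is the standard mechanism for passing from a quantitative bound on $\lambda_{s,1}$ to one on $\mathcal{T}_s=1/\lambda_{s,1}$; the only mildly technical ingredient is the uniform control of $A_B/(1-s)$ as $s\to1$, supplied by Lemma~\ref{lm:convergenza}.
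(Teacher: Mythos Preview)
Your proposal is correct and follows essentially the same approach as the paper: both proofs normalize (you via the scale-invariant quotients $A,A_B$, the paper by taking $|\Omega|=1$), apply Theorem~\ref{teo:main} with $q=1$, split into the two cases $\mathcal{T}_s(\Omega)\le \tfrac12\mathcal{T}_s(B)$ and $\mathcal{T}_s(\Omega)>\tfrac12\mathcal{T}_s(B)$, and then invoke Lemma~\ref{lm:convergenza} to control $\mathcal{T}_s(B)/(1-s)$ and ensure that $\sigma_2$ stays bounded below as $s\nearrow1$. Even your resulting constant $\sigma_2=\min\{2^{-1-3/s}c_0,\tfrac{\sigma_1}{2}c_0^2\}$ coincides with the one recorded in the paper's remark following the proof.
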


\subsection{Strategy of the proof}

For ease of presentation, we now stick to the case $q=2$.
The first naive idea would be to try and insert quantitative elements in the nonlocal P\'olya-Szeg\H{o} principle \eqref{PS}. Already in the local case, this idea is quite complicate to implement and proofs exploiting this route usually produce stability estimates with non-sharp exponents on the Fraenkel asymmetry (see for example \cite{FMP2, Po, Sz}). At present, the best estimate obtained in this way is
\begin{equation}
\label{HN2}
|\Omega|^\frac{2}{N}\,\lambda(\Omega)-|B|^\frac{2}{N}\,\lambda(B)\ge C\, \mathcal{A}(\Omega)^3,
\end{equation}
which is the result of \cite[Theorem 2.10]{BD} already mentioned in Remark \ref{oss:limits}.
\par
In addition to this, this approach is even more complicate in the nonlocal case, due to the absence of a true Coarea Formula for nonlocal integrals. Indeed, the proof of \eqref{PS} is based on the {\it Riesz's rearrangement inequality}, whose identification of equality cases is quite subtle (see \cite{Bu}).
\par
Thus, the first step is to give another proof of the Faber-Krahn inequality, which circumvents the nonlocality of the problem. This is done by adding one extra variable $z$ and considering a suitable extension problem in the upper half-space $\{(x,z)\in\mathbb{R}^N\times\mathbb{R}\, :\, z>0\}$.
Since the appearing of the paper \cite{CS}, this procedure has become standard in the field. 
\par
In the context of stability estimates for nonlocal energies, this idea has been previously employed by Fusco, Millot and Morini in their paper \cite{FMM}. In the latter, the authors proved a quantitative stability estimate for the {\it fractional isoperimetric inequality of order $s$}, i.e.
\[
|\Omega|^\frac{s-N}{N}\,P_s(\Omega)\ge |B|^\frac{s-N}{N}\,P_s(B),
\]
where $B$ is a ball and $P_s$ stands for the $s-$perimeter of a set, defined by
\[
P_s(\Omega)=[1_\Omega]^2_{W^{\frac{s}{2},2}(\mathbb{R}^N)}=2\,\iint_{\Omega\times (\mathbb{R}^N\setminus\Omega)} \frac{1}{|x-y|^{N+s}}\,dx\,dy.
\]
In order to give a better understanding of our strategy, we give a sketch of the proof of the fractional Faber-Krahn inequality by using this extension procedure. We refer to Section \ref{sec:3} for more details.
\par
Given a first eigenfunction $u$ for $\Omega$ with unit $L^2$ norm, we know that
\begin{equation}
\label{1}
\lambda_s(\Omega)=[u]^2_{W^{s,2}(\mathbb{R}^N)}=C\, \iint_{\mathbb{R}^N\times \mathbb{R}_+} z^{1-2\,s}\,|\nabla U|^2\,dx\,dz,
\end{equation}
where $U$ is the unique solution of the following variational problem
\[
\min \left\{\iint_{\mathbb{R}^N\times \mathbb{R}_+} z^{1-2\,s}\,|\nabla V|^2\,dx\,dz\, :\, V=u \mbox{ on } \{z=0\} \right\},
\]
and $C>0$ is a universal constant.
By making a slight abuse of notation and indicating by $U^*$ the Schwarz symmetrization of $U$ with respect to the variable $x$, we have as in \cite{FMM}
\begin{equation}
\label{2}
\iint_{\mathbb{R}^N\times \mathbb{R}_+} z^{1-2\,s}\,|\nabla_x U|^2\,dx\,dz\ge \iint_{\mathbb{R}^N\times \mathbb{R}_+} z^{1-2\,s}\,|\nabla_x U^*|^2\,dx\,dz,
\end{equation}
and
\begin{equation}
\label{2bis}
\iint_{\mathbb{R}^N\times \mathbb{R}_+} z^{1-2\,s}\,|\partial_z U|^2\,dx\,dz\ge \iint_{\mathbb{R}^N\times \mathbb{R}_+} z^{1-2\,s}\,|\partial_z U^*|^2\,dx\,dz.
\end{equation}
Moreover, $U^*$ coincides with $u^*$ on the boundary $\{z=0\}$. Thus we get
\[
\iint_{\mathbb{R}^N\times \mathbb{R}_+} z^{1-2\,s}\,|\nabla U^*|^2\,dx\,dz\ge \min \left\{\iint_{\mathbb{R}^N\times \mathbb{R}_+} z^{1-2\,s}\,|\nabla V|^2\,dx\,dz\, :\, V=u^* \mbox{ on } \{z=0\} \right\},
\]
so that
\begin{equation}
\label{3}
C\,\iint_{\mathbb{R}^N\times \mathbb{R}_+} z^{1-2\,s}\,|\nabla U^*|^2\,dx\,dz\ge [u^*]^2_{W^{s,2}(\mathbb{R}^N)}.
\end{equation}
By observing that $u^*$ is admissible for the variational problem which defines $\lambda_s(B_\Omega)$, we can now get the fractional Faber-Krahn inequality by combining \eqref{1}, \eqref{2}, \eqref{2bis} and \eqref{3}.
\par
In order to prove the quantitative statement of Theorem \ref{teo:main}, the idea is now to insert quantitative elements in the proof of \eqref{2}. We will follow the ideas of Hansen and Nadirashvili, from their above mentioned paper \cite{HN}.
By using the Coarea Formula and the {\it sharp quantitative isoperimetric inequality} (see \cite{FMP}), we can proceed as in the local case of \cite[Theorem 2.10]{BD}. This leads to a quantitative enhancement of the form
\[
\lambda_s(\Omega)-\lambda_s(B_\Omega)\gtrsim \int_{\mathbb{R}_+} z^{1-2\,s}\left(\int_0^{+\infty} \mathcal{A}(E_{t,z})^2\,dt\right)\,dz,
\]
where $E_{t,z}=\{x\in\mathbb{R}^N\,:\, U(x,z)>t\}$ are the ``horizontal'' level sets of the extension $U$. There is now a twofold difficulty: at first, we have to relate the asymmetry of this ``artificial'' level sets to those of the first eigenfuction $u$, i.e. $\Omega_t=\{x\in \Omega\, :\, u(x)>t\}$. In other words, we wish to prove something of the type
\[
\mathcal{A}(E_{t,z})\simeq \mathcal{A}(\Omega_t),\qquad \mbox{ for } t\ll 1 \mbox{ and } z\ll 1.
\]
Secondly, we need to relate all these asymmetries to that of $\Omega$, i.e. the zero level set of $u$. On the other hand, in this process particular attention should be put in avoiding the zero level set of the extension $U$: indeed, by the minimum principle this would coincide with the whole $\mathbb{R}^N$ and the information on the propagation of the asymmetry would be completely lost.
\begin{oss}[Sharpness]
We do not expect our estimate to be sharp. Indeed, it is natural to conjecture that Theorem \ref{teo:main} should hold with $\mathcal{A}(\Omega)^2$ in place of $\mathcal{A}(\Omega)^{3/s}$.
\par
We point out that, already in the local case $s=1$, the sharp quantitative Faber-Krahn inequality of \cite{BDV} comes with an {\it unknown} stability constant. Indeed, the method of proof is based on the so-called {\it selection principle} and is not constructive.
\par
 At present, for $s=1$ the best result with an explicit constant is \eqref{HN2}, where the Fraenkel asymmetry has an exponent $3$.
Then our result can be seen as the natural fractional counterpart of this last result.
\end{oss}

\subsection{Plan of the paper}
In Section \ref{sec:2} we settle all the definitions and the machinery needed in the sequel of the paper. In particular, we introduce the extension problem to the half-space $\mathbb{R}^N\times\mathbb{R}_+$. We show in Section \ref{sec:3} how to exploit this extension problem in order to prove the fractional Faber-Krahn inequality. 
\par
We then pass to consider the stability issue: at this aim, we need some technical results about the propagation of asymmetry from the set $\Omega$ to the ``horizontal'' level sets of the solution of the extension problem. This is the content of Section \ref{sec:4}. 
\par
We eventually prove our main result Theorem \ref{teo:main} in Section \ref{sec:5}. Then in Section \ref{sec:6} we briefly show how it is possible to improve our stability exponent $3/s$ with the same method, provided the sets considered are smoother (Theorem \ref{teo:smooth}).
\par
The paper ends with two appendices, aimed at proving some technical results.

\begin{ack}
E.\,C. has been supported by MINECO grants MTM2014-52402-C3-1-P and MTM2017-84214-C2-1-P, and is part of the Catalan research group 2014 SGR 1083. E.\,C. is a member of the Gruppo Nazionale per l'Analisi Matematica, la Probabilit\`a
e le loro Applicazioni (GNAMPA) of the Istituto Nazionale di Alta Matematica (INdAM).
\end{ack}

\section{Preliminaries}
\label{sec:2}

\subsection{Fractional Sobolev spaces}
Let $0<s<1$, for a measurable function $u:\mathbb{R}^N\to \mathbb{R}$ we define
\[
[u]_{W^{s,2}(\mathbb{R}^N)}=\left(\iint_{\mathbb{R}^N\times\mathbb{R}^N} \frac{|u(x)-u(y)|^2}{|x-y|^{N+2\,s}}\,dx\,dy\right)^\frac{1}{2}.
\]
Accordingly, we consider the {\it Sobolev-Slobodecki\u{\i} space}
\[
W^{s,2}(\mathbb{R}^N)=\Big\{u\in L^2(\mathbb{R}^N)\, :\, [u]_{W^{s,2}(\mathbb{R}^N)}<+\infty\Big\}.
\]
It is a classical fact that 
\[
W^{s,2}(\mathbb{R}^N)=H^s(\mathbb{R}^N)=\left\{F\in\mathcal{S}'(\mathbb{R}^N)\, :\, \int_{\mathbb{R}^N}(1+|\xi|^2)^s\,|\widehat{F}(\xi)|^2\,d\xi<+\infty \right\},
\]
with the usual notation
\[
\widehat \varphi(\xi)=\frac{1}{(2\,\pi)^{N/2}}\,\int_{\mathbb{R}^N}\varphi(x)\,e^{-i\,\xi\cdot x}\,dx,
\]
for the Fourier transform.
\par
For $N\ge 2$, by the {\it fractional Sobolev inequality} we have the continuous inclusion
\[
W^{s,2}(\mathbb{R}^N)\subset L^{2^*_s}(\mathbb{R}^N),
\]
thus, by duality, we get the following continuous inclusion for the topological dual spaces
\begin{equation}
\label{dualedentro}
L^{(2^*_s)'}(\mathbb{R}^N)\subset (W^{s,2}(\mathbb{R}))^*=H^{-s}(\mathbb{R}^N).
\end{equation}
For any open set $\Omega\subset\mathbb{R}^N$, we define the {\it homogeneous Sobolev-Slobodecki\u{\i} space} $\mathcal{D}^{s,2}_0(\Omega)$ as the completion of $C^\infty_0(\Omega)$ with respect to the norm
\[
u\mapsto [u]_{W^{s,2}(\mathbb{R}^N)}.
\] 
Observe that the latter is indeed a norm on $C^\infty_0(\Omega)$.
\par
For $N\ge 2$, by the fractional Sobolev inequality we have that $\mathcal{D}^{s,2}_0(\Omega)$ is always a functional space, such that
\[
\mathcal{D}^{s,2}_0(\Omega)\subset \mathcal{D}^{s,2}_0(\mathbb{R}^N)\subset L^{2^*_s}(\mathbb{R}^N),
\]
with continuous inclusions.
\begin{lm}
\label{lm:eigenfunction}
Let $N\ge 2$, $0<s<1$ and $1\le q<2^*_s$. For every $\Omega\subset\mathbb{R}^N$ open bounded set, we have
\[
\lambda_{s,q}(\Omega)=\inf_{u\in C^\infty_0(\Omega)} \left\{[u]_{W^{s,2}(\mathbb{R}^N)}^2\, :\, \int_\Omega |u|^q\,dx=1\right\}>0.
\]
Moreover, the infimum above is attained by a function $u_\Omega\in\mathcal{D}^{s,2}_0(\Omega)\cap L^\infty(\Omega)\cap C^\beta(\Omega)$ with $0<\beta<\min\{2\,s,1\}$ and such that $u_\Omega>0$ in $\Omega$. 
\end{lm}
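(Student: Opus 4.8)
The plan is to combine the direct method of the calculus of variations with standard regularity theory for the fractional Laplacian. First I would settle the positivity of $\lambda_{s,q}(\Omega)$: since $\Omega$ is bounded we have $|\Omega|<+\infty$, and by the fractional Sobolev inequality every $u\in C^\infty_0(\Omega)$ satisfies $[u]^2_{W^{s,2}(\mathbb{R}^N)}\ge S_{N,s}\,\|u\|^2_{L^{2^*_s}(\Omega)}$; combining this with Hölder's inequality, which is legitimate because $q<2^*_s$ and $|\Omega|<+\infty$, yields $\lambda_{s,q}(\Omega)\ge S_{N,s}\,|\Omega|^{-2\left(\frac1q-\frac{1}{2^*_s}\right)}>0$. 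The identity between the infimum over $C^\infty_0(\Omega)$ and the infimum over $\mathcal{D}^{s,2}_0(\Omega)$ defining $\lambda_{s,q}(\Omega)$ then follows from a routine density argument: the map $u\mapsto [u]^2_{W^{s,2}(\mathbb{R}^N)}$ is continuous for the $\mathcal{D}^{s,2}_0$ norm, the embedding $\mathcal{D}^{s,2}_0(\Omega)\hookrightarrow L^q(\Omega)$ is continuous, and approximating sequences can be renormalized so as to preserve the constraint $\|u\|_{L^q(\Omega)}=1$.

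For the existence of a minimizer I would run the direct method. Take a minimizing sequence $(u_n)\subset\mathcal{D}^{s,2}_0(\Omega)$ with $\|u_n\|_{L^q(\Omega)}=1$ and $[u_n]^2_{W^{s,2}(\mathbb{R}^N)}\to\lambda_{s,q}(\Omega)$. This sequence is bounded in the Hilbert space $\mathcal{D}^{s,2}_0(\Omega)$, hence up to a subsequence $u_n\rightharpoonup u_\Omega$ weakly there, and by weak lower semicontinuity of the Gagliardo seminorm $[u_\Omega]^2_{W^{s,2}(\mathbb{R}^N)}\le\lambda_{s,q}(\Omega)$. The crucial point is that for a bounded $\Omega$ and $q<2^*_s$ the embedding $\mathcal{D}^{s,2}_0(\Omega)\hookrightarrow L^q(\Omega)$ is compact, a fractional Rellich--Kondrachov theorem, so $u_n\to u_\Omega$ strongly in $L^q(\Omega)$ and the constraint passes to the limit, $\|u_\Omega\|_{L^q(\Omega)}=1$; thus $u_\Omega$ is a minimizer. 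Since $[\,|u_\Omega|\,]_{W^{s,2}(\mathbb{R}^N)}\le[u_\Omega]_{W^{s,2}(\mathbb{R}^N)}$ while $\||u_\Omega|\|_{L^q(\Omega)}=\|u_\Omega\|_{L^q(\Omega)}$, I may replace $u_\Omega$ by $|u_\Omega|$ and assume $u_\Omega\ge0$. Writing the Euler--Lagrange equation with a Lagrange multiplier then shows $u_\Omega$ solves $(-\Delta)^s u_\Omega=\lambda_{s,q}(\Omega)\,u_\Omega^{q-1}$ weakly in $\Omega$.

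The remaining assertions are of a regularity nature. Since $u_\Omega\in\mathcal{D}^{s,2}_0(\Omega)\subset L^{2^*_s}(\mathbb{R}^N)$, the right-hand side $f:=\lambda_{s,q}(\Omega)\,u_\Omega^{q-1}$ lies in $L^{2^*_s/(q-1)}(\Omega)$, in $L^\infty$ when $q=1$; a Moser/De Giorgi--Stampacchia iteration for $(-\Delta)^s$, or a direct appeal to the known local boundedness estimates for the fractional Laplacian with $L^p$ data, gives $u_\Omega\in L^\infty(\Omega)$. As $\Omega$ is bounded this forces $f\in L^\infty(\Omega)$, whence interior Hölder regularity for $(-\Delta)^s$ (as in \cite{CS} and the subsequent literature) yields $u_\Omega\in C^\beta(\Omega)$ for every $0<\beta<\min\{2s,1\}$. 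Finally $u_\Omega\ge0$, $u_\Omega\not\equiv0$ and $(-\Delta)^s u_\Omega\ge0$ in $\Omega$, so the strong minimum principle for the fractional Laplacian --- a nonnegative function which is $s$-superharmonic in $\Omega$ and vanishes at an interior point must vanish identically on $\mathbb{R}^N$ --- forces $u_\Omega>0$ throughout $\Omega$.

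I expect the main difficulty to be essentially expository rather than conceptual: all the ingredients --- the fractional Sobolev and compact embeddings, the $L^\infty$ bound via iteration, the interior Hölder estimate, and the strong minimum principle --- are by now standard, and the real work is to invoke them in the right form and to check that minimality survives the passage to $|u_\Omega|$ and the renormalization. The one genuinely delicate bookkeeping step is the integrability bootstrap for the $L^\infty$ bound when $q$ is close to $2^*_s$, since then $f$ is only barely better than $L^1$ and a single application of the gain-of-integrability estimate does not immediately land in $L^\infty$, so the iteration must be run with some care.
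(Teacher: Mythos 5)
Your proposal is correct and follows essentially the same route as the paper: the direct method with the compact embedding $\mathcal{D}^{s,2}_0(\Omega)\hookrightarrow L^q(\Omega)$, passage to $|u_\Omega|$, the Euler--Lagrange equation, an $L^\infty$ bound plus interior H\"older regularity for $(-\Delta)^s$ (which the paper handles by citing \cite{IMS} and \cite{BLS}), and the strong minimum principle for positivity. The only cosmetic difference is that you obtain $\lambda_{s,q}(\Omega)>0$ directly from the fractional Sobolev inequality and H\"older, whereas the paper deduces it from the compactness of the embedding; both are fine.
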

\begin{proof}
The compactness of the embedding $\mathcal{D}^{s,2}_0(\Omega)\hookrightarrow L^q(\Omega)$ (see for example \cite[Corollary 2.8]{BLP}) entails that $\lambda_{s,q}(\Omega)>0$ and that there exists a minimizer $u_\Omega\in\mathcal{D}^{s,2}_0(\Omega)$. The fact that we can choose $u_\Omega$ to be non-negative follows from the fact that
\[
[|u|]_{W^{s,2}(\mathbb{R}^N)}\le [u]_{W^{s,2}(\mathbb{R}^N)}.
\]
Such a minimizer is a non-negative weak solution of 
\[
(-\Delta)^s u=\lambda_{s,q}(\Omega)\,u^{q-1},\qquad \mbox{ in } \Omega.
\]
By using \cite[Theorem 3.2]{IMS}, we have that $u_\Omega\in L^\infty(\Omega)$. The claimed continuity of $u_\Omega$ then follows from \cite[Theorem 1.4]{BLS}, for example.
Finally, we have $u_\Omega>0$ in $\Omega$ by the minimum principle. 
\end{proof}

The next simple result will be useful.
\begin{lm}
\label{lm:foul2}
Let $1\le p<2$ and
\[
0\le \tau<N\,\left(\frac{1}{p}-\frac{1}{2}\right).
\]
Then we have the continuous inclusion
\[
L^p(\mathbb{R}^N)\cap L^2(\mathbb{R}^N)\subset H^{-\tau}(\mathbb{R}^N).
\]
More precisely, for every $u\in L^p(\mathbb{R}^N)\cap L^2(\mathbb{R}^N)$, we have
\begin{equation}
\label{foul2}
\left(\int_{\mathbb{R}^N} |\xi|^{-2\,\tau}\,|\widehat u(\xi)|^2\,d\xi\right)^{1/2}\leq C\,\Big(\|u\|_{L^p(\mathbb{R}^N)}\Big)^{\frac{2\,p}{2-p}\,\frac{\tau}{N}}\,\Big(\|u\|_{L^2(\mathbb{R}^N)}\Big)^{1-\frac{2\,p}{2-p}\,\frac{\tau}{N}},
\end{equation}
for a constant $C=C(N,\tau,p)>0$, which blows-up as $\tau\nearrow N\,(2-p)/(2\,p)$.
\end{lm}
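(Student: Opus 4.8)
The plan is to use the classical frequency-splitting (Bernstein-type) argument together with the Hausdorff--Young inequality. Fix a parameter $R>0$ and split
\[
\int_{\mathbb{R}^N} |\xi|^{-2\,\tau}\,|\widehat u(\xi)|^2\,d\xi = \int_{\{|\xi|\le R\}} |\xi|^{-2\,\tau}\,|\widehat u(\xi)|^2\,d\xi + \int_{\{|\xi|> R\}} |\xi|^{-2\,\tau}\,|\widehat u(\xi)|^2\,d\xi =: I_R + I\!I_R.
\]
For the high frequencies one bounds $|\xi|^{-2\,\tau}\le R^{-2\,\tau}$ on $\{|\xi|>R\}$ and invokes Plancherel's identity, obtaining immediately $I\!I_R\le R^{-2\,\tau}\,\|u\|_{L^2(\mathbb{R}^N)}^2$.

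For the low frequencies I would apply H\"older's inequality with the conjugate exponents $r=p'/2$ and $r'=p/(2-p)$, where $p'=p/(p-1)$ is the H\"older conjugate of $p$, together with the Hausdorff--Young inequality $\|\widehat u\|_{L^{p'}(\mathbb{R}^N)}\le C\,\|u\|_{L^p(\mathbb{R}^N)}$ (which for $p=1$ reads $\|\widehat u\|_{L^\infty}\le (2\,\pi)^{-N/2}\,\|u\|_{L^1}$). This yields
\[
I_R \le \left(\int_{\{|\xi|\le R\}} |\xi|^{-2\,\tau\,r'}\,d\xi\right)^{1/r'}\,\|\widehat u\|_{L^{p'}(\mathbb{R}^N)}^2 \le C\,\left(\int_{\{|\xi|\le R\}} |\xi|^{-2\,\tau\,r'}\,d\xi\right)^{1/r'}\,\|u\|_{L^p(\mathbb{R}^N)}^2.
\]
The crucial observation is that $2\,\tau\,r'=2\,\tau\,p/(2-p)<N$ exactly under the standing assumption $\tau<N\,(1/p-1/2)$, so the integral over the ball converges and equals $c_N\,R^{N-2\,\tau\,r'}/(N-2\,\tau\,r')$; taking the $r'$-th root gives $I_R\le C\,R^{N(2-p)/p-2\,\tau}\,\|u\|_{L^p(\mathbb{R}^N)}^2$, with a constant proportional to $(N-2\,\tau\,r')^{-1/r'}$, which explains the blow-up as $\tau\nearrow N(2-p)/(2\,p)=N\,(1/p-1/2)$.

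It then remains to combine the two bounds and optimize in $R$. Setting $a=N(2-p)/p-2\,\tau>0$ and $b=2\,\tau\ge 0$, we have $I_R+I\!I_R\le C\,\|u\|_{L^p(\mathbb{R}^N)}^2\,R^{a}+\|u\|_{L^2(\mathbb{R}^N)}^2\,R^{-b}$; choosing $R$ so that the two summands balance (and simply letting $R\searrow 0$ in the degenerate case $\tau=0$, where $b=0$) one obtains
\[
\int_{\mathbb{R}^N} |\xi|^{-2\,\tau}\,|\widehat u(\xi)|^2\,d\xi \le C\,\Big(\|u\|_{L^p(\mathbb{R}^N)}^2\Big)^{\frac{b}{a+b}}\,\Big(\|u\|_{L^2(\mathbb{R}^N)}^2\Big)^{\frac{a}{a+b}}.
\]
Since $a+b=N(2-p)/p$, one has $b/(a+b)=\frac{2\,p}{2-p}\,\frac{\tau}{N}$ and $a/(a+b)=1-\frac{2\,p}{2-p}\,\frac{\tau}{N}$, so taking square roots yields exactly \eqref{foul2}, and in particular $u\in H^{-\tau}(\mathbb{R}^N)$. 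There is no genuine obstacle here: the argument is entirely standard, and the only point requiring a little care is the bookkeeping that matches $2\,\tau\,r'<N$ with the hypothesis and produces the stated interpolation exponents, together with checking the two endpoint situations $p=1$ and $\tau=0$.
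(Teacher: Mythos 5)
Your proof is correct and follows essentially the same route as the paper: split frequencies at a threshold, use Hausdorff--Young plus H\"older on the low-frequency part and Plancherel on the high-frequency part, and then optimize the threshold (your exponent $2\,\tau\,r'=2\,\tau\,p/(2-p)$ is exactly the paper's $\tau\,\tfrac{2\,p'}{p'-2}$, and your balancing in $R$ is the paper's minimization in $\lambda$). The bookkeeping of the interpolation exponents, the source of the blow-up of the constant, and the endpoint cases $p=1$ and $\tau=0$ are all handled correctly.
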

\begin{proof}
The assumption $u\in L^p(\mathbb{R}^N)\cap L^2(\mathbb{R}^N)$ entails that $\widehat u\in L^2(\mathbb{R}^N)\cap L^{p'}(\mathbb{R}^N)$. Moreover, we have
\[
\|\widehat u\|_{L^{p'}(\mathbb{R}^N)}\le C_{N,p}\,\|u\|_{L^p(\mathbb{R}^N)}\qquad \mbox{ and }\qquad \|\widehat u\|_{L^2(\mathbb{R}^N)}=\|u\|_{L^2(\mathbb{R}^N)}.
\] 
Hence, by fixing $\lambda>0$ we obtain
\[
\begin{split}
\left(\int_{\mathbb{R}^N}|\xi|^{-2\,\tau}\,|\widehat u(\xi)|^2\,d\xi\right)^{1/2}&\le \left(\int_{\{|\xi|<\lambda\}} |\xi|^{-2\,\tau}\,|\widehat u(\xi)|^2\,d\xi\right)^{1/2}+\left(\int_{\{|\xi|\ge \lambda\}} |\xi|^{-2\,\tau}\,|\widehat u(\xi)|^2\,d\xi\right)^{1/2}\\
&\leq \|\widehat u\|_{L^{p'}(\mathbb{R}^N)}\left(\int_{\{|\xi|<\lambda\}} |\xi|^{-\tau\,\frac{2\,p'}{p'-2}}\,d\xi\right)^\frac{p'-2}{2\,p'}+\lambda^{-\tau}\,\|\widehat u\|_{L^2(\mathbb{R}^N)}\\
&\leq C\,\lambda^{N\,\frac{p'-2}{2\,p'}-\tau}\,\|u\|_{L^p(\mathbb{R}^N)}+\lambda^{-\tau}\,\|u\|_{L^2(\mathbb{R}^N)}.
\end{split}
\]
Observe that 
\[
N\,\frac{p'-2}{2\,p'}-\tau=N\,\left(\frac{1}{p}-\frac{1}{2}\right)-\tau>0,
\]
then by taking the minimum over $\lambda>0$, we get the desired conclusion \eqref{foul2}. 
\end{proof}
\subsection{The extension problem}
We set $\mathbb{R}^{N+1}_+:=\mathbb{R}^N\times\mathbb{R}_+$ and denote by $(x,z)$ the points in $\mathbb{R}^{N+1}_+$, i.e. $x\in \mathbb{R}^N$ and $z>0$. We now define the Sobolev space that will be exploited for our purposes.
\begin{defi}
Let $N\ge 2$ and $0<s<1$. We define the {\it weighted Sobolev space} $\mathcal{H}^{1,s}(\mathbb{R}^{N+1}_+)$ as
$$\mathcal{H}^{1,s}(\mathbb{R}^{N+1}_+)=\Big\{U:\mathbb{R}^{N+1}_+\to\mathbb{R} \, : \, Uz^{\frac{1-2s}{2}}\in L^2(\mathbb{R}^{N+1}_+) \ \mbox{ and }\  |\nabla U|\,z^{\frac{1-2s}{2}}\in L^2(\mathbb{R}^{N+1}_+)\Big\}.$$
We endow such a space with the norm
$$\|U\|_{\mathcal{H}^{1,s}(\mathbb{R}^{N+1}_+)}=\left(\iint_{\mathbb{R}^{N+1}_+} z^{1-2\,s}\,\Big(|U|^2+|\nabla U|^2\Big)\,dx\,dz\right)^\frac{1}{2}.$$
\end{defi}
We need to consider traces of functions in the previous space. The following result is a trace theorem for $\mathcal{H}^{1,s}(\mathbb{R}^{N+1}_+)$. Recall that $\partial\,\mathbb{R}^{N+1}_+=\{(x,z)\, :\, z=0\}\simeq \mathbb{R}^N$. 
\begin{lm}[Trace space]
\label{prop:space}
Let $N\ge 2$ and $0<s<1$. There exists a linear and continuous trace operator
\[
\mathrm{trace}:\mathcal{H}^{1,s}(\mathbb{R}^{N+1}_+)\to W^{s,2}(\mathbb{R}^N),
\]
which is surjective. Moreover, the closed subspace 
\[
\mathcal{H}^{1,s}_0(\mathbb{R}^{N+1}_+)=\Big\{U\in \mathcal{H}^{1,s}(\mathbb{R}^{N+1}_+)\ : \ \mathrm{trace}(U)=0\Big\},
\]
coincides with the closure of $C^\infty_0(\mathbb{R}^{N+1}_+)$ in $\mathcal{H}^{1,s}(\mathbb{R}^{N+1}_+)$.
\end{lm}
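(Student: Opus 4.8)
The plan is to prove the three assertions separately, reducing everything to one-dimensional weighted inequalities by Fourier-transforming only in the variable $x$ and exploiting that $1-2\,s\in(-1,1)$, so that $z^{1-2\,s}$ is locally integrable up to $\{z=0\}$ and belongs to the Muckenhoupt class $A_2$ on $\mathbb{R}^{N+1}$. Writing $\widehat U(\xi,z)$ for the partial Fourier transform, Plancherel gives
\[
\iint_{\mathbb{R}^{N+1}_+} z^{1-2\,s}\Big(|U|^2+|\nabla U|^2\Big)\,dx\,dz=\int_{\mathbb{R}^N}\int_0^{+\infty} z^{1-2\,s}\Big((1+|\xi|^2)\,|\widehat U(\xi,z)|^2+|\partial_z\widehat U(\xi,z)|^2\Big)\,dz\,d\xi,
\]
so $|\xi|$ plays the role of a frequency parameter. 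For the \emph{continuity} of the trace, the core ingredient is the one-dimensional weighted trace inequality: for a constant $C=C(s)>0$ (with explicit dependence on $s$) and every $v\in C^\infty_0([0,+\infty))$,
\[
|v(0)|^2\le C\int_0^{+\infty} z^{1-2\,s}\Big(|v(z)|^2+|v'(z)|^2\Big)\,dz.
\]
This follows from $v(0)=v(z)-\int_0^z v'(t)\,dt$, the bound $\int_0^z|v'(t)|\,dt\le (2\,s)^{-1/2}\,z^{s}\big(\int_0^z t^{1-2\,s}|v'|^2\,dt\big)^{1/2}$ (legitimate since $2\,s-1>-1$), squaring, multiplying by $z^{1-2\,s}$ and integrating over $z\in(0,1)$, where $\int_0^1 z^{1-2\,s}\,dz=(2-2\,s)^{-1}<+\infty$. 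Applying this to $v(z)=\widehat U(\xi,z)$ and rescaling $z\mapsto z/|\xi|$ upgrades it to $|\xi|^{2\,s}|\widehat U(\xi,0)|^2\le C\int_0^{+\infty} z^{1-2\,s}(|\xi|^2|\widehat U|^2+|\partial_z\widehat U|^2)\,dz$, while the unrescaled version controls $|\widehat U(\xi,0)|^2$; integrating in $\xi$ and using that $\int_{\mathbb{R}^N}(1+|\xi|^2)^s|\widehat u|^2\,d\xi$ is an equivalent norm on $W^{s,2}(\mathbb{R}^N)$ gives $\|\mathrm{trace}(U)\|_{W^{s,2}(\mathbb{R}^N)}\le C\,\|U\|_{\mathcal{H}^{1,s}(\mathbb{R}^{N+1}_+)}$, first for smooth $U$ where the trace is the honest restriction, then on all of $\mathcal{H}^{1,s}(\mathbb{R}^{N+1}_+)$ by density of smooth functions (guaranteed by the $A_2$ property of the weight).

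For \emph{surjectivity}, given $u\in W^{s,2}(\mathbb{R}^N)$ I would build an extension as a Fourier multiplier, $\widehat U(\xi,z)=\widehat u(\xi)\,\psi(|\xi|\,z)$, where $\psi$ is the bounded solution on $(0,+\infty)$ of the Bessel-type equation $t\,\psi''+(1-2\,s)\,\psi'-t\,\psi=0$ normalized by $\psi(0)=1$ (explicitly $\psi(t)=c_s\,t^{s}K_s(t)$, which together with $\psi'$ decays exponentially and lies in $L^2((0,+\infty);t^{1-2\,s}\,dt)$). The scaling identity then gives $\int_0^{+\infty}z^{1-2\,s}(|\xi|^2|\widehat U|^2+|\partial_z\widehat U|^2)\,dz=c_{N,s}\,|\xi|^{2\,s}|\widehat u(\xi)|^2$, hence $\iint z^{1-2\,s}|\nabla U|^2=c_{N,s}\,[u]^2_{W^{s,2}(\mathbb{R}^N)}$, and since $\psi$ is bounded, $\int_0^1 z^{1-2\,s}|\widehat U(\xi,z)|^2\,dz\le C\,|\widehat u(\xi)|^2$ (treating $|\xi|\ge1$ and $|\xi|<1$ separately). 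Multiplying $U$ by a cutoff equal to $1$ near $\{z=0\}$ and supported in $\{z<1\}$ produces an element of $\mathcal{H}^{1,s}(\mathbb{R}^{N+1}_+)$ with trace $u$ and norm $\le C\,\|u\|_{W^{s,2}(\mathbb{R}^N)}$, so $\mathrm{trace}$ is onto. (A mollification-type extension $U(x,z)=(u*\rho_z)(x)$ at scale $z$ would serve equally well and sidestep the Bessel functions.)

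Finally, for the identification $\mathcal{H}^{1,s}_0(\mathbb{R}^{N+1}_+)=\overline{C^\infty_0(\mathbb{R}^{N+1}_+)}$, the inclusion $\overline{C^\infty_0}\subseteq\mathcal{H}^{1,s}_0$ is immediate from the continuity of $\mathrm{trace}$ and the fact that it vanishes on functions supported away from $\{z=0\}$. For the converse, take $U\in\mathcal{H}^{1,s}(\mathbb{R}^{N+1}_+)$ with $\mathrm{trace}(U)=0$. After truncating in $(x,z)$ and mollifying — harmless because $z^{1-2\,s}$ is an $A_2$ weight, hence bounded above and below on every slab $\{a<z<b\}$ — it remains to approximate $U$ by functions vanishing in a neighbourhood of $\{z=0\}$. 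With $\zeta=0$ on $[0,1]$ and $\zeta=1$ on $[2,+\infty)$, set $U_\varepsilon(x,z)=\zeta(z/\varepsilon)\,U(x,z)$; then $U-U_\varepsilon$ is supported in $\{z<2\,\varepsilon\}$ and
\[
\|U-U_\varepsilon\|^2_{\mathcal{H}^{1,s}(\mathbb{R}^{N+1}_+)}\le C\iint_{\{z<2\,\varepsilon\}} z^{1-2\,s}\Big(|U|^2+|\nabla U|^2\Big)\,dx\,dz+\frac{C}{\varepsilon^2}\iint_{\{\varepsilon<z<2\,\varepsilon\}} z^{1-2\,s}\,|U|^2\,dx\,dz.
\]
The first term tends to $0$ by dominated convergence; on the support of $\zeta'(\cdot/\varepsilon)$ one has $\varepsilon^{-2}\le 4\,z^{-2}$, so the second is bounded by $C\iint_{\{\varepsilon<z<2\,\varepsilon\}} z^{-1-2\,s}|U|^2$, which goes to $0$ once one knows $\iint_{\mathbb{R}^{N+1}_+} z^{-1-2\,s}|U|^2\,dx\,dz<+\infty$. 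This finiteness is the weighted Hardy inequality
\[
\int_0^{+\infty} z^{-1-2\,s}|U(x,z)|^2\,dz\le \frac{1}{s^2}\int_0^{+\infty} z^{1-2\,s}|\partial_z U(x,z)|^2\,dz,\qquad\text{for a.e. }x,
\]
valid because $1-2\,s<1$, provided $U(x,z)\to0$ as $z\searrow0$ for a.e. $x$.

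The main obstacle is precisely that last point: turning the functional identity $\mathrm{trace}(U)=0$ into the pointwise statement $U(x,z)\to0$ as $z\searrow0$ for a.e. $x$. I would handle it by showing that for a.e. $x$ the slice $U(x,\cdot)$ is absolutely continuous on $(0,+\infty)$ with $\partial_z U(x,\cdot)\in L^1(0,1)$ — so $\lim_{z\to0^+}U(x,z)$ exists, using once more $2\,s-1>-1$ — and that this limit coincides with $\mathrm{trace}(U)(x)$ for a.e. $x$; the identification follows by approximating $U$ by smooth functions, for which it is obvious, and passing to the limit with the help of the one-dimensional trace-type estimate and the continuity of $\mathrm{trace}$. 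Everything else — the scaling manipulations, the behaviour of the one-dimensional weight near $z=0$, and the truncation/mollification steps — is routine.
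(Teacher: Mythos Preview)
Your argument is correct, and it takes a genuinely different route from the paper. The paper's proof is essentially a citation: it invokes Lions' trace theorem for weighted Sobolev spaces, which yields a surjective trace onto the ``anisotropic'' space
\[
W^{s,2}_\diamond(\mathbb{R}^N)=\left\{u\in L^2(\mathbb{R}^N)\, :\, \sum_{i=1}^N \int_{\mathbb{R}^N}\int_0^{+\infty}\frac{|u(x+\varrho\,\mathbf{e}_i)-u(x)|^2}{\varrho^{1+2\,s}}\,d\varrho\,dx<+\infty \right\},
\]
and then appeals to an appendix lemma (equivalence of the directional and isotropic Gagliardo seminorms) to identify $W^{s,2}_\diamond(\mathbb{R}^N)=W^{s,2}(\mathbb{R}^N)$. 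For the identification of $\mathcal{H}^{1,s}_0$ with the closure of $C^\infty_0$, the paper simply refers to an argument of Barros~Neto in the case $s=1/2$ and leaves the adaptation to the reader.

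Your approach, by contrast, is entirely self-contained: partial Fourier transform in $x$ reduces continuity to a one-dimensional weighted trace inequality plus a scaling in $|\xi|$; surjectivity is obtained by the explicit Caffarelli--Silvestre--type extension $\widehat U(\xi,z)=\widehat u(\xi)\,\psi(|\xi|\,z)$ (together with a cutoff in $z$ to land in the weighted $L^2$ space); and the identification of $\mathcal{H}^{1,s}_0$ is done via the Hardy inequality $\int_0^\infty z^{-1-2s}|U|^2\,dz\le s^{-2}\int_0^\infty z^{1-2s}|\partial_z U|^2\,dz$ combined with a standard vertical cutoff. You also correctly isolate the only nontrivial point in that last step --- upgrading $\mathrm{trace}(U)=0$ to $U(x,z)\to 0$ as $z\searrow 0$ for a.e.~$x$ --- and your sketch for it (a.e.~absolute continuity of slices via $\int_0^1 z^{2s-1}\,dz<\infty$, then identification with the trace by approximation) is the right one. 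The payoff of your method is that it is constructive, delivers constants with transparent $s$-dependence, and dovetails naturally with the Caffarelli--Silvestre machinery used immediately afterwards in the paper; the paper's route is shorter on the page but relies on black-box references and requires the separate seminorm-equivalence lemma.
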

\begin{proof}
By using \cite[Section 5]{Lio}, we know that there exists a linear, continuous and surjective operator 
\[
\mathrm{trace}: \mathcal{H}^{1,s}(\mathbb{R}^{N+1}_+)\to W^{s,2}_\diamond(\mathbb{R}^N),
\]
where 
\[
W^{s,2}_\diamond(\mathbb{R}^N)=\left\{u\in L^2(\mathbb{R}^N)\, :\, \sum_{i=1}^N \int_{\mathbb{R}^N}\int_0^{+\infty}\frac{|u(x+\varrho\,\mathbf{e}_i)-u(x)|^2}{\varrho^{1+2\,s}}\,d\varrho\,dx<+\infty \right\}.
\]
By using Proposition \ref{prop:equiRN}, we get for every $u\in C^\infty_0(\mathbb{R}^N)$
\[
\frac{1}{C}\,[u]_{W^{s,2}(\mathbb{R}^N)}^2\le \sum_{i=1}^N \int_{\mathbb{R}^N}\int_0^{+\infty}\frac{|u(x+\varrho\,\mathbf{e}_i)-u(x)|^2}{\varrho^{1+2\,s}}\,d\varrho\,dx\le C\,[u]_{W^{s,2}(\mathbb{R}^N)}^2.
\]
By density, this in turn implies that $W^{s,2}_\diamond(\mathbb{R}^N)=W^{s,2}(\mathbb{R}^N)$.
\par
The proof of the second statement can be done as in \cite[Theorem 5.1, point iii)]{bn}, which deals with the case $s=1/2$. We leave the details to the reader. 
\end{proof}

We now set
\begin{equation}
\label{beta}
P_1(x)=\frac{\beta_{N,s}}{(1+|x|^2)^\frac{N+2\,s}{2}},\qquad \mbox{ where }\ \beta_{N,s}=\left(\int_{\mathbb{R}^N} \frac{1}{(1+|x|^2)^\frac{N+2\,s}{2}}\,dx\right)^{-1},
\end{equation}
and for every $z>0$, we consider the rescaled function
\[
P_z(x)=\frac{1}{z^N}\,P_1\left(\frac{x}{z}\right)=\beta_{N,s}\,\frac{z^{2\,s}}{(z^2+|x|^2)^\frac{N+2\,s}{2}}.
\]
\begin{oss}[The Fourier side of $P_1$]
We observe that $P_1\in L^1(\mathbb{R}^N)\cap L^\infty(\mathbb{R}^N)$ for every $0<s<1$ and that
\[
x\,P_1\in L^\alpha(\mathbb{R}^N)\cap L^\infty(\mathbb{R}^N),\qquad \mbox{ where } \left\{\begin{array}{cc}
\alpha=1,& \mbox{ if } s>1/2,\\
&\\
\alpha>\dfrac{N}{N+2\,s-1},& \mbox{ if } 0<s\le 1/2.
\end{array}
\right.
\]
In particular, by using Lemma \ref{lm:foul2} and the properties of the Fourier transform, we get
\begin{equation}
\label{op}
\int_{\mathbb{R}^N} |\xi|^{2-N-2\,s}\,|\widehat{P}_1(\xi)|^2\,d\xi<+\infty,\qquad \mbox{ for } 0<s<1,
\end{equation}
and\footnote{For $0<s\le 1/2$, we use Lemma \ref{lm:foul2} with $p=\alpha$ given above. We observe that in this case
\[
N\,\frac{2-\alpha}{2\,\alpha}>\frac{N+2\,s-2}{2}\qquad \Longleftrightarrow\qquad \alpha<\frac{N}{N+s-1},
\]
which is feasible, by recalling the limitation on $\alpha$.}
\begin{equation}
\label{la}
\int_{\mathbb{R}^N} |\xi|^{2-N-2\,s}\,|\nabla \widehat{P}_1(\xi)|^2\,d\xi=\int_{\mathbb{R}^N} |\xi|^{2-N-2\,s}\,|\widehat{x\,P_1}|^2\,d\xi<+\infty, \qquad \mbox{ for } 0<s<1.
\end{equation}
\end{oss}
We recall that, as established in \cite{CS}, $P_z$ is the Poisson kernel for the Dirichlet problem \eqref{cafsil} below. 
Indeed, for any given $\varphi\in W^{s,2}(\mathbb{R}^N)$, let us denote by $U_\varphi$ the function on $\mathbb{R}^{N+1}_+$ defined by
\begin{equation}\label{poiss}
U_\varphi(x,z)=P_z\ast \varphi (x)=\int_{\mathbb{R}^{N}} \frac{1}{z^N}\,P_1\left(\frac{y}{z}\right)\,\varphi(x-y)\,dy,\qquad (x,z)\in\mathbb{R}^{N+1}_+.
\end{equation}
Then, $U_\varphi$ is a solution to the following boundary value problem
\begin{equation}\label{cafsil}
\left\{\begin{array}{rcll}
-\mathrm{div}(z^{1-2\,s}\,\nabla U_\varphi)&=&0&\mbox{ in }\mathbb{R}^N\times \mathbb{R}_+,\\
U_\varphi(\cdot,0)&=&\varphi&\mbox{ in } \mathbb{R}^N.
\end{array}
\right.
\end{equation}
As such, it verifies the following weak formulation
\begin{equation}
\label{weakeq}
\iint_{\mathbb{R}^{N+1}_+} z^{1-2\,s}\,\nabla U_\varphi\cdot\nabla\phi\,dx\,dz=0,\qquad\mbox{ for every } \phi\in C^\infty_0(\mathbb{R}^{N+1}_+).
\end{equation}
In what follows, with a slight abuse of notation we will denote by $\widehat U_\varphi$ the partial Fourier transform, taken with respect to the $x-$variable.
\begin{prop}\label{mini}
Let $N\ge 2$  and $0<s<1$. For every 
\[
\varphi\in W^{s,2}(\mathbb{R}^N)\cap L^{(2^*_{1-s})'}(\mathbb{R}^N),
\] 
the following variational problem
\begin{equation}
\label{min}
\min_{U\in \mathcal{H}^{1,s}(\mathbb{R}^{N+1}_+)} \left\{\iint_{\mathbb{R}^{N+1}_+} z^{1-2\,s}\,|\nabla U|^2\,dx\,dz\, :\, \mathrm{trace}(U)=\varphi \right\},
\end{equation}
admits a unique solution, which coincides with $U_\varphi$ given in \eqref{poiss}. Moreover, we have
\begin{equation}
\label{equiv}
[\varphi]_{W^{s,2}(\mathbb{R}^N)}^2=\gamma_{N,s}\iint_{\mathbb{R}^{N+1}_+}z^{1-2\,s}\,|\nabla U_\varphi|^2\, dx\,dz,
\end{equation}
and
\begin{equation}
\label{lemma2}
\|U_\varphi(\cdot,z)-\varphi\|^2_{L^2(\mathbb{R}^N)}\le \Big(\beta_{N,s}\,[\varphi]^2_{W^{s,2}(\mathbb{R}^N)}\Big)\,z^{2\,s},\qquad \mbox{ for a.\,e. }z>0.
\end{equation}
Here $\beta_{N,s}$ is the constant in \eqref{beta} and $\gamma_{N,s}$  is a positive constant whose precise value is given in Remark \ref{oss:constants}.
\end{prop}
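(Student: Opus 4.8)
The plan is to establish, in order: (i) existence and uniqueness of the minimizer via the direct method in the Hilbert space $\mathcal{H}^{1,s}(\mathbb{R}^{N+1}_+)$; (ii) identification of the minimizer with the Poisson-type extension $U_\varphi$ given in \eqref{poiss}; (iii) the energy identity \eqref{equiv}; and (iv) the quantitative $L^2$ bound \eqref{lemma2} on the ``vertical slices'' of $U_\varphi$. For step (i), I would fix the affine class $\mathcal{A}_\varphi=\{U\in\mathcal{H}^{1,s}(\mathbb{R}^{N+1}_+)\, :\, \mathrm{trace}(U)=\varphi\}$, which is nonempty and closed by the trace theorem (Lemma \ref{prop:space}), note that the Dirichlet energy $U\mapsto\iint z^{1-2s}|\nabla U|^2$ is convex, coercive modulo the affine constraint (after subtracting a fixed lifting, the problem becomes minimization over the closed subspace $\mathcal{H}^{1,s}_0(\mathbb{R}^{N+1}_+)$, on which the weighted gradient seminorm is an equivalent norm), and weakly lower semicontinuous. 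The direct method then yields a unique minimizer $U^\star$, whose Euler--Lagrange equation is exactly the weak formulation \eqref{weakeq}, i.e. $U^\star$ is weakly $z^{1-2s}$-harmonic with trace $\varphi$.

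For step (ii), I would show $U_\varphi$ as defined by the convolution \eqref{poiss} belongs to $\mathcal{H}^{1,s}(\mathbb{R}^{N+1}_+)$ and solves \eqref{weakeq}; the key computation is on the Fourier side in the $x$-variable. Writing $\widehat{U_\varphi}(\xi,z)=\widehat{P}_z(\xi)\,\widehat\varphi(\xi)$ and using that $\widehat{P}_1$ is (a multiple of) a Bessel-type kernel, one checks that $z\mapsto\widehat{U_\varphi}(\xi,z)$ solves the ODE $(z^{1-2s}v')'=z^{1-2s}|\xi|^2 v$ with $v(0)=\widehat\varphi(\xi)$ and the correct decay at infinity; this is precisely the content of the Caffarelli--Silvestre identity from \cite{CS}. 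By the uniqueness in step (i), $U^\star=U_\varphi$. The integrability hypothesis $\varphi\in L^{(2^*_{1-s})'}(\mathbb{R}^N)$ is what guarantees, via Lemma \ref{lm:foul2} applied as in \eqref{op}--\eqref{la}, that the relevant weighted integrals converge and that $U_\varphi\in\mathcal{H}^{1,s}(\mathbb{R}^{N+1}_+)$; I would spell out this membership carefully, since it is the place where the extra assumption on $\varphi$ is genuinely used.

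Step (iii), the identity \eqref{equiv}, follows by computing $\iint_{\mathbb{R}^{N+1}_+} z^{1-2s}|\nabla U_\varphi|^2\,dx\,dz$ via Plancherel in $x$: it equals $\int_{\mathbb{R}^N}\left(\int_0^\infty z^{1-2s}\big(|\partial_z\widehat{U_\varphi}|^2+|\xi|^2|\widehat{U_\varphi}|^2\big)\,dz\right)d\xi$, and evaluating the inner $z$-integral using the explicit form of $\widehat{P}_z$ produces $c_{N,s}\,|\xi|^{2s}|\widehat\varphi(\xi)|^2$ for a constant depending only on $N,s$; integrating in $\xi$ and recalling the Fourier representation $[\varphi]^2_{W^{s,2}}=c'_{N,s}\int|\xi|^{2s}|\widehat\varphi|^2\,d\xi$ gives \eqref{equiv} with $\gamma_{N,s}$ the appropriate ratio of constants (to be recorded in Remark \ref{oss:constants}). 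Finally, for \eqref{lemma2} I would again use Plancherel: $\|U_\varphi(\cdot,z)-\varphi\|_{L^2}^2=\int_{\mathbb{R}^N}|\widehat{P}_z(\xi)-1|^2|\widehat\varphi(\xi)|^2\,d\xi$, and then exploit the elementary pointwise bound $|\widehat{P}_z(\xi)-1|^2\le C\,(z|\xi|)^{2s}$ (which comes from $\widehat{P}_z(\xi)=\widehat{P}_1(z\xi)$, $\widehat{P}_1(0)=1$, and the modulus-of-continuity estimate $|\widehat{P}_1(\eta)-1|\lesssim|\eta|^{2s}$ valid for the kernel \eqref{beta}), so that the integral is controlled by $C\,z^{2s}\int|\xi|^{2s}|\widehat\varphi|^2\,d\xi\sim z^{2s}[\varphi]^2_{W^{s,2}}$; tracking the sharp constant yields the factor $\beta_{N,s}$ claimed. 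The main obstacle I anticipate is not any single estimate but rather the bookkeeping of the explicit constants ($\beta_{N,s}$, $\gamma_{N,s}$, and the constant in $|\widehat{P}_1(\eta)-1|\lesssim|\eta|^{2s}$) and, more subtly, verifying uniformity of these constants as $s\nearrow 1$, since Remark \ref{oss:limit} and the statement of Theorem \ref{teo:main} demand control of the $(1-s)$-dependence; this forces one to compute $\beta_{N,s}$ and $\gamma_{N,s}$ precisely rather than treat them as harmless dimensional constants.
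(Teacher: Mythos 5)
Your step (i) contains a genuine gap: the claim that, after subtracting a lifting, the weighted gradient seminorm is an equivalent norm on $\mathcal{H}^{1,s}_0(\mathbb{R}^{N+1}_+)$ is false, so the coercivity needed for the direct method fails. Indeed, for a fixed nontrivial $U\in C^\infty_0(\mathbb{R}^{N+1}_+)$ and $U_\lambda(x,z)=U(x/\lambda,z/\lambda)$ one has $\iint z^{1-2s}|U_\lambda|^2\,dx\,dz=\lambda^{N+2-2s}\iint z^{1-2s}|U|^2\,dx\,dz$ while $\iint z^{1-2s}|\nabla U_\lambda|^2\,dx\,dz=\lambda^{N-2s}\iint z^{1-2s}|\nabla U|^2\,dx\,dz$, so the ratio of the two norms blows up as $\lambda\to\infty$: there is no Poincar\'e-type inequality on the whole half-space, and minimizing sequences need not be bounded in $\mathcal{H}^{1,s}(\mathbb{R}^{N+1}_+)$. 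This is precisely why the paper never proves abstract existence: it shows (as you do in step (ii), via the partial Fourier transform, the bounds \eqref{op}--\eqref{la} and the hypothesis $\varphi\in L^{(2^*_{1-s})'}(\mathbb{R}^N)$) that the explicit candidate $U_\varphi$ of \eqref{poiss} belongs to $\mathcal{H}^{1,s}(\mathbb{R}^{N+1}_+)$, and then proves minimality directly by convexity: for any admissible $V$, the energy of $V$ dominates that of $U_\varphi$ plus twice the cross term $\iint z^{1-2s}\,\nabla U_\varphi\cdot\nabla(V-U_\varphi)\,dx\,dz$, which vanishes by the weak formulation \eqref{weakeq} and the density of $C^\infty_0(\mathbb{R}^{N+1}_+)$ in $\mathcal{H}^{1,s}_0(\mathbb{R}^{N+1}_+)$ from Lemma \ref{prop:space}; uniqueness then follows from strict convexity. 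Your step (ii) already contains all the ingredients, so the repair is simply to drop the direct-method step and conclude minimality and uniqueness this way.

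Two further points on constants. For \eqref{lemma2}, your Plancherel route is workable but does not obviously produce the constant $\beta_{N,s}$: squaring $|\widehat{P}_1(\eta)-1|\lesssim|\eta|^{2s}$ gives $|\eta|^{4s}$, so you must also invoke the trivial bound $|\widehat{P}_1-1|\le 2$, and then you would still have to compare the resulting constant with the normalization constant in the Fourier representation of $[\varphi]^2_{W^{s,2}(\mathbb{R}^N)}$ and verify its behavior as $s\nearrow 1$, which is needed later for Theorem \ref{teo:main}. The paper sidesteps this with a physical-space argument: Minkowski's inequality applied to $U_\varphi(\cdot,z)-\varphi=\int P_z(y)\,[\varphi(\cdot-y)-\varphi(\cdot)]\,dy$, then Cauchy--Schwarz with the weight $|y|^{N+2s}$, and the elementary estimate $\int P_1(x)^2\,|x|^{N+2s}\,dx\le\beta_{N,s}$, which yields \eqref{lemma2} with exactly the stated constant. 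Finally, for \eqref{equiv} the paper does not redo the Fourier computation but quotes \cite{CS1} for the value of $\gamma_{N,s}$; your direct computation is an acceptable equivalent.
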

\begin{proof}
We first observe that by surjectivity of the trace map given in Lemma \ref{prop:space}, the class of admissible functions in \eqref{min} is not empty. We need to show that $U_\varphi$ is in the relevant Sobolev space, i.e.
\begin{equation}
\label{dentro}
\iint_{\mathbb{R}^{N+1}_+} z^{1-2\,s}\,\left(|U_\varphi|^2+|\nabla U_\varphi|^2\right)\,dx\,dz<+\infty.
\end{equation}
In order to prove this, one can argue as in the proof in \cite{CS} and use the partial Fourier transform. 
Indeed, by \eqref{poiss} we have $\widehat U_\varphi(\xi,z)=\widehat \varphi(\xi)\,\widehat{P}_1(z\,\xi)$ and $\widehat{P}$ is a radial function, i.e.
\[
\widehat{P}_1(z\,\xi)=v(z\,|\xi|),
\] 
for a suitable function $v$. The key point is that 
\[
\mathcal{J}(v):=\int_0^{+\infty}z^{1-2s}\,\left(|v|^2+|v'|^2\right)\,dz<+\infty.
\]
Indeed, by using spherical coordinates and the radial symmetry of $\widehat{P}_1$, we have
\[
\begin{split}
\int_0^{+\infty}z^{1-2s}\,\left(|v|^2+|v'|^2\right)\,dz&=\int_0^{+\infty} z^{2-N-2\,s}\,\left(|v|^2+|v'|^2\right)\,z^{N-1}\,dz\\
&=\fint_{\mathbb{S}^{N-1}}\int_0^{+\infty}z^{2-N-2\,s}\,\left(|v|^2+|v'|^2\right)\,z^{N-1}\,dz\,d\mathcal{H}^{N-1}\\
&=\frac{1}{N\,\omega_N}\,\int_{\mathbb{R}^N} |\xi|^{2-N-2\,s}\,\left(|\widehat{P}_1(\xi)|^2+|\nabla \widehat{P}_1(\xi)|^2\right)\,d\xi,
\end{split}
\]
which is finite, thanks to \eqref{op} and \eqref{la}.
By using Plancherel's identity, we get
\[
\begin{split}
\iint_{\mathbb{R}^{N+1}_+} z^{1-2\,s}\,|U_\varphi|^2\,dx\,dz&=\iint_{\mathbb{R}^{N+1}_+} z^{1-2\,s}\, |\widehat U_\varphi(\xi)|^2\,d\xi\,dz\\
&=\int_{\mathbb{R}^N}\left(\int_{0}^{+\infty} z^{1-2\,s} |v(z\,|\xi|)|^2\,dz\right)\, |\widehat{\varphi}(\xi)|^2\,d\xi\\
&=\int_{\mathbb{R}^N}\left(\int_{0}^{+\infty} t^{1-2\,s} |v(t)|^2\,dt\right)\,|\xi|^{2\,s-2}\, |\widehat{\varphi}(\xi)|^2\,d\xi\\
&\le \mathcal{J}(v)\,\int_{\mathbb{R}^N} |\xi|^{2\,s-2}\,|\widehat{\varphi}(\xi)|^2\,d\xi.
\end{split}
\]
The last integral is finite, thanks to the fact that $\varphi\in L^{(2^*_{1-s})'}(\mathbb{R}^N)$ and recalling \eqref{dualedentro}.
\par
With a similar computation, one can show that $z^{1-2\,s}\,|\nabla U_\varphi|^2 \in L^1(\mathbb{R}^{N+1}_+)$ (see \cite[Section 3.2]{CS}).
This concludes the proof of \eqref{dentro}.
\par
We now show that $U_\varphi$ is a minimizer of our variational problem. Uniqueness then will follows from strict convexity of the functional.
By convexity of the functional, for any admissible function $V$ we have
\[
\iint_{\mathbb{R}^{N+1}_+} z^{1-2\,s}\,|\nabla V|^2\,dx\,dz\geq\iint_{\mathbb{R}^{N+1}_+} z^{1-2\,s}\,|\nabla U_\varphi|^2\,dx\,dz+2\,\iint_{\mathbb{R}^{N+1}_+} z^{1-2\,s}\,\nabla U_\varphi\cdot\nabla(V-U_\varphi)\,dx\,dz.
\]
By using that $U_\varphi-V\in \mathcal{H}^{1,s}_0(\mathbb{R}^{N+1}_+)$ and the density of $C^\infty_0(\mathbb{R}^{N+1}_+)$ in $\mathcal{H}^{1,s}_0(\mathbb{R}^{N+1}_+)$, from \eqref{weakeq} we get
\[
\iint_{\mathbb{R}^{N+1}_+} z^{1-2\,s}\,\nabla U_\varphi\cdot \nabla(V-U_\varphi)\,dx\,dz=0,
\] 
which implies the minimality of $U_\varphi$.
\vskip.2cm\noindent
For equality \eqref{equiv}, we refer to \cite[Theorem 3.1 \& Remark 3.11]{CS1}, where the precise value of the constant $\gamma_{N,s}$ is given (we recall it in Remark \ref{oss:constants} below).
\vskip.2cm\noindent
Finally, in order to prove \eqref{lemma2}, we observe that
\[
\begin{split}
[\varphi]_{W^{s,2}(\mathbb{R}^N)}^2&=\iint_{\mathbb{R}^N\times\mathbb{R}^N} \frac{|\varphi(x)-\varphi(y)|^2}{|x-y|^{N+2\,s}}\,dx\,dy=\int_{\mathbb{R}^N} \left\|\frac{\varphi(\cdot+h)-\varphi(\,\cdot\,)}{|h|^s}\right\|^2_{L^2(\mathbb{R}^N)}\,\frac{dh}{|h|^N},
\end{split}
\]
which follows with a simple change of variable.
By using Minkowski's and H\"older's inequalities, we have
\[
\begin{split}
\|U_\varphi(\cdot,z)-\varphi\|_{L^2(\mathbb{R}^N)}&=\left\|\int_{\mathbb{R}^{N}} P_z(y)\,[\varphi(\cdot-y)-\varphi(\cdot)]\,dy\right\|_{L^2(\mathbb{R}^N)}\\
&\le\int_{\mathbb{R}^N} P_z(y)\,\|\varphi(\cdot-y)-\varphi(\cdot)\|_{L^2(\mathbb{R}^N)}\,dy\\
&\le \left(\int_{\mathbb{R}^N}\left\|\frac{\varphi(\cdot-y)-\varphi(\cdot)}{|y|^s}\right\|_{L^2(\mathbb{R}^N)}^2\,\frac{dy}{|y|^N} \right)^\frac{1}{2}\,\left(\int_{\mathbb{R}^N} P_z(y)^2\,|y|^{N+2\,s}\,dy\right)^\frac{1}{2}\\
&=[\varphi]_{W^{s,2}(\mathbb{R}^N)}\,\left(\int_{\mathbb{R}^N} \frac{1}{z^{2\,N}}\,P_1\left(\frac{y}{z}\right)^2\,|y|^{N+2\,s}\,dy\right)^\frac{1}{2}.
\end{split}
\]
We now observe that
\[
\left(\int_{\mathbb{R}^N} \frac{1}{z^{2\,N}}\,P_1\left(\frac{y}{z}\right)^2\,|y|^{N+2\,s}\,dy\right)^\frac{1}{2}= z^s\,\left(\int_{\mathbb{R}^N} P_1(x)^2\,|x|^{N+2\,s}\,dx\right)^\frac{1}{2}\le z^s\,\sqrt{\beta_{N,s}},
\]
thus we get the conclusion.
\end{proof}

\begin{oss}
\label{oss:constants}
The constant $\beta_{N,s}$ (see e.g. \cite{FMM}) is given explicitly by
$$
\beta_{N,s}=\pi^{-\frac{N}{2}}\,\frac{\Gamma\left(\dfrac{N+2s}{2} \right)}{\Gamma(s)},$$
and therefore:
\begin{itemize}
\item $\beta_{N,s}$  is uniformly bounded for $s\nearrow 1$;
\vskip.2cm 
\item $\beta_{N,s}\sim s$, as $s \searrow 0$.
\end{itemize}
The value of the constant $\gamma_{N,s}$ can be found in \cite[Remark 3.11]{CS1}, and is given by
$$\gamma_{N,s}= \frac{2\,d_s}{C_{N,s}},$$
where
$$C_{N,s}=\pi^{-\frac{N}{2}}\,2^{2\,s}\,\frac{\Gamma\left(\dfrac{N+2s}{2} \right)}{\Gamma(2-s)}\, s\,(1-s)\qquad \mbox{and} \qquad d_s=2^{2\,s-1}\,\frac{\Gamma(s)}{\Gamma(1-s)}.$$
Hence, in particular, we have that: 
\begin{itemize}
\item $\gamma_{N,s}$ is uniformly bounded as $s\nearrow 1$;
\vskip.2cm
\item $\gamma_{N,s}\sim s^{-2}$, as $s\searrow 0$.
\end{itemize}
\end{oss}

\section{The fractional Faber-Krahn inequality by extension}
\label{sec:3}

As explained in the Introduction, we want to give a proof of the fractional Faber-Krahn inequality
\[
\lambda_{s,q}(\Omega)\ge \lambda_{s,q}(B),\qquad \mbox{ where } B \mbox{ is any ball such that } |B|=|\Omega|,
\]
by using symmetrization techniques in $\mathbb{R}^{N+1}_+$. The proof of Theorem \ref{teo:main} will be based on introducing quantitative elements in this proof. 
\par
The following expedient result will be useful. It asserts that in order to prove the fractional Faber-Krahn inequality (and its quantative version) for sets with finite measure, we can reduce to consider \textit{bounded} sets. The proof is quite easy, we leave it to the reader.
\begin{lm}[Reduction to bounded sets]\label{bounded-set}
Let $0<s<1$ and $1\le q<2^*_s$. For every open set $\Omega\subset\mathbb{R}^N$ with finite measure and every $R>0$, we define
\[
\Omega_R=\Omega\cap B_R(0).
\]
Then we have
\[
\lim_{R\to +\infty}\lambda_{s,q}(\Omega_R)=\lambda_{s,q}(\Omega)\qquad \mbox{ and }\qquad \lim_{R\to +\infty} \mathcal{A}(\Omega_R)=\mathcal{A}(\Omega).
\]
\end{lm}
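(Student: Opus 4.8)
The plan is to treat the two limits independently; both are soft consequences of the monotone exhaustion $\Omega_R\nearrow\Omega$ together with the elementary continuity of the two functionals under small volume perturbations. Throughout I set $\delta_R:=|\Omega|-|\Omega_R|$, which decreases to $0$ as $R\to+\infty$ by continuity of the measure from below, since $|\Omega|<+\infty$.

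\textbf{The eigenvalue.} First I would observe that $R\mapsto\lambda_{s,q}(\Omega_R)$ is nonincreasing and $\ge\lambda_{s,q}(\Omega)$: indeed $C^\infty_0(\Omega_R)\subseteq C^\infty_0(\Omega_{R'})\subseteq C^\infty_0(\Omega)$ for $R\le R'$, and by the variational characterization of Lemma \ref{lm:eigenfunction}---which extends to finite-measure sets since $C^\infty_0$ is dense in $\mathcal{D}^{s,2}_0$ and the embedding $\mathcal{D}^{s,2}_0\hookrightarrow L^q$ is continuous---enlarging the class of admissible functions can only decrease the infimum. Hence $\ell:=\lim_R\lambda_{s,q}(\Omega_R)$ exists with $\ell\ge\lambda_{s,q}(\Omega)$. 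For the converse I would fix $\varepsilon>0$, pick $u\in C^\infty_0(\Omega)$ with $\int_\Omega|u|^q\,dx=1$ and $[u]^2_{W^{s,2}(\mathbb{R}^N)}\le\lambda_{s,q}(\Omega)+\varepsilon$, and note that $K:=\operatorname{supp}u$ is a compact subset of the open set $\Omega$, hence $K\subseteq\Omega\cap B_R(0)=\Omega_R$ for $R$ large enough; then $u\in C^\infty_0(\Omega_R)$ is admissible for $\lambda_{s,q}(\Omega_R)$, so $\lambda_{s,q}(\Omega_R)\le\lambda_{s,q}(\Omega)+\varepsilon$ for all such $R$. Letting $R\to+\infty$ and then $\varepsilon\to0$ gives $\ell\le\lambda_{s,q}(\Omega)$.

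\textbf{The asymmetry.} The elementary facts I would use are: (i) on subsets of $\mathbb{R}^N$ the symmetric difference is an abelian group operation with every element its own inverse, so $(A\Delta C)\Delta(A\Delta C')=C\Delta C'$ and hence $\big||A\Delta C|-|A\Delta C'|\big|\le|C\Delta C'|$; and (ii) if $B'\subseteq B$ are concentric balls then $|B\Delta B'|=|B|-|B'|$. For the upper bound, given $\varepsilon>0$ I would take a ball $B$ with $|B|=|\Omega|$ and $|\Omega\Delta B|\le(\mathcal{A}(\Omega)+\varepsilon)|\Omega|$, and let $B_R\subseteq B$ be concentric with $|B_R|=|\Omega_R|$, so $|B\Delta B_R|=\delta_R$. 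Using (i) twice (with $C=\Omega$, $C'=\Omega_R$, then with $C=B$, $C'=B_R$) and $|\Omega\Delta\Omega_R|=\delta_R$, one gets $|\Omega_R\Delta B_R|\le|\Omega\Delta B|+2\,\delta_R\le(\mathcal{A}(\Omega)+\varepsilon)|\Omega|+2\,\delta_R$, so dividing by $|\Omega_R|=|\Omega|-\delta_R$ and passing to the limit yields $\limsup_R\mathcal{A}(\Omega_R)\le\mathcal{A}(\Omega)$. For the lower bound I would run the computation in reverse: for each $R$ take a ball $B_R$ with $|B_R|=|\Omega_R|$ and $|\Omega_R\Delta B_R|\le(\mathcal{A}(\Omega_R)+\varepsilon)|\Omega_R|$, enlarge it to the concentric ball $\widetilde B_R\supseteq B_R$ with $|\widetilde B_R|=|\Omega|$, and estimate, via (i) and (ii) again,
\[
\mathcal{A}(\Omega)\le\frac{|\Omega\Delta\widetilde B_R|}{|\Omega|}\le\frac{|\Omega_R\Delta B_R|+2\,\delta_R}{|\Omega|}\le\mathcal{A}(\Omega_R)+\varepsilon+\frac{2\,\delta_R}{|\Omega|},
\]
whence $\mathcal{A}(\Omega)\le\liminf_R\mathcal{A}(\Omega_R)$. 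Combining the two bounds gives $\mathcal{A}(\Omega_R)\to\mathcal{A}(\Omega)$.

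\textbf{Where care is needed.} There is no genuine obstacle here---this is why the proof is left to the reader. The only subtlety is that an (almost) optimal ball for one of the sets has the ``wrong'' volume for the asymmetry functional of the other; this is what forces the concentric-rescaling device, at the negligible cost $\delta_R$. I would also emphasise that no compactness for the competitor balls is required: all the estimates above are uniform in the position of the balls, so the fact that near-optimal balls might a priori escape to infinity is immaterial. (Alternatively, one could invoke that the infimum defining $\mathcal{A}$ is attained for finite-measure sets, but the $\varepsilon$-argument avoids even that.)
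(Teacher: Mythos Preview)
Your proof is correct; the paper itself omits the argument entirely (``The proof is quite easy, we leave it to the reader''), so there is no comparison to make. Your treatment---monotone exhaustion via compactly supported test functions for $\lambda_{s,q}$, and the triangle inequality for symmetric difference together with concentric rescaling of near-optimal balls for $\mathcal{A}$---is exactly the standard route one expects here.
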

As in \cite{FMM}, we define in $\mathbb{R}^{N+1}_+$ the {\it partial Schwarz symmetrization} $U^*$ of a nonnegative function $U\in \mathcal{H}^{1,s}(\mathbb{R}^{N+1}_+)$. By construction, the function $U^*$ is obtained by taking for almost every $z>0$, the $N-$dimensional Schwarz symmetrization of 
\[
x\mapsto U(x,z).
\]
More precisely: for almost every fixed $z>0$, the function $U^*(\cdot, z)$ is defined to be the unique radially symmetric decreasing function on $\mathbb R^N$ such that for all $t>0$
$$
|\{x\in\mathbb{R}^N\, :\, U^*(x, z)>t\}|=|\{x\in\mathbb{R}^N\, :\, U(x, z)>t\}|.
$$
\begin{prop}
Let $N\ge 2$ and $0<s<1$. Let $\varphi\in W^{s,2}(\mathbb{R}^N)\cap L^{(2^*_{1-s})'}(\mathbb{R}^N)$ be a nonnegative function. By taking $U_\varphi\in\mathcal{H}^{1,s}(\mathbb{R}^{N+1}_+)$ to be the minimizer of \eqref{min}, we have that 
\[
U^*_\varphi\in \mathcal{H}^{1,s}(\mathbb{R}^{N+1}_+),
\]
and the following P\'olya-Szeg\H{o} inequality holds
\begin{equation}\label{poliaszego}
\iint_{\mathbb{R}^{N+1}_+} z^{1-2\,s}\,|\nabla U_\varphi^*|^2\,dx\,dz\leq\iint_{\mathbb{R}^{N+1}_+} z^{1-2\,s}\,|\nabla U_\varphi|^2\,dx\,dz.
\end{equation}
Moreover, we have $\mathrm{trace}(U_\varphi^*)=\varphi^*$.
In particular, we get
\begin{equation}
\label{energia}
\gamma_{N,s}\,\iint_{\mathbb{R}^{N+1}_+} z^{1-2\,s}\,|\nabla U_\varphi^*|^2\,dx\,dz\ge [\varphi^*]_{W^{s,2}(\mathbb{R}^N)}^2.
\end{equation}
\end{prop}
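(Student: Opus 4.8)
The plan is to establish the three assertions in turn: membership $U^*_\varphi\in\mathcal{H}^{1,s}(\mathbb{R}^{N+1}_+)$, the Pólya--Szeg\H{o} inequality \eqref{poliaszego}, and the trace identity $\mathrm{trace}(U^*_\varphi)=\varphi^*$. The natural route, following \cite{FMM}, is to apply the classical (weighted) Pólya--Szeg\H{o} principle slicewise in $z$ to the horizontal gradient, and a Cavalieri-type argument in $z$ for the vertical derivative. First I would recall that for a.e.\ fixed $z>0$, the function $x\mapsto U_\varphi(x,z)$ lies in $W^{1,2}(\mathbb{R}^N)$ (this follows from $z^{1-2s}|\nabla_x U_\varphi|^2\in L^1$ and Fubini, for a.e.\ $z$), hence the $N$-dimensional Schwarz symmetrization is well defined and the standard Pólya--Szeg\H{o} inequality gives
\[
\int_{\mathbb{R}^N}|\nabla_x U^*_\varphi(x,z)|^2\,dx\le \int_{\mathbb{R}^N}|\nabla_x U_\varphi(x,z)|^2\,dx,\qquad\text{for a.e. }z>0.
\]
Multiplying by $z^{1-2s}$ and integrating in $z$ yields control of the horizontal part, which is exactly \eqref{2} from the Introduction.

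For the vertical derivative, the key point is that, for a.e.\ fixed $z$, the map $x\mapsto U_\varphi(x,z)$ and its symmetrization are equimeasurable, so all $L^p$ norms are preserved; differentiating in $z$ the relation $\int |U^*_\varphi(\cdot,z)|^p=\int|U_\varphi(\cdot,z)|^p$ is too crude, so instead I would use the more robust fact (as in \cite{FMM} and the classical literature on partial symmetrization, e.g.\ going back to Sperner/Talenti) that $\partial_z$ of the distribution function is controlled: writing $\mu(t,z)=|\{U_\varphi(\cdot,z)>t\}|$, one has $\mu(t,z)=|\{U^*_\varphi(\cdot,z)>t\}|$ by construction, and from this the layer-cake representation gives
\[
\int_{\mathbb{R}^N}|\partial_z U^*_\varphi(x,z)|^2\,dx\le\int_{\mathbb{R}^N}|\partial_z U_\varphi(x,z)|^2\,dx,\qquad\text{for a.e. }z>0,
\]
again multiplied by $z^{1-2s}$ and integrated; this is \eqref{2bis}. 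Summing the two contributions gives \eqref{poliaszego}, and in particular $z^{1-2s}(|U^*_\varphi|^2+|\nabla U^*_\varphi|^2)\in L^1(\mathbb{R}^{N+1}_+)$ once one also notes $\|U^*_\varphi(\cdot,z)\|_{L^2}=\|U_\varphi(\cdot,z)\|_{L^2}$, so that $U^*_\varphi\in\mathcal{H}^{1,s}(\mathbb{R}^{N+1}_+)$.

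For the trace identity, I would argue that $\mathrm{trace}(U^*_\varphi)$ is obtained as an $L^2_{\mathrm{loc}}$ (or $W^{s,2}$) limit of $U^*_\varphi(\cdot,z)$ as $z\searrow 0$, and that symmetrization commutes with this limit: since $\mathrm{trace}(U_\varphi)=\varphi$ and, by \eqref{lemma2}, $U_\varphi(\cdot,z)\to\varphi$ in $L^2(\mathbb{R}^N)$, the contractivity of Schwarz symmetrization in $L^2$, namely $\|U^*_\varphi(\cdot,z)-\varphi^*\|_{L^2}\le\|U_\varphi(\cdot,z)-\varphi\|_{L^2}$, forces $U^*_\varphi(\cdot,z)\to\varphi^*$ in $L^2$, whence $\mathrm{trace}(U^*_\varphi)=\varphi^*$. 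The last display \eqref{energia} is then immediate: $U^*_\varphi$ is admissible in \eqref{min} with datum $\varphi^*$, so its Dirichlet energy dominates that of the minimizer $U_{\varphi^*}$, which by \eqref{equiv} equals $\gamma_{N,s}^{-1}[\varphi^*]^2_{W^{s,2}(\mathbb{R}^N)}$.

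The main obstacle I anticipate is the rigorous justification of the slicewise vertical estimate \eqref{2bis}: unlike the horizontal Pólya--Szeg\H{o} inequality, which is entirely standard, controlling $\|\partial_z U^*_\varphi(\cdot,z)\|_{L^2}$ requires knowing that $z\mapsto U^*_\varphi(\cdot,z)$ inherits the right differentiability and that the symmetrization of a function smooth in the extra parameter is still (weakly) differentiable in that parameter with the claimed norm bound; this is a technical point handled carefully in \cite{FMM}, and I would invoke their argument rather than reprove it, while checking that the weight $z^{1-2s}$ causes no additional difficulty since it depends only on $z$ and is therefore unaffected by symmetrization in $x$.
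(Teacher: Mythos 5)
Your treatment of the horizontal gradient, of the trace identity (via \eqref{lemma2} and the $L^2$-nonexpansiveness of Schwarz symmetrization, \cite[Theorem 3.5]{LL}), and of \eqref{energia} (minimality of $U_{\varphi^*}$ plus \eqref{equiv}) coincides with the paper's argument. The genuine gap is in the vertical estimate \eqref{2bis}, i.e. \eqref{polyaz}. You assert the slicewise inequality $\int_{\mathbb{R}^N}|\partial_z U^*_\varphi(x,z)|^2\,dx\le\int_{\mathbb{R}^N}|\partial_z U_\varphi(x,z)|^2\,dx$ for a.e. $z$, justified by equimeasurability of $U_\varphi(\cdot,z)$ and $U^*_\varphi(\cdot,z)$ and a ``layer-cake'' argument; this is not a proof. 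Equality of the distribution functions $\mu(t,z)$ says nothing about $\partial_z U^*_\varphi$ in $L^2$, and \cite[Lemma 2.6]{FMM} does not establish a pointwise-in-$z$ bound either: it proves only the integrated inequality, by iterated Steiner symmetrizations in the $x$-variables combined with Brock's weighted P\'olya--Szeg\H{o} inequality \cite[Theorem 1]{BRO}. A slicewise statement could in principle be rescued by applying the $L^2$-contraction property of symmetrization to the difference quotients $h^{-1}\bigl(U_\varphi(\cdot,z+h)-U_\varphi(\cdot,z)\bigr)$ and identifying the limit with $\partial_z U^*_\varphi$, but that requires proving that $z\mapsto U^*_\varphi(\cdot,z)$ is weakly differentiable into $L^2$ with the correct derivative, none of which appears in your sketch.

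Moreover, your closing remark that the weight $z^{1-2s}$ ``causes no additional difficulty since it depends only on $z$'' dismisses exactly the point where care is needed. In \cite{FMM} the extension weight has a positive exponent, hence is bounded near $\{z=0\}$, so their Lemma 2.6 can be invoked verbatim only when $1-2s>0$, i.e. $0<s<1/2$. For $1/2\le s<1$ the weight is singular on the boundary and the paper must work for it: it replaces $z^{1-2s}$ by the regularized weight $\rho_\varepsilon(z)=(\varepsilon^2+z^2)^{\frac{1-2s}{2}}$, reflects $U_\varphi$ evenly in $z$, runs the Steiner-symmetrization argument of \cite{FMM} with Brock's inequality for this nonsingular weight, and finally lets $\varepsilon\to 0$ using Fatou's lemma, exploiting that the symmetrized reflected function is still even in $z$. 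Without this regularization step (or a correctly justified slicewise argument as above), your proof of \eqref{poliaszego} covers only the range $0<s<1/2$.
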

\begin{proof}
For the first statement, we follow the ideas contained in \cite[Lemma 2.6]{FMM}. First, it is easy to see that
\[
\iint_{\mathbb{R}^{N+1}_+} z^{1-2\,s}\,|U_\varphi^*|^2\,dx\,dz=\iint_{\mathbb{R}^{N+1}_+} z^{1-2\,s}\,|U_\varphi|^2\,dx\,dz,
\]
since $U_\varphi(\cdot,z)$ and $U^*_\varphi(\cdot,z)$ are equi-measurable.
Moreover, by the classical P\'olya-Szeg\H{o} inequality for the Schwarz symmetrization of $x\mapsto U(x,z)$ in $\mathbb{R}^N$ for a.e. $z>0$ fixed, one has also
\[
\iint_{\mathbb{R}^{N+1}_+} z^{1-2\,s}\,|\nabla_x U_\varphi^*|^2\,dx\,dz\leq\iint_{\mathbb{R}^{N+1}_+} z^{1-2\,s}\,|\nabla_x U_\varphi|^2\,dx\,dz.
\]
We only have to prove that
\begin{equation}\label{polyaz}
\iint_{\mathbb{R}^{N+1}_+} z^{1-2\,s}\,\left|\partial_z U_\varphi^*\right|^2\,dx\,dz\leq\iint_{\mathbb{R}^{N+1}_+} z^{1-2\,s}\,\left|\partial_z U_\varphi\right|^2\,dx\,dz.
\end{equation}
Since our space $\mathcal{H}^{1,s}(\mathbb{R}^{N+1}_+)$ is contained in the functional space used in \cite{FMM}, then the result for $0<s<1/2$ follows immediately from \cite[Lemma 2.6]{FMM}. 
\par
For the case $1/2\le s<1$ some care is needed, due to the singularity of our weight. In this case, we define the regularized weight  
$$
\rho_\varepsilon(z)=(\varepsilon^2+z^2)^\frac{1-2\,s}{2},\qquad\varepsilon>0,\, z\in \mathbb{R},
$$
and set
\[
\widetilde{U}_\varphi(x,z)=\left\{\begin{array}{rl}
U_\varphi(x,z),\qquad \mbox{ for } z\ge 0,\\
U_\varphi(x,-z),\qquad \mbox{ for } z<0.
\end{array}
\right.
\]
By construction, we have
\[
\iint_{\mathbb{R}^{N+1}_+} z^{1-2\,s}\,|\partial_z U_\varphi|^2\,dz\ge \iint_{\mathbb{R}^{N+1}_+} \rho_\varepsilon(z)\,|\partial_z U_\varphi|^2\,dz=\frac{1}{2}\,\iint_{\mathbb{R}^{N+1}} \rho_\varepsilon(z)\,|\partial_z \widetilde U_\varphi|^2\,dz.
\]
We can now reproduce step by step the proof of \cite[Lemma 2.6]{FMM}. This is based on an iterative use of Steiner symmetrizations in the $x-$space, in conjuction with the weighted P\'olya-Szeg\H{o} inequality of \cite[Theorem 1]{BRO}. This shows that
\[
\iint_{\mathbb{R}^{N+1}} \rho_\varepsilon(z)\,\left|\partial_z (\widetilde U_\varphi)^*\right|^2\,dx\,dz\leq\iint_{\mathbb{R}^{N+1}} \rho_\varepsilon(z)\,|\partial_z \widetilde U_\varphi|^2\,dz.
\]
It is only left to observe that $(\widetilde U_\varphi)^*$ is still even in the $z-$variable, thus we obtain
\[
\iint_{\mathbb{R}^{N+1}_+} z^{1-2\,s}\,|\partial_z U_\varphi|^2\,dz\ge \iint_{\mathbb{R}^{N+1}_+} \rho_\varepsilon(z)\,|\partial_z U_\varphi^*|^2\,dz.
\]
By taking the limit as $\varepsilon$ goes to $0$ and using Fatou's Lemma, we get \eqref{polyaz}.
\par
Once we obtained that $U^*_\varphi$ is in the relevant Sobolev space, the fact that
\[
\mathrm{trace\,}(U^*_\varphi)\in W^{s,2}(\mathbb{R}^N),
\]
follows from Lemma \ref{prop:space}.
\par
In order to identify the trace of $U_\varphi^*$, we use the estimate \eqref{lemma2} and the fact that the Schwarz symmetrization is non-expansive in $L^2(\mathbb{R}^N)$ (see \cite[Theorem 3.5]{LL}). This entails
\[
\|U_\varphi^*(\cdot,z)-\varphi^*\|^2_{L^2(\mathbb{R}^N)}\le \|U_\varphi(\cdot,z)-\varphi\|^2_{L^2(\mathbb{R}^N)}\le \beta_{N,s}\,[\varphi]^2_{W^{s,2}(\mathbb{R}^N)}\,z^{2\,s},\qquad \mbox{ for } z>0,
\]
which shows that
\[
U^*_\varphi (\cdot,z)\to \varphi^*\quad \mbox{ in } L^2(\mathbb{R}^N), \mbox{ as } z\to 0^+.
\] 
This permits to conclude that $\mathrm{trace}(U^*_\varphi)=\varphi^*$. 
\par
Finally, let us prove \eqref{energia}. We take $U_{\varphi^*}\in \mathcal{H}^{1,s}(\mathbb{R}^{N+1}_+)$ to be the minimizer of \eqref{min} with boundary datum $\varphi^*$. It is now sufficient to use the minimality of $U_{\varphi^*}$ and the fact that
\[
\mathrm{trace}(U^*_\varphi)=\mathrm{trace}(U_{\varphi^*})=\varphi^*,
\]
to get that
\[
\iint_{\mathbb{R}^{N+1}_+} z^{1-2\,s}\,|\nabla U_\varphi^*|^2\,dx\,dz\ge \iint_{\mathbb{R}^{N+1}_+} z^{1-2\,s}\,|\nabla U_{\varphi^*}|^2\,dx\,dz.
\]
By recalling \eqref{equiv}, we eventually get the conclusion. 
\end{proof}

\begin{teo}[Faber-Krahn inequality]
\label{teo:FK}
Let $N\ge 2$, $0<s<1$ and $1\le q<2^*_s$. For every $\Omega\subset\mathbb{R}^N$ open set with finite measure, we have
\[
|\Omega|^{\frac{2}{q}-1+\frac{2\,s}{N}}\,\lambda_{s,q}(\Omega)\ge |B|^{\frac{2}{q}-1+\frac{2\,s}{N}}\,\lambda_{s,q}(B),
\]
where $B\subset\mathbb{R}^N$ is any $N-$dimensional ball.
\end{teo}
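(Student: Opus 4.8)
The plan is to turn the sketch from the Introduction into a rigorous argument, using the extension machinery set up in Section~\ref{sec:2}; all the genuinely analytic work is already contained in Proposition~\ref{mini} and in the P\'olya--Szeg\H{o} Proposition (inequalities \eqref{poliaszego}--\eqref{energia}), so what remains is to assemble these pieces correctly.

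\emph{Step 1: reductions.} First I would note that the quantity $|\Omega|^{\frac{2}{q}-1+\frac{2\,s}{N}}\,\lambda_{s,q}(\Omega)$ is invariant under dilations $\Omega\mapsto t\,\Omega$: testing the Rayleigh quotient with $u(\,\cdot\,/t)$ gives $\lambda_{s,q}(t\,\Omega)=t^{N-2\,s-2\,N/q}\,\lambda_{s,q}(\Omega)$, which exactly cancels the scaling $t^{2\,N/q-N+2\,s}$ of the prefactor. Hence it suffices to prove $\lambda_{s,q}(\Omega)\ge\lambda_{s,q}(B_\Omega)$, where $B_\Omega$ is the ball centred at the origin with $|B_\Omega|=|\Omega|$. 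Moreover, by Lemma~\ref{bounded-set} we may assume $\Omega$ bounded: applying the inequality to $\Omega_R=\Omega\cap B_R(0)$ and letting $R\to+\infty$ recovers the general case.

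\emph{Step 2: extension and symmetrization.} Let $u=u_\Omega$ be the nonnegative first eigenfunction given by Lemma~\ref{lm:eigenfunction}; extended by $0$ outside $\Omega$ it lies in $W^{s,2}(\mathbb{R}^N)$, and being bounded with compact support it also lies in $L^{(2^*_{1-s})'}(\mathbb{R}^N)$, so Proposition~\ref{mini} applies. Writing $U_u\in\mathcal{H}^{1,s}(\mathbb{R}^{N+1}_+)$ for its Caffarelli--Silvestre extension, \eqref{equiv} yields
\[
\lambda_{s,q}(\Omega)=[u]^2_{W^{s,2}(\mathbb{R}^N)}=\gamma_{N,s}\iint_{\mathbb{R}^{N+1}_+} z^{1-2\,s}\,|\nabla U_u|^2\,dx\,dz.
\]
Passing to the partial Schwarz symmetrization $U_u^*$, the weighted Dirichlet energy does not increase by \eqref{poliaszego}, while by \eqref{energia} that energy dominates $[u^*]^2_{W^{s,2}(\mathbb{R}^N)}$; chaining these with the identity above gives
\[
\lambda_{s,q}(\Omega)\ \ge\ \gamma_{N,s}\iint_{\mathbb{R}^{N+1}_+} z^{1-2\,s}\,|\nabla U_u^*|^2\,dx\,dz\ \ge\ [u^*]^2_{W^{s,2}(\mathbb{R}^N)}.
\]

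\emph{Step 3: conclusion.} Since $u>0$ in $\Omega$, the set $\{u^*>0\}$ equals (up to a null set) the open ball $B_\Omega$, so $u^*$ is a nonnegative element of $W^{s,2}(\mathbb{R}^N)$ vanishing outside $B_\Omega$, hence an admissible competitor for $\lambda_{s,q}(B_\Omega)$; equimeasurability of $u$ and $u^*$ gives $\|u^*\|_{L^q(B_\Omega)}=\|u\|_{L^q(\Omega)}=1$, so $[u^*]^2_{W^{s,2}(\mathbb{R}^N)}\ge\lambda_{s,q}(B_\Omega)$. Combined with Step~2 this gives $\lambda_{s,q}(\Omega)\ge\lambda_{s,q}(B_\Omega)$, and Step~1 then delivers the stated inequality for an arbitrary ball $B$. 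The only points that require genuine care are bookkeeping: checking that the extended eigenfunction meets the integrability hypothesis of Proposition~\ref{mini}, and — what I expect to be the most delicate step — justifying that $u^*$, supported in $\overline{B_\Omega}$, truly belongs to $\mathcal{D}^{s,2}_0(B_\Omega)$ (this is the "nonlocal boundary values" subtlety flagged in the Introduction, and is handled through the density characterization of $\mathcal{D}^{s,2}_0$ on the ball together with the fact that $u^*$ vanishes on the \emph{open} complement of $B_\Omega$).
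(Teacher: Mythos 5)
Your proposal is correct and follows essentially the same route as the paper's own proof: reduction to bounded sets via Lemma \ref{bounded-set} and scaling, extension of the optimal function $u_\Omega$ via Proposition \ref{mini}, the partial P\'olya--Szeg\H{o} inequality \eqref{poliaszego} combined with \eqref{equiv} and \eqref{energia}, and finally admissibility of $u_\Omega^*$ for the ball. The one step you flag as delicate (that $u_\Omega^*\in\mathcal{D}^{s,2}_0(B_\Omega)$) is resolved in the paper exactly as you anticipate, by invoking a known characterization (cited there as \cite[Proposition B.1]{BPS}) for $W^{s,2}$ functions vanishing a.e.\ outside the ball.
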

\begin{proof}
By Lemma \ref{bounded-set}, it is enough to prove the result under the further assumption that $\Omega$ is bounded.
By scale invariance of the Faber-Krahn inequality, we can assume without loss of generality that $|\Omega|=1$. We now take $u_\Omega\in\mathcal{D}^{s,2}_0(\Omega)$ positive such that
\[
\|u_\Omega\|_{L^q(\Omega)}=1\qquad \mbox{ and }\qquad [u_\Omega]_{W^{s,2}(\mathbb{R}^N)}^2=\lambda_{s,q}(\Omega). 
\]
For ease of notation, we denote by $U_\Omega$ the extension of $u_\Omega$, obtained by the convolution in \eqref{poiss}. Observe that for an open bounded set $\Omega\subset\mathbb{R}^N$, we have
\[
\mathcal{D}^{s,2}_0(\Omega)\subset W^{s,2}(\mathbb{R}^N)\cap L^{(2^*_{1-s})'}(\mathbb{R}^N),
\]
thus by Proposition \ref{mini}, we know that $U_\Omega\in \mathcal{H}^{1,s}(\mathbb{R}^{N+1}_+)$. Moreover, recalling \eqref{equiv} and using the generalized P\'olya-Szeg\H{o} principle \eqref{poliaszego}, we get
\begin{equation}
\label{PS-FMM}
\begin{split}
\lambda_{s,q}(\Omega)=[u_\Omega]_{W^{s,2}(\mathbb{R}^N)}^2&=\gamma_{N,s}\,\iint_{\mathbb{R}^{N+1}_+} z^{1-2\,s}\,|\nabla U_\Omega|^2\,dx\,dz\ge \gamma_{N,s}\,\iint_{\mathbb{R}^{N+1}_+} z^{1-2\,s}\,|\nabla U_\Omega^*|^2\,dx\,dz.
\end{split}
\end{equation}
By further using \eqref{energia}, we get
\[
\lambda_{s,q}(\Omega)\ge [u^*_\Omega]_{W^{s,2}(\mathbb{R}^N)}^2.
\]
Since $u^*\in W^{s,2}(\mathbb{R}^N)$ and $u^*_\Omega=0$ almost everywhere in $\mathbb{R}^N\setminus B_\Omega$, by \cite[Proposition B.1]{BPS} we get that $u^*_\Omega\in \mathcal{D}^{s,2}_0(B_\Omega)$. We recall that $B_\Omega$ denotes the ball centered at the origin and such that $|B_\Omega|=|\Omega|$.
\par
Moreover, by construction,
\[
\|u^*_\Omega\|_{L^q(B_\Omega)}=\|u_\Omega\|_{L^q(\Omega)}=1.
\]
Thus we get
\[
\lambda_{s,q}(\Omega)\ge [u^*_\Omega]_{W^{s,2}(\mathbb{R}^N)}^2\ge \lambda_{s,q}(B_\Omega),
\]
as desired. 
\end{proof}

\section{Estimates on level sets}
\label{sec:4}

\subsection{An expedient estimate}

The following technical result will be useful in order to transfer the asymmetry from a set to another. 
This is a generalization of \cite[Lemma 2.8]{BD}.
\begin{lm}[Transfer of asymmetry]
\label{lm:asimmetria}
Let $\Omega, E\subset\mathbb{R}^N$ be two measurable sets with finite measure, such that
\[
\frac{|\Omega\Delta E|}{|\Omega|}\le \gamma\,\mathcal{A}(\Omega),
\]
for some $0<\gamma<1/2$.
Then
\[
\mathcal{A}(E)\ge \frac{1-2\,\gamma}{c_\gamma}\,\mathcal{A}(\Omega),\qquad \mbox{ where } c_\gamma=\left\{\begin{array}{rl}
1,& \mbox{ if } |E\setminus \Omega|=0,\\
1+2\,\gamma,& \mbox{ if } |E\setminus \Omega|>0. 
\end{array}
\right.
\]
\end{lm}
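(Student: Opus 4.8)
The plan is to bound the Fraenkel asymmetry $\mathcal{A}(E)$ from below by comparing $E$ to the optimal ball for $\Omega$. Let $B$ be a ball with $|B|=|\Omega|$ realizing $\mathcal{A}(\Omega)$, so that $|\Omega\Delta B|=|\Omega|\,\mathcal{A}(\Omega)$. We will estimate $|E\Delta B'|$ for a suitable ball $B'$ with $|B'|=|E|$ (obtained by a small rescaling of $B$), using the triangle-type inequality for symmetric differences $|E\Delta B'|\le |E\Delta B|+|B\Delta B'|$ together with $|E\Delta B|\le |E\Delta\Omega|+|\Omega\Delta B|$. The hypothesis controls $|E\Delta\Omega|\le\gamma\,|\Omega|\,\mathcal{A}(\Omega)$, and this same hypothesis forces $|E|$ to be close to $|\Omega|$, since $\big||E|-|\Omega|\big|\le|E\Delta\Omega|\le\gamma\,|\Omega|\,\mathcal{A}(\Omega)$.

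First I would split into the two cases in the definition of $c_\gamma$. If $|E\setminus\Omega|=0$, then $E\subseteq\Omega$ up to null sets, hence $|E|\le|\Omega|$; take $B'\subseteq B$ the concentric ball with $|B'|=|E|$. Then $|B\Delta B'|=|B|-|B'|=|\Omega|-|E|=|\Omega\setminus E|=|E\Delta\Omega|\le\gamma\,|\Omega|\,\mathcal{A}(\Omega)$. Combining the estimates, $|E\Delta B'|\le |E\Delta\Omega|+|\Omega\Delta B|+|B\Delta B'|\le 2\gamma\,|\Omega|\,\mathcal{A}(\Omega)+|\Omega|\,\mathcal{A}(\Omega)=(1+2\gamma)\,|\Omega|\,\mathcal{A}(\Omega)$. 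Wait — this gives $c_\gamma=1+2\gamma$, not $1$; so in this case I should instead bound more carefully: when $E\subseteq\Omega$ we have $|\Omega\setminus E|=|\Omega|-|E|$ and also $|\Omega\Delta E|=|\Omega\setminus E|$, and crucially $\Omega\setminus B\supseteq (\Omega\setminus E)\setminus B$ is not the right split; the gain comes from noting that $B'$ can be chosen so that $\Omega\setminus B'\supseteq\Omega\setminus B$ and then $|E\Delta B'|\le|E\Delta B|$ because shrinking $B$ to $B'$ only removes points of $B\setminus E\subseteq B$, i.e. $|E\Delta B'|=|E\setminus B'|+|B'\setminus E|\le |E\setminus B|+|B\setminus E|=|E\Delta B|\le|E\Delta\Omega|+|\Omega\Delta B|\le(\gamma+1)\,|\Omega|\,\mathcal{A}(\Omega)$; the sharper constant $c_\gamma=1$ should come from observing that in this nested case $|E\Delta\Omega|=|\Omega\setminus E|$ and the mass $|\Omega\setminus E|$ is "already counted" inside $|\Omega\Delta B|$ only partially, so one optimizes the choice of $B'$ to absorb it — I will follow the computation in \cite[Lemma 2.8]{BD} verbatim here. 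In the general case $|E\setminus\Omega|>0$, choose $B'$ concentric with $B$ and $|B'|=|E|$; then $|B\Delta B'|=\big||E|-|\Omega|\big|\le\gamma\,|\Omega|\,\mathcal{A}(\Omega)$, and $|E\Delta B'|\le|E\Delta\Omega|+|\Omega\Delta B|+|B\Delta B'|\le(1+2\gamma)\,|\Omega|\,\mathcal{A}(\Omega)$.

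Finally, since $|B'|=|E|$, the ball $B'$ is admissible in the definition of $\mathcal{A}(E)$, so
\[
\mathcal{A}(E)\le\frac{|E\Delta B'|}{|E|}\le\frac{c_\gamma\,|\Omega|\,\mathcal{A}(\Omega)}{|E|}.
\]
To convert this into a lower bound on $\mathcal{A}(E)$ one argues in the reverse direction: apply the same triangle inequalities starting from the optimal ball $\widetilde B$ for $E$ (with $|\widetilde B|=|E|$) to bound $|\Omega\Delta\widetilde B''|$ for a rescaled ball $\widetilde B''$ with $|\widetilde B''|=|\Omega|$, obtaining $\mathcal{A}(\Omega)\le\dfrac{|\Omega\Delta\widetilde B''|}{|\Omega|}\le\dfrac{c_\gamma\,|E|\,\mathcal{A}(E)+(\text{cross terms})}{|\Omega|}$, and then use $|E|\le|\Omega|(1+\gamma\mathcal{A}(\Omega))$ and the hypothesis $\gamma<1/2$ together with $\mathcal{A}(\Omega)\le 2$ to clean up, yielding $\mathcal{A}(\Omega)\le\dfrac{c_\gamma}{1-2\gamma}\,\mathcal{A}(E)$, which rearranges to the claim.

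\textbf{Main obstacle.} The delicate point is obtaining the \emph{sharp} constant $c_\gamma=1$ in the nested case $|E\setminus\Omega|=0$ rather than the crude $1+2\gamma$: this requires choosing the rescaled ball $B'$ so that the mass lost in passing from $B$ to $B'$ is drawn precisely from the region $B\setminus E$ already accounted for in $|\Omega\Delta B|$, and not double-counting. This is exactly the bookkeeping carried out in \cite[Lemma 2.8]{BD}, and the present statement is a direct generalization of it, so I would adapt that argument; the only genuinely new ingredient is tracking how the measure discrepancy $\big||E|-|\Omega|\big|$ (which vanishes in the setting of \cite{BD} where the sets have equal measure) enters, and this is harmless since it is itself controlled by $\gamma\,|\Omega|\,\mathcal{A}(\Omega)$.
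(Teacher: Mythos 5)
Your final paragraph does land on the paper's actual argument, but as written the proposal has two real problems. First, the bulk of your write-up (the first two paragraphs) runs in the useless direction: choosing a ball $B'$ with $|B'|=|E|$ and estimating $|E\Delta B'|$ from above only yields an upper bound $\mathcal{A}(E)\le \dots$, which says nothing about the claimed lower bound; all the content is therefore in your last paragraph, and there the decisive step is left as ``(cross terms)'' and ``clean up'', with the intermediate display misstated (the factor $c_\gamma$ has no business multiplying $|E|\,\mathcal{A}(E)$ at that stage). The correct computation, which is exactly the one in the paper, is: let $B$ be an optimal ball for $E$, so $|B|=|E|$ and $|E\Delta B|=|E|\,\mathcal{A}(E)$, let $B'$ be the concentric ball with $|B'|=|\Omega|$, and estimate
\[
|\Omega|\,\mathcal{A}(\Omega)\le |\Omega\Delta B'|\le |\Omega\Delta E|+|E\Delta B|+|B\Delta B'|,
\qquad |B\Delta B'|=\big||E|-|\Omega|\big|\le |\Omega\Delta E|\le \gamma\,|\Omega|\,\mathcal{A}(\Omega),
\]
which after absorbing the two error terms gives $(1-2\gamma)\,\mathcal{A}(\Omega)\le \dfrac{|E|}{|\Omega|}\,\mathcal{A}(E)$.

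Second, the ``main obstacle'' you single out is a misdiagnosis: no careful choice of the rescaled ball and no bookkeeping to avoid double counting is needed for the constant $c_\gamma=1$. Once one has $(1-2\gamma)\,\mathcal{A}(\Omega)\le \frac{|E|}{|\Omega|}\,\mathcal{A}(E)$, the constant $c_\gamma$ is nothing but an upper bound for the volume ratio $|E|/|\Omega|$: if $|E\setminus\Omega|=0$ then $|E|\le|\Omega|$ trivially and $c_\gamma=1$; otherwise $|E|\le |\Omega|+|\Omega\Delta E|\le |\Omega|\,(1+\gamma\,\mathcal{A}(\Omega))\le|\Omega|\,(1+2\gamma)$, using $\mathcal{A}(\Omega)<2$. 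In particular, deferring the nested case to ``\cite[Lemma 2.8]{BD} verbatim'' does not close the argument (that lemma concerns the equal-measure situation, and in any case the point above has to be made explicitly). So: right strategy in outline, coinciding with the paper's, but the reverse-direction estimate and the origin of $c_\gamma$ — which are the whole proof — still need to be carried out as indicated.
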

\begin{proof} 
We can suppose that $\mathcal{A}(\Omega)>0$, otherwise there is nothing to prove.
We take a ball $B$ such that $|B|=|E|$ and
\[
\mathcal{A}(E)=\frac{|E\Delta B|}{|E|},
\]
and call $B'$ the ball concentric with $B$, such that $|B'|=|\Omega|$.
We recall that 
\[
|\Omega\Delta E|=\|1_\Omega-1_E\|_{L^1(\mathbb{R}^N)},
\]
thus by using the triangle inequality, we get
\[
\begin{split}
\mathcal{A}(E)=\frac{|E\Delta B|}{|E|}&\ge \frac{|\Omega|}{|E|}\,\left(\frac{|\Omega\Delta B'|}{|\Omega|}-\frac{|B'\Delta B|}{|\Omega|}-\frac{|\Omega\Delta E|}{|\Omega|}\right)\\
&\ge \frac{|\Omega|}{|E|}\,\left(\mathcal{A}(\Omega)-2\,\frac{|\Omega\Delta E|}{|\Omega|}\right)\\
&\ge \frac{|\Omega|}{|E|}\,(1-2\,\gamma)\,\mathcal{A}(\Omega).
\end{split}
\]
Observe that in the second inequality we used that
\[
|B'\Delta B|=\Big||\Omega|-|E|\Big|\le |\Omega\Delta E|,
\]
while in the third one we used the hypothesis.
In order to conclude, we only need to bound from below the ratio $|\Omega|/|E|$. If $|E\setminus \Omega|=0$, then we get 
\[
\frac{|\Omega|}{|E|}=\frac{|\Omega|}{|E\setminus\Omega|+|E\cap \Omega|}\ge 1.
\]
If $|E\setminus \Omega|>0$, we observe that
\[
\frac{|\Omega|}{|E|}=\frac{|\Omega|}{|E\setminus\Omega|+|\Omega\cap E|}\ge \frac{|\Omega|}{|\Omega\Delta E|+|\Omega|}\ge \frac{1}{1+\gamma\,\mathcal{A}(\Omega)}.
\]
By recalling that the Fraenkel asymmetry is always smaller than $2$, we get the desired conclusion.
\end{proof}
\subsection{Closeness of level sets}
For an open bounded set $\Omega\subset\mathbb{R}^N$ such that $\mathcal{A}(\Omega)>0$, throughout this section we fix $u_\Omega$ to be the optimal function for $\lambda_{s,q}(\Omega)$ defined in Lemma \ref{lm:eigenfunction}. As in the previous section, we set 
\[
U_\Omega(x,z)=P_z\ast u_\Omega(x),\qquad (x,z)\in\mathbb{R}^{N+1}_+.
\]
Then we define
\begin{equation}
\label{livellone}
T:=\sup\left\{ t>0\, :\, |\{x\in\Omega\, :\, u_\Omega(x)>t\}|\ge |\Omega|\,\left(1-\frac{1}{9}\,\mathcal{A}(\Omega)\right)\right\}.
\end{equation}
Observe that we have $T>0$, since the function
\[
t\mapsto |\{x\in\Omega\, :\, u_\Omega(x)>t\}|,
\]
is non-increasing right-continuous and 
\[
|\{x\in\Omega\, :\, u_\Omega(x)>0\}|=|\Omega|.
\]
Moreover, still thanks to the right-continuity, it is easy to see that
\begin{equation}
\label{misuraT}
|\{x\in\Omega\, :\, u_\Omega(x)>T\}|\le |\Omega|\,\left(1-\frac{1}{9}\,\mathcal{A}(\Omega)\right).
\end{equation}
\begin{lm}
\label{lm:stimaL1}
We fix $T>0$ as in \eqref{livellone} and $\alpha>0$, then
\[
\mbox{ for every } \frac{T}{4}\le t\le \frac{3}{8}\, T \qquad \mbox{ and }\qquad  \mbox{ for every }0<z\le \left(\frac{T}{8\,\sqrt{\alpha\,\beta_{N,s}\,\lambda_{s,q}(\Omega)}}\right)^\frac{1}{s},
\]
we have
\[
\Big|\left\{x\in\Omega\, :\, u_\Omega(x)>\frac{T}{2}\right\}\setminus \left\{x\in\mathbb{R}^N\, :\, U_\Omega(x,z)>t\right\}\Big|\le \frac{1}{\alpha},
\]
and
\[
\left|\left\{x\in\mathbb{R}^N\, :\, U_\Omega(x,z)>t\right\}\setminus\left\{x\in\Omega\, :\, u_\Omega(x)>\frac{T}{8}\right\}\right|\le \frac{1}{\alpha}.
\]
\end{lm}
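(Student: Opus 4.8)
The plan is to derive both inclusions directly from the quantitative decay estimate \eqref{lemma2}, applied with $\varphi=u_\Omega$. Since $\Omega$ is bounded, $u_\Omega\in\mathcal{D}^{s,2}_0(\Omega)\subset W^{s,2}(\mathbb{R}^N)\cap L^{(2^*_{1-s})'}(\mathbb{R}^N)$, while by Lemma \ref{lm:eigenfunction} we have $[u_\Omega]_{W^{s,2}(\mathbb{R}^N)}^2=\lambda_{s,q}(\Omega)$. Hence \eqref{lemma2} yields
\[
\|U_\Omega(\cdot,z)-u_\Omega\|_{L^2(\mathbb{R}^N)}^2\le \beta_{N,s}\,\lambda_{s,q}(\Omega)\,z^{2\,s}.
\]

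First I would handle the first set. Fix $t$ and $z$ in the prescribed ranges, and let $x$ belong to $\{x\in\Omega\,:\,u_\Omega(x)>T/2\}\setminus\{x\in\mathbb{R}^N\,:\,U_\Omega(x,z)>t\}$. Then $u_\Omega(x)>T/2$ while $U_\Omega(x,z)\le t\le (3/8)\,T$, so $|U_\Omega(x,z)-u_\Omega(x)|>T/2-(3/8)\,T=T/8$. Thus this set is contained in $\{x\,:\,|U_\Omega(x,z)-u_\Omega(x)|>T/8\}$, and Chebyshev's inequality combined with the displayed bound shows that its measure does not exceed
\[
\frac{\|U_\Omega(\cdot,z)-u_\Omega\|_{L^2(\mathbb{R}^N)}^2}{(T/8)^2}\le \frac{64\,\beta_{N,s}\,\lambda_{s,q}(\Omega)\,z^{2\,s}}{T^2}.
\]
The restriction $z\le\bigl(T/(8\,\sqrt{\alpha\,\beta_{N,s}\,\lambda_{s,q}(\Omega)})\bigr)^{1/s}$ is exactly equivalent to $64\,\beta_{N,s}\,\lambda_{s,q}(\Omega)\,z^{2\,s}\le T^2/\alpha$, which gives the first claim.

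For the second set I would argue in the same way, using in addition that $u_\Omega$ vanishes on $\mathbb{R}^N\setminus\Omega$, so that $\{x\in\Omega\,:\,u_\Omega(x)>T/8\}=\{x\in\mathbb{R}^N\,:\,u_\Omega(x)>T/8\}$ and its complement in $\mathbb{R}^N$ is $\{x\,:\,u_\Omega(x)\le T/8\}$. If $x$ lies in $\{x\in\mathbb{R}^N\,:\,U_\Omega(x,z)>t\}\setminus\{x\in\Omega\,:\,u_\Omega(x)>T/8\}$, then $U_\Omega(x,z)>t\ge T/4$ while $u_\Omega(x)\le T/8$, hence again $|U_\Omega(x,z)-u_\Omega(x)|>T/4-T/8=T/8$, and the same Chebyshev estimate under the same restriction on $z$ concludes.

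There is no real obstacle here: the whole content is the quantitative $L^2$ closeness of $U_\Omega(\cdot,z)$ to $u_\Omega$ for small $z$ furnished by \eqref{lemma2}, together with Chebyshev's inequality. The only point requiring (minor) care is the bookkeeping of the thresholds: the window $T/4\le t\le (3/8)\,T$ is chosen precisely so that, in both inclusions, the pointwise gap between $U_\Omega(\cdot,z)$ and $u_\Omega$ is bounded below by $T/8$, whose square is exactly what dictates the admissible range of $z$.
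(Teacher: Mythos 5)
Your proof is correct and follows essentially the same route as the paper: both rest on the $L^2$ bound \eqref{lemma2} applied to $u_\Omega$ (using $[u_\Omega]^2_{W^{s,2}(\mathbb{R}^N)}=\lambda_{s,q}(\Omega)$), the observation that on each of the two sets the pointwise gap $|U_\Omega(\cdot,z)-u_\Omega|$ exceeds $T/8$, and Chebyshev's inequality, with the restriction on $z$ calibrated exactly so that the resulting measure bound is $1/\alpha$. The only cosmetic difference is that you apply Chebyshev at the level $T/8$ and then invoke the bound on $z$, while the paper applies it at the level $\sqrt{\alpha\,\beta_{N,s}\,\lambda_{s,q}(\Omega)}\,z^s$ and then checks that $T/8$ dominates it; these are algebraically the same argument.
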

\begin{proof}
By \eqref{lemma2}, we have 
\[
\|U_\Omega(\cdot,z)-u_\Omega\|^2_{L^2(\mathbb{R}^N)}\le \beta_{N,s}\,\lambda_{s,q}(\Omega)\,z^{2\,s},\qquad \mbox{ for a.\,e. }z>0,
\]
where we also used the minimality of $u_\Omega$.
Then by using Markov-Chebychev's inequality, we get for $z>0$
\begin{equation}
\label{misura}
\left|\{x\in\mathbb{R}^N\, :\, |U_\Omega(x,z)-u_\Omega(x)|>\sqrt{\beta_{N,s}\,\lambda_{s,q}(\Omega)\,\alpha}\,z^s \}\right|\le \frac{1}{\alpha}.
\end{equation}
We now take $t$ and $z$ as in the statement, then for every $x$ such that $u_\Omega(x)>T/2$ and $U_\Omega(x,z)\le t$, we have
\[
u_\Omega(x)-U_\Omega(x,z)>\frac{T}{2}-t\ge \frac{T}{8}\ge \sqrt{\beta_{N,s}\,\lambda_{s,q}(\Omega)\,\alpha}\,z^s,
\]
that is
\[
\begin{split}
\Big(\left\{x\in\Omega\, :\,u_\Omega(x)>\frac{T}{2}\right\}&\setminus \left\{x\in\mathbb{R}^N\, :\, U_\Omega(x,z)>t\right\}\Big)\\
&\subset \left\{x\in\mathbb{R}^N\, :\, |U_\Omega(x,z)-u_\Omega(x)|>\sqrt{\beta_{N,s}\,\lambda_{s,q}(\Omega)\,\alpha}\,z^s\right \}.
\end{split}
\]
By using \eqref{misura}, we get 
\[
\left|\left\{x\in\Omega\, :\, u_\Omega(x)>\frac{T}{2}\right\}\setminus \left\{x\in\mathbb{R}^N\, :\, U_\Omega(x,z)>t\right\}\right|\le \frac{1}{\alpha},
\]
as desired. The second estimate is proved in a similar way, we leave the details to the reader.
\end{proof}

\begin{prop}
\label{prop:asimmetrie}
We fix $T>0$ as in \eqref{livellone}, then
\[
\mbox{ for } \frac{T}{4}\le t\le \frac{3}{8}\, T \qquad \mbox{ and }\qquad \mbox{ for }0<z\le \left(\frac{\sqrt{\mathcal{A}(\Omega)\,|\Omega|}}{24\,\sqrt{\beta_{N,s}\,\lambda_{s,q}(\Omega)}}\,T\right)^\frac{1}{s},
\]
we have
\begin{equation}
\label{eddai!}
\Big|\big|\left\{x\in\mathbb{R}^N\, :\, U_\Omega(x,z)>t\right\}\big|-|\Omega|\Big|\le \frac{1}{3}\,|\Omega|\,\mathcal{A}(\Omega),
\end{equation}
and
\begin{equation}
\label{asimmetria}
\mathcal{A}\big(\left\{x\in\mathbb{R}^N\, :\, U_\Omega(x,z)>t\right\}\big)\ge \frac{1}{5}\,\mathcal{A}(\Omega).
\end{equation}
\end{prop}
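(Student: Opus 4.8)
The plan is to deduce both \eqref{eddai!} and \eqref{asimmetria} from Lemma \ref{lm:stimaL1} together with the definition \eqref{livellone} of the level $T$ and the transfer-of-asymmetry result Lemma \ref{lm:asimmetria}. First I would check that, under the stated range of $z$, the hypothesis of Lemma \ref{lm:stimaL1} is satisfied with the choice $\alpha = 24/(\mathcal{A}(\Omega)\,|\Omega|)$; indeed, with this $\alpha$ one has $1/\alpha = \tfrac{1}{24}\,\mathcal{A}(\Omega)\,|\Omega|$, and the threshold $\big(T/(8\sqrt{\alpha\,\beta_{N,s}\,\lambda_{s,q}(\Omega)})\big)^{1/s}$ becomes exactly $\big(\sqrt{\mathcal{A}(\Omega)|\Omega|}\,T/(24\sqrt{\beta_{N,s}\,\lambda_{s,q}(\Omega)}\,\sqrt{24/24})\big)^{1/s}$, which matches (up to the harmless bookkeeping of the numerical constant) the bound appearing in the statement. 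So for $z$ in the stated range and $t\in[T/4,3T/8]$ both inclusions of Lemma \ref{lm:stimaL1} hold with this $1/\alpha = \tfrac{1}{24}\mathcal{A}(\Omega)|\Omega|$.

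Next I would prove \eqref{eddai!}. Write $E_{t,z} = \{x : U_\Omega(x,z)>t\}$, $\Omega_{T/2} = \{x\in\Omega : u_\Omega(x)>T/2\}$ and $\Omega_{T/8} = \{x\in\Omega : u_\Omega(x)>T/8\}$. From the first inclusion of Lemma \ref{lm:stimaL1} we get $|\Omega_{T/2}| \le |E_{t,z}| + \tfrac{1}{24}\mathcal{A}(\Omega)|\Omega|$, and from the second $|E_{t,z}| \le |\Omega_{T/8}| + \tfrac{1}{24}\mathcal{A}(\Omega)|\Omega|$. Now I must control $|\Omega_{T/2}|$ from below and $|\Omega_{T/8}|$ from above in terms of $|\Omega|$. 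Since $T/8 < T$, the definition \eqref{livellone} of $T$ as a supremum gives $|\Omega_{T/8}| \ge |\Omega|(1-\tfrac{1}{9}\mathcal{A}(\Omega))$; but I need the reverse, so instead I use that $T/2 > T/8$... — the correct use is: for the \emph{upper} bound on $|E_{t,z}|$ I compare with a level $\le T$, and for the \emph{lower} bound I compare with a level $>T$. More precisely, I would redo the two inclusions of Lemma \ref{lm:stimaL1} reading them as: $|E_{t,z}| \le |\Omega_{T/8}| + \tfrac{1}{24}\mathcal{A}(\Omega)|\Omega|$ with $T/8<T$, hence by right-continuity and the definition of $T$, $|\Omega_{T/8}|\le|\Omega|$; so $|E_{t,z}|\le |\Omega| + \tfrac{1}{24}\mathcal{A}(\Omega)|\Omega| \le |\Omega|(1+\tfrac13\mathcal{A}(\Omega))$. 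For the lower bound: $|E_{t,z}|\ge |\Omega_{T/2}| - \tfrac{1}{24}\mathcal{A}(\Omega)|\Omega|$, and since $T/2<T$ is \emph{not} what helps — rather I use $3T/8 \ge T/4$ and need a level $\ge$ something where the measure is still large; here $T/2$ could exceed $T$ only if $T$ is small, so I instead note $|\Omega_{T/2}| \ge |\Omega_{T}|$, and I claim $|\Omega_{T/2}|\ge|\Omega|(1-\tfrac19\mathcal{A}(\Omega))$ because $T/2<T$ forces, by the supremum definition, $|\{u_\Omega>T/2\}|\ge|\Omega|(1-\tfrac19\mathcal{A}(\Omega))$. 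Combining, $|E_{t,z}|\ge |\Omega|(1-\tfrac19\mathcal{A}(\Omega)) - \tfrac{1}{24}\mathcal{A}(\Omega)|\Omega| \ge |\Omega|(1-\tfrac13\mathcal{A}(\Omega))$, using $\tfrac19+\tfrac{1}{24}\le\tfrac13$. This gives \eqref{eddai!}.

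For \eqref{asimmetria} I would apply Lemma \ref{lm:asimmetria} with the roles "$\Omega$" $\leftrightarrow \Omega_{T/8}$ and "$E$" $\leftrightarrow E_{t,z}$ (or directly with $\Omega$ and $E_{t,z}$). The symmetric difference $|\Omega \Delta E_{t,z}|$ splits as $|E_{t,z}\setminus \Omega| + |\Omega\setminus E_{t,z}|$. The first piece is $\le |E_{t,z}\setminus \Omega_{T/8}| \le \tfrac{1}{24}\mathcal{A}(\Omega)|\Omega|$ by the second inclusion of Lemma \ref{lm:stimaL1} (since $\Omega_{T/8}\subset\Omega$). The second piece $|\Omega\setminus E_{t,z}| \le |\Omega\setminus \Omega_{T/2}| + |\Omega_{T/2}\setminus E_{t,z}| \le \tfrac19\mathcal{A}(\Omega)|\Omega| + \tfrac{1}{24}\mathcal{A}(\Omega)|\Omega|$, using \eqref{misuraT}-type control ($|\Omega_{T/2}|\ge|\Omega|(1-\tfrac19\mathcal{A}(\Omega))$ so $|\Omega\setminus\Omega_{T/2}|\le\tfrac19\mathcal{A}(\Omega)|\Omega|$) and the first inclusion of Lemma \ref{lm:stimaL1}. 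Adding, $|\Omega\Delta E_{t,z}| \le (\tfrac19 + \tfrac{2}{24})\mathcal{A}(\Omega)|\Omega| = \tfrac{7}{36}\mathcal{A}(\Omega)|\Omega| \le \tfrac14\,\mathcal{A}(\Omega)|\Omega|$, so that $|\Omega\Delta E_{t,z}|/|\Omega| \le \gamma\,\mathcal{A}(\Omega)$ with $\gamma = \tfrac14 < \tfrac12$. Lemma \ref{lm:asimmetria} then yields $\mathcal{A}(E_{t,z}) \ge \tfrac{1-2\gamma}{1+2\gamma}\mathcal{A}(\Omega) = \tfrac{1/2}{3/2}\mathcal{A}(\Omega) = \tfrac13\mathcal{A}(\Omega) \ge \tfrac15\mathcal{A}(\Omega)$, which proves \eqref{asimmetria}.

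The main obstacle, and the place demanding care, is the bookkeeping of the numerical constants and—more importantly—getting the \emph{direction} of each comparison right: one must use that $T/8, T/4, 3T/8, T/2$ are all strictly below $T$, so that the superlevel sets at those heights still capture at least a $(1-\tfrac19\mathcal{A}(\Omega))$-fraction of $|\Omega|$ (via the supremum definition \eqref{livellone}), while never invoking a level $>T$ where this control is lost. Once the inclusions of Lemma \ref{lm:stimaL1} are correctly paired with the appropriate heights, the two estimates follow by elementary measure-theoretic manipulations and the triangle inequality for symmetric differences, exactly as in the local argument of \cite[Section 2]{BD}.
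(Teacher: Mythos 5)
Your overall route is exactly the paper's: combine the $L^2$-closeness of level sets (Lemma \ref{lm:stimaL1}) with the defining property \eqref{livellone} of $T$ (which gives $|\{u_\Omega>T/2\}|\ge |\Omega|\,(1-\frac{1}{9}\,\mathcal{A}(\Omega))$ since $T/2<T$), estimate $|\Omega\Delta E_{t,z}|$ by the triangle inequality through the two levels $T/2$ and $T/8$, and then transfer the asymmetry via Lemma \ref{lm:asimmetria}. In structure this is the paper's proof.

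There is, however, one concrete slip which is not ``harmless bookkeeping'': the choice $\alpha=24/(\mathcal{A}(\Omega)\,|\Omega|)$ does not make the admissible $z$-range of Lemma \ref{lm:stimaL1} match the range in the statement. The lemma requires $z\le \bigl(T/(8\sqrt{\alpha\,\beta_{N,s}\,\lambda_{s,q}(\Omega)})\bigr)^{1/s}$; with your $\alpha$ this threshold equals $\bigl(T\sqrt{\mathcal{A}(\Omega)\,|\Omega|}/(16\sqrt{6}\,\sqrt{\beta_{N,s}\,\lambda_{s,q}(\Omega)})\bigr)^{1/s}$, and $16\sqrt{6}\approx 39.2>24$, so it is \emph{strictly smaller} than the threshold asserted in the proposition: your argument proves nothing for $z$ between the two values. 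The correct calibration is $\alpha=9/(\mathcal{A}(\Omega)\,|\Omega|)$, so that $8\sqrt{9}=24$ and the ranges coincide exactly; this is what the paper does. Then $1/\alpha=\frac{1}{9}\,\mathcal{A}(\Omega)\,|\Omega|$, your symmetric-difference estimate becomes $|\Omega\Delta E_{t,z}|\le\bigl(\frac{1}{9}+\frac{2}{9}\bigr)\,\mathcal{A}(\Omega)\,|\Omega|=\frac{1}{3}\,\mathcal{A}(\Omega)\,|\Omega|$, which yields \eqref{eddai!} directly and, via Lemma \ref{lm:asimmetria} with $\gamma=\frac{1}{3}$, exactly the constant $\frac{1-2/3}{1+2/3}=\frac{1}{5}$ of \eqref{asimmetria}. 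With this single change your proof coincides with the paper's. A minor further remark: the hesitation about whether ``$T/2$ could exceed $T$'' is unnecessary, since $T/2<T$ always; monotonicity of $t\mapsto|\{u_\Omega>t\}|$ and the supremum definition \eqref{livellone} give the lower bound at level $T/2$ that you eventually use, while $|\Omega_{T/8}|\le|\Omega|$ is trivial.
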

\begin{proof}
For ease of notation, we set
\[
\Omega_t=\left\{x\in\Omega\, :\, u_\Omega(x)>t\right\},
\]
and
\[
E_{t,z}=\left\{x\in\mathbb{R}^N\, :\, U_\Omega(x,z)>t\right\},\qquad \mbox{ for } t\in \left[\frac{T}{4},\frac{3}{8}\,T\right]\, \mbox{ and }\, z\in \left(0,\left(\frac{\sqrt{\mathcal{A}(\Omega)\,|\Omega|}}{24\,\sqrt{\beta_{N,s}\,\lambda_{s,q}(\Omega)}}\,T\right)^\frac{1}{s}\right].
\]
By definition \eqref{livellone} of the level $T$, we have that
\begin{equation}
\label{ciao}
\frac{|\Omega\setminus \Omega_{T/2}|}{|\Omega|}\le \frac{1}{9}\,\mathcal{A}(\Omega).
\end{equation}
We now observe that by using \eqref{ciao} and Lemma \ref{lm:stimaL1} with the choice 
\[
\alpha=\frac{9}{\mathcal{A}(\Omega)\,|\Omega|},
\] 
we get
\[
\begin{split}
\frac{|E_{t,z}\Delta \Omega|}{|\Omega|}&=\frac{|\Omega\setminus E_{t,z}|}{|\Omega|}+\frac{|E_{t,z}\setminus \Omega|}{|\Omega|}\\
&\le \frac{|\Omega\setminus \Omega_{T/2}|}{|\Omega|}+\frac{|\Omega_{T/2}\setminus E_{t,z}|}{|\Omega|}+\frac{|E_{t,z}\setminus \Omega_{T/8}|}{|\Omega|}\\
&\le \frac{|\Omega\setminus \Omega_{T/2}|}{|\Omega|}+\frac{2}{\alpha}\,\frac{1}{|\Omega|}\le \frac{1}{3}\,\mathcal{A}(\Omega).
\end{split}
\]
Finally, by triangle inequality we have
\[
|\Omega|-|E_{t,z}\Delta \Omega|\le |E_{t,z}|\le |\Omega|+|E_{t,z}\Delta \Omega|,
\]
thus by joining the last two estimates we get \eqref{eddai!}.
\par
We can now apply Lemma \ref{lm:asimmetria} with $\gamma=1/3$, so to obtain
\[
\mathcal{A}(E_{t,z})\ge \frac{1-\dfrac{2}{3}}{1+\dfrac{2}{3}}\,\mathcal{A}(\Omega)=\frac{1}{5}\,\mathcal{A}(\Omega),
\]
which proves \eqref{asimmetria}.
\end{proof}
\subsection{A remainder term}
We now introduce some quantitative elements in the proof of the Faber-Krahn inequality presented in Theorem \ref{teo:FK}. With this aim, we need to recall the {\it sharp quantitative isoperimetric inequality}
\begin{equation}
\label{isoquant}
|\Omega|^{\frac{1-N}{N}}\,P(\Omega)-|B|^{\frac{1-N}{N}}\,P(B)\ge \Theta_N\,\mathcal{A}(\Omega)^2,
\end{equation}
proved in \cite[Theorem 1.1]{FMP} (see also \cite[Theorem 4.3]{CL}). Here $P$ denotes the distributional perimeter of a set. A possible explicit value for the constant $\Theta_N$ is computed in \cite[equation (1.12)]{FigMP}. In our notation, this reads
\[
\Theta_N=\omega_N^{1/N}\,\frac{\left(2-2^\frac{N-1}{N}\right)^3}{(181)^2\,N^{13}}.
\]
\begin{prop}[An enhanced P\'olya-Szeg\H{o}--type estimate]
\label{prop:PS}
Let $0<s<1$ and let $\Omega\subset\mathbb{R}^N$ be an open bounded set. For $t>0$ and $z>0$, we set
\[
E_{t,z}=\left\{x\in\mathbb{R}^N\, :\, U_\Omega(x,z)>t\right\}\qquad \mbox{ and }\qquad \mu_z(t)=|E_{t,z}|.
\]
Then for every ball $B\subset\mathbb{R}^N$ such that $|B|=|\Omega|$, 
we have
\[
\begin{split}
\lambda_{s,q}(\Omega)&-\lambda_{s,q}(B)\ge C_1\,\int_0^{+\infty} z^{1-2\,s}\left(\int_0^{+\infty} \mathcal{A}(E_{t,z})^2\,\frac{\left(\mu_{z}(t)^\frac{N-1}{N}\right)^2}{-\mu'_z(t)}\,dt\right)\,dz,
\end{split}
\]
where the constant $C_1=C_1(N,s)>0$ is given by
\begin{equation}
\label{C1}
C_1=2\,N\,\omega_N^{1/N}\,\Theta_N\,\gamma_{N,s}>0,
\end{equation}
and $\gamma_{N,s}$ is the same as in \eqref{equiv}.
\end{prop}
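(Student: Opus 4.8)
The plan is to revisit the proof of Theorem \ref{teo:FK} and track the loss in the P\'olya--Szeg\H{o} inequality \eqref{poliaszego} slice by slice in $z$. Fix a ball $B$ with $|B|=|\Omega|$ and recall that $\lambda_{s,q}(B)$ is attained; by \eqref{PS-FMM}, \eqref{energia} and the minimality estimates used in Theorem \ref{teo:FK} we have
\[
\lambda_{s,q}(\Omega)-\lambda_{s,q}(B)\ge \gamma_{N,s}\,\iint_{\mathbb{R}^{N+1}_+} z^{1-2\,s}\,\Big(|\nabla U_\Omega|^2-|\nabla U_\Omega^*|^2\Big)\,dx\,dz,
\]
since $u_\Omega^*$ is admissible for $\lambda_{s,q}(B_\Omega)$. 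Dropping the $\partial_z$ contribution (which is nonnegative by \eqref{polyaz}) and the cross terms, it suffices to bound from below, for a.e.\ fixed $z>0$, the defect
\[
\int_{\mathbb{R}^N}|\nabla_x U_\Omega(\cdot,z)|^2\,dx-\int_{\mathbb{R}^N}|\nabla_x U_\Omega^*(\cdot,z)|^2\,dx
\]
by the integral $2\,N\,\omega_N^{1/N}\,\Theta_N\int_0^{+\infty}\mathcal{A}(E_{t,z})^2\,(\mu_z(t)^{(N-1)/N})^2/(-\mu_z'(t))\,dt$.

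The second step is the slicewise quantitative P\'olya--Szeg\H{o} inequality, which is exactly the argument of \cite[Lemma 2.8]{HN} / \cite[Theorem 2.10]{BD}. Applying the Coarea Formula to $v:=U_\Omega(\cdot,z)$ on $\mathbb{R}^N$ and writing $\mu_z(t)=|E_{t,z}|$, one has by Cauchy--Schwarz on each level set
\[
\int_{\mathbb{R}^N}|\nabla_x v|^2\,dx=\int_0^{+\infty}\left(\int_{\partial^* E_{t,z}}|\nabla_x v|\,d\mathcal{H}^{N-1}\right)\frac{dt}{-\mu_z'(t)}\cdot(-\mu_z'(t))\ \ge\ \int_0^{+\infty}\frac{P(E_{t,z})^2}{-\mu_z'(t)}\,dt,
\]
with equality (for the Schwarz symmetrization) giving $\int|\nabla_x v^*|^2=\int_0^{+\infty} P(B_{t,z})^2/(-\mu_z'(t))\,dt$, where $B_{t,z}$ is the ball of volume $\mu_z(t)$. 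Subtracting and using $P(E_{t,z})^2-P(B_{t,z})^2=(P(E_{t,z})-P(B_{t,z}))(P(E_{t,z})+P(B_{t,z}))\ge (P(E_{t,z})-P(B_{t,z}))\cdot 2\,P(B_{t,z})$, and then the sharp quantitative isoperimetric inequality \eqref{isoquant} in the form $P(E_{t,z})-P(B_{t,z})\ge P(B_{t,z})\,\Theta_N\,\mathcal{A}(E_{t,z})^2\ge N\,\omega_N^{1/N}\,\mu_z(t)^{(N-1)/N}\,\Theta_N\,\mathcal{A}(E_{t,z})^2$ together with $P(B_{t,z})=N\,\omega_N^{1/N}\,\mu_z(t)^{(N-1)/N}$, one picks up exactly $2\,N\,\omega_N^{1/N}\,\Theta_N\,(\mu_z(t)^{(N-1)/N})^2\,\mathcal{A}(E_{t,z})^2/(-\mu_z'(t))$. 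Multiplying by $\gamma_{N,s}\,z^{1-2s}$ and integrating in $z$ yields the claimed inequality with $C_1$ as in \eqref{C1}.

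The main obstacle is justifying the slicewise argument rigorously: one must know that for a.e.\ $z$ the function $v=U_\Omega(\cdot,z)$ is regular enough (in $W^{1,2}_{\mathrm{loc}}$ with the right decay) for the Coarea Formula and for the fact that $\mu_z$ is absolutely continuous with $-\mu_z'(t)>0$ for a.e.\ $t$ in the relevant range, so that all the quantities $P(E_{t,z})$, $\mu_z'(t)$ make classical sense and the chain of identities above holds. This is where the convolution representation \eqref{poiss} helps: $U_\Omega(\cdot,z)=P_z*u_\Omega$ is smooth and decaying for each $z>0$ because $u_\Omega\in L^\infty\cap L^1$, so Sard-type arguments apply and the level sets are smooth for a.e.\ $t$. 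A secondary point is to handle the non-expansiveness/measurability in $z$ needed to integrate the pointwise (in $z$) defect estimate; this follows from Fubini together with the membership $z^{1-2s}|\nabla U_\Omega|^2\in L^1(\mathbb{R}^{N+1}_+)$ established in Proposition \ref{mini}. Everything else is the bookkeeping indicated above.
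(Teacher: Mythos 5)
Your proposal follows essentially the same route as the paper's own proof of Proposition \ref{prop:PS}: reduce to the $x$-gradient contribution using \eqref{equiv}, \eqref{energia} and \eqref{polyaz}, then run the Hansen--Nadirashvili argument slice by slice as in \cite[Lemma 2.9]{BD} (coarea formula plus Cauchy--Schwarz as in \eqref{conto1}, the bound \eqref{distribution} on $-\mu_z'$, equality for the symmetrized function, for which the paper invokes \cite[Proposition 2.4]{FMM}), insert the sharp quantitative isoperimetric inequality, and integrate in $z$ with the weight $z^{1-2s}$. The only point to correct is the normalization of \eqref{isoquant} in your chain: as stated in the paper it gives $P(E_{t,z})-P(B_{t,z})\ge \Theta_N\,\mu_z(t)^{\frac{N-1}{N}}\,\mathcal{A}(E_{t,z})^2$, whereas the form you wrote, $P(E_{t,z})-P(B_{t,z})\ge P(B_{t,z})\,\Theta_N\,\mathcal{A}(E_{t,z})^2$, is stronger by the factor $N\,\omega_N^{1/N}>1$ and is not what \eqref{isoquant} asserts (and it is also inconsistent with the constant you then claim to ``pick up exactly''). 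Multiplying the correct form by $2\,P(B_{t,z})=2\,N\,\omega_N^{1/N}\,\mu_z(t)^{\frac{N-1}{N}}$ yields precisely the gain $2\,N\,\omega_N^{1/N}\,\Theta_N\,\mu_z(t)^{2\frac{N-1}{N}}\,\mathcal{A}(E_{t,z})^2$, so the constant $C_1$ in \eqref{C1} and the final statement are unaffected; only that intermediate line needs adjusting.
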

\begin{proof}
We introduce some quantitative informations into the generalized P\'olya-Szeg\H{o} principle used in \eqref{PS-FMM}. We have seen that
\begin{equation}
\label{conto0}
\lambda_{s,q}(\Omega)=[u_\Omega]_{W^{s,2}(\mathbb{R}^N)}^2
=\gamma_{N,s}\,\iint_{\mathbb{R}^{N+1}_+} z^{1-2\,s}\,|\nabla_x U_\Omega|^2\,dx\,dz
+\gamma_{N,s}\,\iint_{\mathbb{R}^{N+1}_+} z^{1-2\,s}\,\left|\partial_z U_\Omega\right|^2\,dx\,dz.
\end{equation}
For the $z-$derivative, we already observed that
\[
\iint_{\mathbb{R}^{N+1}_+} z^{1-2\,s}\,\left|\partial_z U_\Omega\right|^2\,dx\,dz\ge \iint_{\mathbb{R}^{N+1}_+} z^{1-2\,s}\,\left|\partial_z U^*_{\Omega}\right|^2\,dx\,dz.
\]
For the $x-$derivative, we proceed as in the local case: by using the coarea formula, this can be written as
\begin{equation}
\begin{split}
\label{conto1}
&\iint_{\mathbb{R}^{N+1}_+} z^{1-2\,s}\,|\nabla_x U_\Omega|^2\,dx\,dz\\
&\hspace{2em}=\int_{0}^{+\infty}z^{1-2s}\left(\int_{0}^{+\infty}\left(\int_{\{x\in\mathbb{R}^N \, : \, U_\Omega(x,z)=t\}} \,|\nabla_x U_\Omega|^2\,\frac{d\mathcal{H}^{N-1}(x)}{|\nabla_x U_\Omega|}\right)\,dt\right)\,dz\\
&\hspace{2em}\ge \int_{0}^{+\infty}z^{1-2s}\left(\int_{0}^{+\infty}\frac{P(E_{t,z})^2}{\displaystyle\int_{\{x\in\mathbb{R}^N \, : \, U_\Omega(x,z)=t\}}\frac{d\mathcal{H}^{N-1}(x)}{|\nabla_x U_\Omega|}}\,dt\right)\,dz
\end{split}
\end{equation}
where $P(E_{t,z})$ denotes the perimeter of the set $E_{t,z}$, and we have used Jensen's inequality. 
Following the same computation as in \cite[Lemma 2.9]{BD}, defining
$$
E_{t,z}^*=\{x\in\mathbb{R}^N \ : \ U_\Omega^*(x,z)>t\},
$$
and using  the quantitative isoperimetric inequality \eqref{isoquant}, one can prove that
\[
P(E_{t,z})^2\geq P(E_{t,z}^*)^2+2\,N\,\omega_N^{1/N}\,\Theta_N\,\mu_z(t)^{2\,\frac{N-1}{N}}\mathcal{A}(E_{t,z})^2.
\]
Thus we obtain 
\[
\begin{split}
\iint_{\mathbb{R}^{N+1}_+} z^{1-2\,s}\,|\nabla_x U_\Omega|^2\,dx\,dz
&\ge \int_{0}^{+\infty}z^{1-2s}\left(\int_{0}^{+\infty}\frac{P(E_{t,z}^*)^2}{-\mu_z'(t)}\,dt\right)\,dz\\
&+2\,N\,\omega_N^{1/N}\,\Theta_N\,\int_{0}^{+\infty} z^{1-2\,s}\,\left(\int_0^{+\infty}\frac{\left(\mu_z(t)^\frac{N-1}{N}\right)^2\,\mathcal{A}(E_{t,z})^2}{-\mu'_z(t)} dt\right)\,dz.
\end{split}
\]
Observe that in the right-hand side, we also used the fact that
\begin{equation}
\label{distribution}
-\mu_z'(t)\geq\int_{\{x\in\mathbb{R}^N \ : \ U_\Omega(x,z)=t\}}\frac{d\mathcal{H}^{N-1}(x)}{|\nabla_x U_\Omega|}.
\end{equation}
We remark that one has equality in \eqref{conto1} for a radial function, since the modulus of the gradient is constant on each level set. Moreover, the isoperimetric inequality is obviously an equality in the case of balls. Finally, for the symmetrized function $U^*_\Omega$, one has the equality also in \eqref{distribution} (see \cite[Proposition 2.4]{FMM}). 
\par
By using these facts, we can conclude that
$$\int_{0}^{+\infty}z^{1-2s}\left(\int_{0}^{+\infty}\frac{P(E_{t,z}^*)^2}{-\mu_z'(t)}\,dt\right)\,dz=\iint_{\mathbb{R}^{N+1}_+} z^{1-2\,s}\,|\nabla_x U^*_\Omega|^2\,dx\,dz.$$
Moreover, we have seen in the proof of Theorem \ref{teo:FK} that
\[
\gamma_{N,s}\,\iint_{\mathbb{R}^{N+1}_+} z^{1-2\,s}\,|\nabla U^*_\Omega|^2\,dx\,dz\geq\lambda_{s,q}(B).
\]
Hence, coming back to \eqref{conto0}, we have
\[
\begin{split}
\lambda_{s,q}(\Omega)&=\gamma_{N,s}\,\iint_{\mathbb{R}^{N+1}_+} z^{1-2\,s}\,|\nabla_x U_\Omega|^2\,dx\,dz
+\gamma_{N,s}\,\iint_{\mathbb{R}^{N+1}_+} z^{1-2\,s}\,\left|\partial_z U_\Omega\right|^2\,dx\,dz\\
&\geq\lambda_{s,q}(B)+2\,\gamma_{N,s}N\,\omega_N^{1/N}\,\Theta_N\,\int_{0}^{+\infty} z^{1-2\,s}\,\left(\int_0^{+\infty}\frac{\left(\mu_z(t)^\frac{N-1}{N}\right)^2\,\mathcal{A}(E_{t,z})^2}{-\mu'_z(t)} dt\right)\,dz.
\end{split}
\]
This concludes the proof.
\end{proof}

\section{Proof of the main result}
\label{sec:5}

\subsection{Proof of Theorem \ref{teo:main}}

Thanks to the scale invariance, we can assume that $|\Omega|=1$. By Lemma \ref{bounded-set}, we can further assume $\Omega$ to be bounded. Thus we have to prove that
\begin{equation}
\label{voglio}
\lambda_{s,q}(\Omega)-\lambda_{s,q}(B)\ge C\,\mathcal{A}(\Omega)^\frac{3}{s},
\end{equation}
where $B$ is any ball such that $|B|=|\Omega|=1$. 
We also observe that if $\lambda_{s,q}(\Omega)>2\,\lambda_{s,q}(B)$, then by using that $\mathcal{A}(\Omega)<2$
\[
\lambda_{s,q}(\Omega)-\lambda_{s,q}(B)>\lambda_{s,q}(B)=\frac{\lambda_{s,q}(B)}{2^\frac{3}{s}}\,2^\frac{3}{s}>\left(\frac{\lambda_{s,q}(B)}{2^\frac{3}{s}}\right)\,\mathcal{A}(\Omega)^\frac{3}{s},
\]
i.e. we get the desired estimate \eqref{voglio}, with
\[
C=\frac{\lambda_{s,q}(B)}{2^\frac{3}{s}}.
\]
Thus, we can confine ourselves to consider the case 
\begin{equation}
\label{restrizione}
\lambda_{s,q}(\Omega)\le 2\,\lambda_{s,q}(B).
\end{equation}
We now set
\begin{equation}
\label{C2}
C_2=\frac{1}{9}\,\left(\frac{2}{q}+\frac{2\,s}{N}-1\right).
\end{equation}
Observe that $C_2>0$, thanks to the fact that $q<2^*_s$.
We define
\[
T_0=\frac{C_2}{4\,(1+C_2)}\,\mathcal{A}(\Omega),
\]
then we have two possibilities for the value $T$ defined in \eqref{livellone}:
\[
\mbox{either }\qquad T\le T_0\qquad \mbox{ or }\qquad T>T_0.
\]
{\bf Case $T\le T_0$.} This is the easy case, here we do not need to work with the extension in $\mathbb{R}^{N+1}_+$. In particular, Proposition \ref{prop:PS} is not needed here.
\par
We consider the set $\Omega_T=\{x\in\Omega\, :\, u_\Omega(x)>T\}$, which is open thanks to the continuity of $u_\Omega$. We also verify that this set is not empty. Indeed, by using Minkowski's inequality and the fact that
\[
u_\Omega\le (u_\Omega-T)_++T,
\] 
we have (recall that $|\Omega|=1$ and $\int_\Omega u_\Omega^q\,dx=1$)
\begin{equation}
\label{nonvuoto}
\begin{split}
\left(\int_{\Omega_T} (u_\Omega-T)^q_+\,dx\right)^\frac{1}{q}&= \left(\int_{\Omega} (u_\Omega-T)_+^q\,dx\right)^\frac{1}{q}\\
&\ge \left(\int_{\Omega} u^q_\Omega\,dx\right)^\frac{1}{q}-T\,|\Omega|^\frac{1}{q}=1-T.
\end{split}
\end{equation}
By observing that $T\le T_0<1/2$, the last estimate ensures that $\Omega_T$ has positive measure.
\par
We now use the function $(u_\Omega-T)_+$ in the variational definition of $\lambda_{s,q}(\Omega_T)$, we get
\[
\lambda_{s,q}(\Omega_T)\le \frac{\Big[(u_\Omega-T)_+\Big]^2_{W^{s,2}(\mathbb{R}^N)}}{\displaystyle\left(\int_{\Omega_T} (u_\Omega-T)^q_+\,dx\right)^\frac{2}{q}}.
\]
By using that
\[
\Big[(u_\Omega-T)_+\Big]^2_{W^{s,2}(\mathbb{R}^N)}\le [u_\Omega]^2_{W^{s,2}(\mathbb{R}^N)}=\lambda_{s,q}(\Omega),
\]
we then obtain
\[
\begin{split}
\lambda_{s,q}(\Omega)&\ge \lambda_{s,q}(\Omega_T)\,\left(\int_{\Omega_T} (u_\Omega-T)^q_+\,dx\right)^\frac{2}{q}\\
&\ge \lambda_{s,q}(B)\,|\Omega_T|^{1-\frac{2}{q}-\frac{2\,s}{N}}\,\left(\int_{\Omega_T} (u_\Omega-T)^q_+\,dx\right)^\frac{2}{q}.
\end{split}
\]
In the second inequality we used the Faber-Krahn inequality for $\lambda_{s,q}$, applied to the open set $\Omega_T$. By using \eqref{misuraT} and basic calculus\footnote{We use the convexity of the function $t\mapsto (1+t)^{-\alpha}$, for $\alpha>0$.}, we get
\[
|\Omega_T|^{1-\frac{2}{q}-\frac{2\,s}{N}}\ge \left(1-\frac{1}{9}\,\mathcal{A}(\Omega)\right)^{1-\frac{2}{q}-\frac{2\,s}{N}}\ge 1+\frac{1}{9}\,\left(\frac{2}{q}+\frac{2\,s}{N}-1\right)\,\mathcal{A}(\Omega).
\]
By recalling the definition of $C_2$, up to now we have obtained
\begin{equation}
\label{intermedio}
\lambda_{s,q}(\Omega)\ge \lambda_{s,q}(B)\,\left(1+C_2\,\mathcal{A}(\Omega)\right)\,\left(\int_{\Omega_T} (u_\Omega-T)^q_+\,dx\right)^\frac{2}{q},
\end{equation}
We now estimate the $L^q$ norm of $(u_\Omega-T)_+$: by raising to the power $2$ the estimate \eqref{nonvuoto} and observing that $T\le T_0<1/2$, we get
\[
\left(\int_{\Omega} (u_\Omega-T)_+^q\,dx\right)^\frac{2}{q}\ge (1-T)^2\ge 1-2\,T_0\ge 1-\frac{C_2}{2\,(1+C_2)}\,\mathcal{A}(\Omega).
\]
We insert this estimate in \eqref{intermedio}, so to obtain
\[
\lambda_{s,q}(\Omega)\ge \lambda_{s,q}(B)\,\left(1+C_2\,\mathcal{A}(\Omega)\right)\,\left(1-\frac{C_2}{2\,(1+C_2)}\,\mathcal{A}(\Omega)\right).
\]
The right-hand side can be estimated as follows
\[
\begin{split}
\left(1+C_2\,\mathcal{A}(\Omega)\right)&\,\left(1-\frac{C_2}{2\,(1+C_2)}\,\mathcal{A}(\Omega)\right)\\
&=1+\left[C_2-\frac{C_2}{2\,(1+C_2)}\right]\,\mathcal{A}(\Omega)-\frac{C_2^2}{2\,(1+C_2)}\,\mathcal{A}(\Omega)^2\\
&\ge 1+\left[C_2-\frac{C_2}{2\,(1+C_2)}\,-\frac{C_2^2}{(1+C_2)}\right]\,\mathcal{A}(\Omega)\\
&=1+\frac{C_2}{2\,(1+C_2)}\,\mathcal{A}(\Omega),
\end{split}
\]
thus we eventually get
\[
\lambda_{s,q}(\Omega)-\lambda_{s,q}(B)\ge \frac{C_2\,\lambda_{s,q}(B)}{2\,(1+C_2)}\,\mathcal{A}(\Omega)\ge \left(\frac{C_2\,\lambda_{s,q}(B)}{2^\frac{3}{s}\,(1+C_2)}\right)\,\mathcal{A}(\Omega)^\frac{3}{s},
\]
as desired.
\vskip.2cm\noindent
{\bf Case $T>T_0$.} If we set for simplicity
\begin{equation}
\label{z_0}
z_0=\left(\frac{\sqrt{\mathcal{A}(\Omega)\,|\Omega|}}{24\,\sqrt{2\,\beta_{N,s}\,\lambda_{s,q}(B)}}\,T\right)^\frac{1}{s},
\end{equation}
by assumption \eqref{restrizione}, we have
\begin{equation}
\label{z_01}
z_0< \left(\frac{\sqrt{\mathcal{A}(\Omega)\,|\Omega|}}{24\,\sqrt{\beta_{N,s}\,\lambda_{s,q}(\Omega)}}\,T\right)^\frac{1}{s}.
\end{equation}
We now want to use the enhanced P\'olya-Szeg\H{o}--type estimate of Proposition \ref{prop:PS}, in conjunction with Proposition \ref{prop:asimmetrie}.
Thus, we have
\[
\begin{split}
\lambda_{s,q}(\Omega)-\lambda_{s,q}(B)&\ge C_1\,\int_0^{+\infty} z^{1-2\,s}\,\left(\int_0^{+\infty} \mathcal{A}(E_{t,z})^2\,\frac{\left(\mu_{z}(t)^\frac{N-1}{N}\right)^2}{-\mu'_z(t)}\,dt\right)\,dz\\
&\ge C_1\,\int_0^{z_0} z^{1-2\,s}\,\left(\int_\frac{T}{4}^{\frac{3}{8}\,T} \mathcal{A}(E_{t,z})^2\,\frac{\left(\mu_{z}(t)^\frac{N-1}{N}\right)^2}{-\mu'_z(t)}\,dt\right)\,dz\\
&\ge \frac{C_1}{25}\,\mathcal{A}(\Omega)^2\,\int_0^{z_0} z^{1-2\,s}\,\left(\int_\frac{T}{4}^{\frac{3}{8}\,T}\frac{\left(\mu_{z}(t)^\frac{N-1}{N}\right)^2}{-\mu'_z(t)}\,dt\right)\,dz,
\end{split}
\]
where we used Proposition \ref{prop:asimmetrie} in the third inequality, which is possible thanks to \eqref{z_01}. 
\par
We observe that by using \eqref{eddai!} and the fact that $\mathcal{A}(\Omega)<2$, we get
\[
\mu_z(t)\ge 1-\frac{1}{3}\,\mathcal{A}(\Omega)>\frac{1}{3}, \qquad \mbox{ for every } \frac{T}{4}\le t\le \frac{3}{8}\,T,
\]
This in turn implies that
\[
\begin{split}
\lambda_{s,q}(\Omega)-\lambda_{s,q}(B)
&\ge \frac{C_1}{25}\,\left(\frac{1}{9}\right)^\frac{N-1}{N}\,\mathcal{A}(\Omega)^2\,\int_0^{z_0} z^{1-2\,s}\,\left(\int_\frac{T}{4}^{\frac{3}{8}\,T}\frac{1}{-\mu'_z(t)}\,dt\right)\,dz.
\end{split}
\]
In order to estimate the integral in $t$, we use Jensen's inequality
\[
\int_\frac{T}{4}^{\frac{3}{8}\,T}\frac{1}{-\mu'_z(t)}\,dt\ge \frac{T^2}{64}\,\frac{1}{\displaystyle\int_{\frac{T}{4}}^{\frac{3}{8}\,T} -\mu'_z(t)\,dt}\ge \frac{T^2}{64}\,\frac{1}{|E_{\frac{T}{4},z}|-|E_{\frac{3}{8}\,T,z}|}.
\]
By using \eqref{eddai!} with $t=T/4$ and $t=3\,T/8$, we get (recall that $|\Omega|=1$)
\[
|E_{\frac{T}{4},z}|-|E_{\frac{3}{8}\,T,z}|\le 1+\frac{1}{3}\,\mathcal{A}(\Omega)-\left(1-\frac{1}{3}\,\mathcal{A}(\Omega)\right)=\frac{2}{3}\,\mathcal{A}(\Omega).
\]
In conclusion, we obtain
\begin{equation}
\label{sgraffigna}
\begin{split}
\lambda_{s,q}(\Omega)-\lambda_{s,q}(B)&\ge \frac{3}{2}\,\frac{C_1}{25}\,\left(\frac{1}{9}\right)^\frac{N-1}{N}\,\mathcal{A}(\Omega)\,\frac{T^2}{64}\,\left(\int_0^{z_0} z^{1-2\,s}\, dz\right)\\
&=\frac{3}{2}\,\frac{C_1}{25}\,\left(\frac{1}{9}\right)^\frac{N-1}{N}\,\mathcal{A}(\Omega)\,\frac{T^2}{64}\,\frac{1}{2\,(1-s)}\,z_0^{2\,(1-s)}.
\end{split}
\end{equation}
By recalling the definition \eqref{z_0} of $z_0$ and that 
\[
T>T_0=\frac{C_2}{4\,(1+C_2)}\,\mathcal{A}(\Omega),
\] 
we get the desired conclusion in this case, as well. 

\begin{oss}
From the proof above, we can extract the following explicit value for $\sigma_1$
\[
\sigma_1=\min\left\{\frac{C_2}{1+C_2}\,\frac{(1-s)\,\lambda_{s,q}(B)}{2^\frac{3}{s}}, \frac{3}{256}\,\frac{C_1}{25}\,\left(\frac{1}{9}\right)^\frac{N-1}{N}\,\left(\frac{C_2}{4\,(1+C_2)}\right)^\frac{2}{s}\,\left(\frac{1}{2\cdot 576\,\beta_{N,s}\,\lambda_{s,q}(B)}\right)^\frac{1-s}{s}\right\},
\]
where $B$ is any ball with $|B|=1$. The constants $C_1=C_1(N,s)>0$ and $C_2=C_2(N,s,q)>0$ are given in \eqref{C1} and \eqref{C2}, respectively.
We then observe that: 
\begin{itemize}
\item by Remark \ref{oss:constants}
\[
\lim_{s\nearrow 1}\beta_{N,s}<+\infty,\qquad \mbox{ and }\qquad \lim_{s\nearrow 1} C_1=2\,N\,\omega_N^{1/N}\,\Theta_N\,\lim_{s\nearrow 1}\gamma_{N,s}>0;
\]
\item by definition
\[
\lim_{s\nearrow 1} C_2=\frac{1}{9}\,\left(\frac{2}{q}+\frac{2}{N}-1\right)>0;
\]
\item by Lemma \ref{lm:convergenza} below
\[
\lim_{s\nearrow 1} (1-s)\,\lambda_{s,q}(B)=\frac{\omega_N}{2}\,\lambda_{1,q}(B), 
\]
and 
\[
\lim_{s\nearrow 1} \Big(\lambda_{s,q}(B)\Big)^\frac{1-s}{s}=\lim_{s\nearrow 1} \exp\left(\frac{1-s}{s}\,\log\Big((1-s)\, \lambda_{s,q}(B)\Big)-\frac{1-s}{s}\,\log(1-s)\right)=1.
\]
\end{itemize}
This shows that $\sigma_1$ has the claimed stability property as $s\nearrow 1$.
\end{oss}

\subsection{Proof of Corollary \ref{coro:torsion}} We can suppose that $|\Omega|=1$. We then take $B$ a ball such that $|B|=|\Omega|=1$. Observe that if 
\[
\mathcal{T}_s(\Omega)\le \frac{1}{2}\,\mathcal{T}_s(B),
\] 
then
\[
\mathcal{T}_s(B)-\mathcal{T}_s(\Omega)\ge \frac{1}{2}\,\mathcal{T}_s(B)\ge \frac{\mathcal{T}_s(B)}{2^\frac{3+s}{s}}\,\mathcal{A}(\Omega)^\frac{3}{s}.
\]
As usual, we used that $\mathcal{A}(\Omega)<2$. This gives the desired stability estimate, under the standing assumption on $\mathcal{T}_s(\Omega)$. On the other hand, if
\begin{equation}
\label{torsionealta}
\mathcal{T}_s(\Omega)> \frac{1}{2}\,\mathcal{T}_s(B),
\end{equation}
we can use Theorem \ref{teo:main} with $q=1$
 \[
 \frac{1}{\mathcal{T}_s(\Omega)}-\frac{1}{\mathcal{T}_s(B)}\ge \frac{\sigma_1}{(1-s)}\,\mathcal{A}(\Omega)^\frac{3}{s},
 \]
 i.e.
 \[
 \frac{\mathcal{T}_s(B)-\mathcal{T}_s(\Omega)}{\mathcal{T}_s(B)\,\mathcal{T}_s(\Omega)}\ge \frac{\sigma_1}{(1-s)}\,\mathcal{A}(\Omega)^\frac{3}{s}.
 \]
By using \eqref{torsionealta}, we get
\[
\mathcal{T}_s(B)-\mathcal{T}_s(\Omega)\ge \frac{\sigma_1\,(\mathcal{T}_s(B))^2}{2\,(1-s)}\,\mathcal{A}(\Omega)^\frac{3}{s},
\]
which proves the stability estimate in this case, as well. The proof is complete.
\begin{oss}
An inspection of the proof shows that the constant $\sigma_2$ in Corollary \ref{coro:torsion} can be taken to be
\[
\sigma_2=\min\left\{\frac{1}{2^\frac{3+s}{s}}\,\frac{\mathcal{T}_s(B)}{1-s},\,\frac{\sigma_1}{2}\,\left(\frac{\mathcal{T}_s(B)}{1-s}\right)^2\right\}.
\]
where $B$ is a ball such that $|B|=1$. By observing that (see Lemma \ref{lm:convergenza})
\[
\lim_{s\nearrow 1}\frac{\mathcal{T}_s(B)}{1-s}=\frac{2}{\omega_N}\,\mathcal{T}_1(B),
\]
we get that the constant $\sigma_2$ has the claimed controlled behavior, as $s$ approaches $1$.
\end{oss}

\section{Smooth sets}
\label{sec:6}

In this section, we briefly explain how on smoother sets we can improve our quantitative estimate, by lowering the exponent on the Fraenkel asymmetry. We will use the same notation as before.
\par
We start by showing that when the trace has additional smoothness properties, we can upgrade the $L^2$ control of \eqref{lemma2} to an $L^\infty$ one.

\begin{lm}\label{C^s}
Let us suppose that $\varphi\in W^{s,2}(\mathbb{R}^N)\cap L^{(2^*_{1-s})'}(\mathbb{R}^N)$ is such that
\[
[\varphi]_{0,s}:=\sup_{x\in\mathbb{R}^N}\sup_{|h|>0} \left|\frac{\varphi(x+h)-\varphi(x)}{|h|^s}\right|<+\infty.
\]
Then we have
\[
\|U_\varphi(\cdot,z)-\varphi\|_{L^\infty(\mathbb{R}^N)}\le C\,[\varphi]_{0,s}\,z^s,
\]
for a constant $C=C(N,s)>0$.
\end{lm}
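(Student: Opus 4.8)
The plan is to repeat the proof of estimate \eqref{lemma2}, replacing the $L^2$-based Minkowski/H\"older argument by an elementary pointwise one. First I would recall that $P_z$ is a probability density: from the definition \eqref{beta} of $\beta_{N,s}$ one has
\[
\int_{\mathbb{R}^N} P_z(y)\,dy=\int_{\mathbb{R}^N} P_1(x)\,dx=1,\qquad \mbox{for every } z>0.
\]
Hence, from the representation \eqref{poiss}, for every $x\in\mathbb{R}^N$ and a.e.\ $z>0$ we may write
\[
U_\varphi(x,z)-\varphi(x)=\int_{\mathbb{R}^N} P_z(y)\,\bigl[\varphi(x-y)-\varphi(x)\bigr]\,dy,
\]
so that, invoking the definition of $[\varphi]_{0,s}$,
\[
|U_\varphi(x,z)-\varphi(x)|\le [\varphi]_{0,s}\int_{\mathbb{R}^N} P_z(y)\,|y|^s\,dy.
\]

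\textbf{Key computation.} Next I would evaluate the remaining integral by the same scaling used in the proof of \eqref{lemma2}: the change of variables $y=z\,x$ gives
\[
\int_{\mathbb{R}^N} P_z(y)\,|y|^s\,dy=\int_{\mathbb{R}^N} \frac{1}{z^N}\,P_1\!\left(\frac{y}{z}\right)|y|^s\,dy=z^s\int_{\mathbb{R}^N} P_1(x)\,|x|^s\,dx.
\]
Finally I would check that the constant $C:=\int_{\mathbb{R}^N} P_1(x)\,|x|^s\,dx$ is finite and depends only on $N$ and $s$: near the origin $P_1$ is bounded and $|x|^s$ is locally integrable, while at infinity $P_1(x)\,|x|^s\sim \beta_{N,s}\,|x|^{-N-s}$, which is integrable since $s>0$. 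Combining the three displays yields the asserted estimate with this explicit value of $C$.

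\textbf{Main obstacle.} There is essentially no serious difficulty here: the only points requiring (trivial) care are the normalization $\int_{\mathbb{R}^N}P_1=1$ and the integrability at infinity of $P_1(x)\,|x|^s$, both immediate from the explicit form of $P_1$ in \eqref{beta}. Should one also wish the constant to be uniform as $s\nearrow 1$ (in the spirit of the rest of the paper, though not claimed in the statement), it suffices to add that $\beta_{N,s}$ stays bounded by Remark \ref{oss:constants} and that $\int_{\mathbb{R}^N}|x|^s\,(1+|x|^2)^{-\frac{N+2s}{2}}\,dx$ remains bounded for $s$ close to $1$.
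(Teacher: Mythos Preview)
Your proof is correct and follows essentially the same route as the paper: write $U_\varphi(x,z)-\varphi(x)$ as an average against the probability kernel $P_z$, bound the integrand using the H\"older seminorm $[\varphi]_{0,s}$, and rescale to extract the factor $z^s$ with $C=\int_{\mathbb{R}^N} P_1(w)\,|w|^s\,dw$. Your additional remarks on the finiteness of $C$ and its uniformity as $s\nearrow 1$ are valid refinements that the paper leaves implicit.
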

\begin{proof}
By using that the Poisson kernel has integral equal to $1$, we have
\[
\begin{split}
|U_\varphi(x,z)-\varphi(x)|&=\left|\int_{\mathbb{R}^{N}} P_z(y)\,[\varphi(x-y)-\varphi(x)]\,dy\right|\\
&\le\int_{\mathbb{R}^N} P_z(y)\,|\varphi(x-y)-\varphi(x)|\,dy\\
&\le [\varphi]_{0,s}\,\int_{\mathbb{R}^N}P_z(y)\,|y|^s\,dy=z^s\,[\varphi]_{0,s}\,\int_{\mathbb{R}^N} P_1(w)\,|w|^s\,dw.
\end{split}
\]
By defining
\[
C=\int_{\mathbb{R}^N} P_1(w)\,|w|^s\,dw,
\]
we get the desired conclusion.
\end{proof}

\begin{lm}[Closeness of level sets, $L^\infty$ case]\label{L-infty}
Let $\Omega\subset\mathbb{R}^N$ be an open bounded set and let $0<s<1$. Let us suppose that there exists a constant $C>0$ such that
\begin{equation}
\label{stima}
\|U_\Omega(\cdot,z)-u_\Omega\|_{L^\infty(\mathbb{R}^N)}\le C\,z^{s},\qquad \mbox{ for } z>0.
\end{equation}
We fix $T>0$ as in \eqref{livellone}, then  
\[
\mbox{ for every }\quad \frac{T}{4}\le t\le \frac{3}{8}\,T\quad \mbox{ and every }\quad   0<z\le \left(\frac{T}{8\,C}\right)^\frac{1}{s},
\]
we have
\[
\left\{x\in\Omega\, :\, u_\Omega(x)>\frac{T}{2}\right\}\subset E_{t,z}\subset \left\{x\in\Omega\, :\, u_\Omega(x)>\frac{T}{8}\right\}.
\]
In particular, for $t$ and $z$ as above, it holds $E_{t,z}\subset \Omega$ with
\[
|\Omega|-|E_{t,z}|\le \frac{1}{9}\,|\Omega|\,\mathcal{A}(\Omega),
\]
and
\[
\mathcal{A}\big(E_{t,z}\big)\ge \frac{7}{11}\,\mathcal{A}(\Omega).
\]
\end{lm}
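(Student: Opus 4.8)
The strategy is to repeat the argument behind Lemma~\ref{lm:stimaL1} and Proposition~\ref{prop:asimmetrie}, but feeding in the pointwise bound \eqref{stima} in place of the $L^2$ bound \eqref{lemma2}. Since \eqref{stima} is an $L^\infty$ (hence, after noting continuity, genuinely pointwise) estimate, the Markov--Chebyshev step of Lemma~\ref{lm:stimaL1} is no longer needed, and the inclusions below become honest set inclusions. Concretely: for $0<z\le (T/(8\,C))^{1/s}$ one has $C\,z^s\le T/8$, so \eqref{stima} gives
\[
|U_\Omega(x,z)-u_\Omega(x)|\le \frac{T}{8}\qquad \mbox{for every }x\in\mathbb{R}^N .
\]
This holds at every point, not merely almost everywhere, because $U_\Omega(\cdot,z)=P_z\ast u_\Omega$ is continuous --- it is the convolution of the bounded continuous function $u_\Omega$ from Lemma~\ref{lm:eigenfunction} with the smooth kernel $P_z$ --- and $u_\Omega$ is continuous.

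Next I would read off the two inclusions. If $u_\Omega(x)>T/2$, then $U_\Omega(x,z)>T/2-T/8=3\,T/8\ge t$ for all $t\le 3\,T/8$, so $x\in E_{t,z}$; this is the first inclusion. If instead $x\in E_{t,z}$, i.e. $U_\Omega(x,z)>t\ge T/4$, then $u_\Omega(x)>T/4-T/8=T/8>0$, hence $x\in\Omega$ (as $u_\Omega=0$ off $\Omega$) and $u_\Omega(x)>T/8$; this is the second inclusion, and in particular $E_{t,z}\subset\{x\in\Omega:u_\Omega(x)>T/8\}\subset\Omega$.

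The measure estimate is then immediate: since $\{x\in\Omega:u_\Omega(x)>T/2\}\subset E_{t,z}\subset\Omega$,
\[
|\Omega|-|E_{t,z}|\le |\Omega|-\big|\{x\in\Omega:u_\Omega(x)>T/2\}\big|\le \frac{1}{9}\,|\Omega|\,\mathcal{A}(\Omega),
\]
the last inequality being the defining property \eqref{livellone} of $T$, applied at the level $T/2<T$ (equivalently, this is \eqref{ciao}). For the asymmetry, since $E_{t,z}\subset\Omega$ we have $\Omega\,\Delta\,E_{t,z}=\Omega\setminus E_{t,z}$, so the display above reads $|\Omega\,\Delta\,E_{t,z}|/|\Omega|\le \tfrac{1}{9}\,\mathcal{A}(\Omega)$; then Lemma~\ref{lm:asimmetria} with $\gamma=1/9<1/2$ (and $c_\gamma\le 1+2\,\gamma$, or the sharper $c_\gamma=1$ since $|E_{t,z}\setminus\Omega|=0$) gives $\mathcal{A}(E_{t,z})\ge\tfrac{7}{11}\,\mathcal{A}(\Omega)$ --- in fact $\ge\tfrac{7}{9}\,\mathcal{A}(\Omega)$.

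There is no real obstacle here: the only point that deserves a word is the upgrade of the $L^\infty$-\emph{norm} inequality \eqref{stima} to a pointwise inequality valid at \emph{every} $x\in\mathbb{R}^N$, so that the level-set inclusions are literal and not just valid modulo null sets --- this is why I would first record the continuity of $U_\Omega(\cdot,z)$ and of $u_\Omega$. Everything else is a verbatim transcription of the local-case computation.
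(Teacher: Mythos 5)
Your proof is correct and follows essentially the same route as the paper: use $C\,z^s\le T/8$ to turn \eqref{stima} into the two set inclusions, then deduce the measure bound from \eqref{livellone} (i.e.\ \eqref{ciao}) and apply Lemma \ref{lm:asimmetria} with $\gamma=1/9$ (your observation that $c_\gamma=1$ gives the sharper factor $7/9$ is also consistent with that lemma). The only cosmetic difference is your explicit remark on upgrading the $L^\infty$ bound to a pointwise one via continuity, which the paper passes over silently and which in any case is immaterial for the measure-theoretic conclusions.
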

\begin{proof}
We take a point $x\in \Omega$ such that $u_\Omega(x)>T/2$.
By using \eqref{stima}, we get for every $0<z<(T/(8\,C))^{1/s}$
\[
U_\Omega(x,z)\ge u_\Omega(x)-C\,z^s>\frac{T}{2}-C\,z^s\ge \frac{3}{8}\,T.
\]
This shows that for every $T/4\le t\le 3\,T/8$, we have
\[
\left\{x\in\Omega\, :\, u_\Omega(x)>T/2\right\}\subset E_{\frac{3}{8}\,T,z}\subset E_{t,z},
\]
where for the second inclusion we used the monotonicity of the level sets. This shows the validity of the first claimed inclusion.
\par
We now take $x\in\mathbb{R}^N$ such that $U_\Omega(x,z)>t$ 
and use again \eqref{stima}. We get for every $0<z<(T/(8\,C))^{1/s}$
\[
u_\Omega(x)\ge U_\Omega(x,z)-C\,z^s>\frac{T}{4}-C\,z^s\ge \frac{T}{8}.
\]
This shows that for every $T/4\le t\le 3\,T/8$
\[
E_{t,z}\subset \left\{x\in\Omega\, :\, u_\Omega(x)>\frac{T}{8}\right\},
\]
as desired.
\par
In order to prove the lower bound on the asymmetry of $E_{t,z}$, it is sufficient to reproduce the proof of \eqref{eddai!} and observe that this time
\[
|\Omega_{T/2}\setminus E_{t,z}|=|E_{t,z}\setminus \Omega_{T/8}|=0.
\]
This gives
\[
\frac{|E_{t,z}\Delta \Omega|}{|\Omega|}\le \frac{|\Omega\setminus \Omega_{T/2}|}{|\Omega|}\le \frac{1}{9}\,\mathcal{A}(\Omega),
\]
thanks to the choice \eqref{livellone} of $T$. We now get the conclusion by applying Lemma \ref{lm:asimmetria} with $\gamma=1/9$.
\end{proof}

Then for regular sets $\Omega\subset\mathbb{R}^N$ we can slightly improve the exponent on the asymmetry in our quantitative estimate, according to the following

\begin{teo}\label{teo:smooth}
Let $N\ge 2$, $0<s<1$ and $1\le q<2^*_s$. Let $\Omega\subset\mathbb{R}^N$ be an open bounded set, satisfying one of the following conditions:
\begin{itemize}
\item[A.] either $\partial\Omega$ is Lipschitz and $\Omega$ satisfies the exterior ball condition, with radius $\rho$;
\vskip.2cm
\item[B.] or $\partial \Omega$ is $C^{1,\alpha}$, for some $0<\alpha<1$.
\end{itemize} 
Then we have
\[
|\Omega|^{\frac{2}{q}-1+\frac{2\,s}{N}}\,\lambda_{s,q}(\Omega)-|B|^{\frac{2}{q}-1+\frac{2\,s}{N}}\,\lambda_{s,q}(B)\ge \frac{C}{1-s}\,\mathcal{A}(\Omega)^{2+\frac{1}{s}},
\]
for a constant $C>0$ depending on $N,\,s,\,q$ and $\rho$ and the Lipschitz constant of $\partial \Omega$ (case {\rm A}) or the $C^{1,\alpha}$ norm of $\partial\Omega$ (case {\rm B}).
\end{teo}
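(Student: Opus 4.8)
\subsection*{Proof proposal}

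The plan is to repeat the proof of Theorem~\ref{teo:main} in the regime $T>T_0$, but to upgrade the two ingredients responsible for the loss there—the $L^2$-closeness of level sets of Proposition~\ref{prop:asimmetrie}, and the crude lower bound $T\gtrsim\mathcal A(\Omega)$ on the threshold \eqref{livellone}—to $L^\infty$-type statements that are available once $\partial\Omega$ is regular. Two regularity facts are needed, both valid under assumption {\rm A} or {\rm B} with constants controlled by $N,s,q$ and the geometric data. First, the minimizer $u_\Omega$ of Lemma~\ref{lm:eigenfunction} is globally $s$-H\"older continuous, $[u_\Omega]_{0,s}<+\infty$; this is the boundary regularity theory for the fractional Dirichlet problem on Lipschitz domains with the exterior ball condition (resp.\ on $C^{1,\alpha}$ domains). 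Hence the hypothesis of Lemma~\ref{C^s} holds and \eqref{stima} is satisfied with a constant depending only on $N,s$ and $[u_\Omega]_{0,s}$. Second, a Hopf-type lower bound holds, $u_\Omega(x)\ge c\,d_\Omega(x)^s$ for every $x\in\Omega$ (with $d_\Omega$ the distance to $\partial\Omega$), with $c>0$ depending on the same data; under {\rm A} this follows by comparison with a barrier built on the exterior ball, and under {\rm B} from the finer boundary regularity. I would record these at the start of the proof.

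Granting this, Lemma~\ref{L-infty} applies to $u_\Omega$ and shows that, for the level $T$ of \eqref{livellone} and for every $\frac T4\le t\le\frac38 T$ and $0<z\le(T/(8C))^{1/s}$, the set $E_{t,z}=\{U_\Omega(\cdot,z)>t\}$ is contained in $\Omega$, satisfies $|\Omega|-|E_{t,z}|\le\frac19|\Omega|\,\mathcal A(\Omega)$ and $\mathcal A(E_{t,z})\ge\frac7{11}\mathcal A(\Omega)$. Inserting this into the enhanced P\'olya--Szeg\H o estimate of Proposition~\ref{prop:PS}, restricting the $z$-integral to $(0,(T/(8C))^{1/s})$ and the $t$-integral to $(\frac T4,\frac38 T)$, and estimating the inner integral by Jensen's inequality exactly as in the proof of Theorem~\ref{teo:main}—now with $\mu_z(t)\ge\frac79|\Omega|$ and $|E_{T/4,z}|-|E_{\frac38 T,z}|\le\frac19|\Omega|\,\mathcal A(\Omega)$ in the role of \eqref{eddai!}—one reaches, after normalizing $|\Omega|=1$,
\[
\lambda_{s,q}(\Omega)-\lambda_{s,q}(B)\ \ge\ \frac{c_0}{1-s}\,\mathcal A(\Omega)\,T^{\,2/s},\qquad c_0=c_0(N,s,[u_\Omega]_{0,s})>0.
\]
The exponent of $T$ is now exactly $2/s$, instead of the larger value produced in Theorem~\ref{teo:main}, because the admissible range of $z$ reaches $\sim T^{1/s}$ rather than $\sim(\sqrt{\mathcal A(\Omega)}\,T)^{1/s}$; this widening of the $z$-interval is the whole gain. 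As in the proof of Theorem~\ref{teo:main}, the case $\lambda_{s,q}(\Omega)>2\,\lambda_{s,q}(B)$ is trivial from $\mathcal A(\Omega)<2$, so one may assume $\lambda_{s,q}(\Omega)\le2\,\lambda_{s,q}(B)$, which keeps $[u_\Omega]_{0,s}$, hence $c_0$, bounded in terms of the data alone.

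It then remains to bound $T$ from below. By the Hopf estimate, $\{x\in\Omega:u_\Omega(x)\le t\}$ lies in the $(t/c)^{1/s}$-neighbourhood of $\partial\Omega$ inside $\Omega$, whose measure is at most $C_\Omega(t/c)^{1/s}$ for $t$ small, with $C_\Omega$ depending on $\mathcal H^{N-1}(\partial\Omega)$ and the geometry. Imposing $C_\Omega(t/c)^{1/s}\le\frac19|\Omega|\,\mathcal A(\Omega)$ and recalling \eqref{livellone} gives $T\ge c_1\,\mathcal A(\Omega)^s$ for a suitable $c_1>0$. Substituting into the displayed inequality,
\[
\lambda_{s,q}(\Omega)-\lambda_{s,q}(B)\ \ge\ \frac{c_0\,c_1^{2/s}}{1-s}\,\mathcal A(\Omega)^{3}\ \ge\ \frac{C}{1-s}\,\mathcal A(\Omega)^{\,2+\frac1s},
\]
where the last step uses $\mathcal A(\Omega)<2$ (so $\mathcal A(\Omega)^3\ge 2^{1-1/s}\mathcal A(\Omega)^{2+1/s}$), and scale invariance removes the normalization. (This actually produces the better exponent $3$; the statement with $2+\tfrac1s$ is the version that interpolates correctly to the local result at $s=1$.)

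The main obstacle is the regularity input of the first paragraph: obtaining the global $C^s$ bound on $u_\Omega$ and the Hopf estimate $u_\Omega\ge c\,d_\Omega^s$ \emph{with the constants quantitatively controlled} by $N,s,q$ and by $\rho$ together with the Lipschitz constant (case {\rm A}), resp.\ by the $C^{1,\alpha}$ norm (case {\rm B})—assumption {\rm A} being rather weak. Everything else is a transcription of the proof of Theorem~\ref{teo:main}, with Proposition~\ref{prop:asimmetrie} replaced by Lemma~\ref{L-infty} and with the improved lower bound on $T$; in particular no new symmetrization or extension arguments are required.
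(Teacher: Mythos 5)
Your first half is exactly the paper's argument: after reducing to \eqref{restrizione} so that the global $C^s$ bound on $u_\Omega$ from \cite{RS} (case A) or \cite{RS2} (case B) depends only on the admissible data, Lemma \ref{C^s} gives \eqref{stima}, Lemma \ref{L-infty} replaces Lemma \ref{lm:stimaL1} and Proposition \ref{prop:asimmetrie}, and Proposition \ref{prop:PS}, restricted to $t\in[T/4,3T/8]$ and $0<z\le (T/(8C))^{1/s}$ and combined with Jensen's inequality, yields
\[
\lambda_{s,q}(\Omega)-\lambda_{s,q}(B)\ \ge\ \frac{c}{1-s}\,\mathcal{A}(\Omega)\,T^{2/s}.
\]
The paper stops here and concludes directly from $T>T_0\simeq \mathcal{A}(\Omega)$, treating the case $T\le T_0$ separately as in Theorem \ref{teo:main}; it uses no lower barrier at all. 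Your proposal deviates precisely at this point: you invoke a Hopf-type bound $u_\Omega\ge c\,d_\Omega^s$ (with $d_\Omega$ the distance to $\partial\Omega$), deduce $|\{u_\Omega\le t\}|\le C_\Omega\,(t/c)^{1/s}$, hence $T\ge c_1\,\mathcal{A}(\Omega)^s$, and land on the exponent $3$. You are right that $T\gtrsim\mathcal{A}(\Omega)$ alone only turns the display above into $\mathcal{A}(\Omega)^{1+2/s}$, so in your scheme the Hopf step is not an optional refinement but the step that carries the claimed exponent.

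That step is where the proposal breaks down, at least under assumption A. The exterior ball condition produces a supersolution barrier and hence the \emph{upper} bound $u_\Omega\le C\,d_\Omega^s$ (this is what \cite{RS} exploits for the $C^s$ estimate); a Hopf-type \emph{lower} bound needs an interior ball or comparable interior smoothness, and it is genuinely false for Lipschitz sets satisfying the exterior ball condition: near a convex corner (a square, say) the first eigenfunction behaves along the bisector like $d_\Omega^\gamma$ with $\gamma>s$, so $u_\Omega/d_\Omega^s\to 0$ there and no constant $c>0$ as you claim can exist. Moreover, even where a two-sided bound $u_\Omega\asymp d_\Omega^s$ is available, your constants $C_\Omega$ and $c$ involve $\mathcal{H}^{N-1}(\partial\Omega)$ and the Hopf constant, neither of which is controlled by the quantities allowed in the statement ($N$, $s$, $q$, $\rho$ and the Lipschitz constant, resp.\ the $C^{1,\alpha}$ norm); and in case B a quantitative Hopf lemma in $C^{1,\alpha}$ domains, with constants depending only on those data and on the normalization of $u_\Omega$, is asserted but not proved --- you correctly flag it as the main obstacle, but it remains unfilled. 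So the proof is complete only modulo these unproved (and, for case A, false as stated) inputs, whereas the paper's own proof avoids them entirely by concluding from $T>T_0$.
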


\begin{proof}

We start by observing that, under the standing assumptions, $u_\Omega$ is a function of class $C^s(\mathbb R^N)$, thanks to \cite[Proposition 1.1]{RS} (when assumption A is in force) or by \cite[Proposition 1.1]{RS2} (if assumption B is taken). Moreover, the $C^s-$norm of $u_\Omega$ is bounded by a constant that depends on $N,\,s,\,q$, on the relevant regularity parameters of $\partial \Omega$ and on $\lambda_{s,q}(\Omega)$.
Hence, by Lemma \ref{C^s}, we have that assumption \eqref{stima} of Lemma \ref{L-infty} is satisfied with a constant $C$ which depends on the above quantities. As before, it is sufficient to perform the proof under the restriction \eqref{restrizione}, thus we observe that the dependence on $\lambda_{s,q}(\Omega)$ can be removed.
\par
We proceed now as in the proof of Theorem \ref{teo:main}, by using the same notations. The case $T \le T_0$ follows exactly as before.

For the case $T>T_0$, we use Lemma \ref{L-infty} (in place of Lemma \ref{lm:stimaL1}) and we set 
\[
z_1=\left(\frac{T}{8\,C}\right)^{\frac{1}{s}}.
\]
By proceeding as before, we now get
 \[
\begin{split}
\lambda_{s,q}(\Omega)-\lambda_{s,q}(B)&\ge C_1\,\int_0^{z_1} z^{1-2\,s}\,\left(\int_\frac{T}{4}^{\frac{3}{8}\,T} \mathcal{A}(E_{t,z})^2\,\frac{\left(\mu_{z}(t)^\frac{N-1}{N}\right)^2}{-\mu'_z(t)}\,dt\right)\,dz\\
&\ge \frac{49\,C_1}{121}\,\mathcal{A}(\Omega)^2\,\int_0^{z_1} z^{1-2\,s}\,\left(\int_\frac{T}{4}^{\frac{3}{8}\,T}\frac{\left(\mu_{z}(t)^\frac{N-1}{N}\right)^2}{-\mu'_z(t)}\,dt\right)\,dz.
\end{split}
\]
By arguing as in the proof of Theorem \ref{teo:main}, we deduce that 
\[
\begin{split}
\lambda_{s,q}(\Omega)-\lambda_{s,q}(B)&\ge C\,\mathcal{A}(\Omega)\,T^2\,\left(\int_0^{z_1} z^{1-2\,s}\, dz\right)\\
&\ge \frac{C}{2\,(1-s)}\,\mathcal{A}(\Omega)\,T^2_0\,z_1^{2\,(1-s)}. 
\end{split}
\]
By recalling the definitions of $z_1$ and $T_0$, we get the conclusion.  
\end{proof}
\begin{oss}
Observe that, differently from the proof of Theorem \ref{teo:main}, the level $z_1$ does not depend on the asymmetry itself. This explains why the resulting exponent on $\mathcal{A}(\Omega)$ is smaller. Also observe that even this improved exponent converges to $3$, as $s$ goes to $1$.
\end{oss}
\begin{oss}[Fractional torsional rigidity]
By taking $q=1$ in Theorem \ref{teo:smooth} and recalling that
\[
\mathcal{T}_s(\Omega)=\frac{1}{\lambda_{s,1}(\Omega)},
\]
we can obtain
\begin{equation}
\label{kim}
\frac{\mathcal{T}_s(B)}{|B|^\frac{N+2\,s}{N}}-\frac{\mathcal{T}_s(\Omega)}{|\Omega|^\frac{N+2\,s}{N}}\ge C'\,\mathcal{A}(\Omega)^{2+\frac{1}{s}},
\end{equation} 
for every open bounded set $\Omega\subset\mathbb{R}^N$ which satisfies the assumptions of Theorem \ref{teo:smooth}. 
It is sufficient to repeat the proof of Corollary \ref{coro:torsion} and use Theorem \ref{teo:smooth}, in place of Theorem \ref{teo:main}.
\par
We point out that, after the completion of this paper, the manuscript \cite{Ki} appeared. There the author proved the very same estimate \eqref{kim}, without any further regularity assumption on $\Omega$, see \cite[Theorem 1.8]{Ki}. The proof in \cite{Ki} starts from our estimate \eqref{sgraffigna} for $q=1$ and exploits the possibility of writing $\mathcal{T}_s(\Omega)$ as an integral of the distribution function $t\mapsto|\{x\, :\, u_\Omega>t\}|$, in order to get a (slightly) better control in terms of $\mathcal{A}(\Omega)$. However, this approach can not be generalized to cover all the range $1<q<2^*_s$.
\end{oss}

\appendix

\section{Asymptotics for the Poincar\'e-Sobolev constant}

In the next result, we use $2^*$ to denote the usual Sobolev exponent, i.e.
\[
2^*=\left\{\begin{array}{cc}
\dfrac{2\,N}{N-2},&\mbox{ if }N\ge 3,\\
&\\
+\infty,& \mbox{ if }N=2.
\end{array}
\right.
\]
For $1\le q<2^*$, we recall the notation
\[
\lambda_{1,q}(\Omega)=\min_{u\in \mathcal{D}^{1,2}_0(\Omega)} \Big\{\|\nabla u\|_{L^2(\Omega)}^2\, :\, \|u\|_{L^q(\Omega)}=1\Big\}.
\]
\begin{lm}
\label{lm:convergenza}
Let $N\ge 2$ and $1\le q<2^*$, then for every $\Omega\subset\mathbb{R}^N$ open bounded set, we have 
\begin{equation}
\label{limsup}
\limsup_{s\nearrow 1} (1-s)\,\lambda_{s,q}(\Omega)\le \frac{\omega_N}{2}\, \lambda_{1,q}(\Omega).
\end{equation}
If in addition $\Omega$ has Lipschitz boundary, then
\begin{equation}
\label{lim}
\lim_{s\nearrow 1} (1-s)\,\lambda_{s,q}(\Omega)=\frac{\omega_N}{2}\,\lambda_{1,q}(\Omega).
\end{equation}
\end{lm}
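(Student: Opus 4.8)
The plan is to prove \eqref{limsup} and the matching lower bound separately: the former for every bounded open set via test functions, the latter under the Lipschitz hypothesis via a compactness argument of Bourgain--Brezis--Mironescu type.

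For \eqref{limsup}, I would fix $u\in C^\infty_0(\Omega)$ with $\|u\|_{L^q(\Omega)}=1$. Then $u$ is admissible in the variational problem of Lemma \ref{lm:eigenfunction}, so $\lambda_{s,q}(\Omega)\le[u]^2_{W^{s,2}(\mathbb{R}^N)}$; multiplying by $(1-s)$ and letting $s\nearrow 1$, the classical pointwise Bourgain--Brezis--Mironescu asymptotics \cite{bourgain} give
\[
\limsup_{s\nearrow 1}(1-s)\,\lambda_{s,q}(\Omega)\le\lim_{s\nearrow 1}(1-s)\,[u]^2_{W^{s,2}(\mathbb{R}^N)}=\frac{\omega_N}{2}\int_{\mathbb{R}^N}|\nabla u|^2\,dx .
\]
Since for a bounded open set $\mathcal{D}^{1,2}_0(\Omega)$ embeds continuously into $L^q(\Omega)$, one has $\lambda_{1,q}(\Omega)=\inf\{\|\nabla u\|^2_{L^2}:u\in C^\infty_0(\Omega),\ \|u\|_{L^q(\Omega)}=1\}$, and taking the infimum over such $u$ above yields \eqref{limsup}.

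For the reverse inequality when $\partial\Omega$ is Lipschitz, I would argue by compactness. Let $u_s$ be the minimizer for $\lambda_{s,q}(\Omega)$ provided by Lemma \ref{lm:eigenfunction}, extended by zero, so that $\|u_s\|_{L^q(\Omega)}=1$ and $[u_s]^2_{W^{s,2}(\mathbb{R}^N)}=\lambda_{s,q}(\Omega)$; by \eqref{limsup} the quantities $(1-s)\lambda_{s,q}(\Omega)$ stay bounded as $s\nearrow 1$. Using the fractional Sobolev inequality together with the known behaviour of its sharp constant as $s\nearrow 1$, the $u_s$ are bounded in $L^{2^*_s}(\mathbb{R}^N)$ for $s$ close to $1$, hence (directly by H\"older if $q\ge 2$, by interpolation with that bound if $q<2$) bounded in $L^2(B_R)$ for a fixed ball $B_R\supset\supset\Omega$. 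Pick $s_k\nearrow 1$ realizing $\liminf_{s\nearrow 1}(1-s)\lambda_{s,q}(\Omega)$; the Bourgain--Brezis--Mironescu compactness theorem on the smooth domain $B_R$ gives, up to a subsequence, $u_{s_k}\to u$ in $L^2(B_R)$ with $u\in W^{1,2}(B_R)$ and
\[
\liminf_{k\to\infty}(1-s_k)\,[u_{s_k}]^2_{W^{s_k,2}(B_R)}\ge\frac{\omega_N}{2}\int_{B_R}|\nabla u|^2\,dx .
\]
Since $[u_{s_k}]^2_{W^{s_k,2}(B_R)}\le[u_{s_k}]^2_{W^{s_k,2}(\mathbb{R}^N)}=\lambda_{s_k,q}(\Omega)$, the same lower bound holds with $\lambda_{s_k,q}(\Omega)$ on the left. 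As $u_{s_k}=0$ a.e. in $B_R\setminus\Omega$ and $\partial\Omega$ is Lipschitz, $u\in\mathcal{D}^{1,2}_0(\Omega)$; and the uniform $L^{2^*_s}$ bound (with $q<2^*_{s_k}$ for $k$ large) makes $\{|u_{s_k}|^q\}$ equi-integrable on $\Omega$, so $u_{s_k}\to u$ in $L^q(\Omega)$ and $\|u\|_{L^q(\Omega)}=1$. Hence $\int_\Omega|\nabla u|^2\ge\lambda_{1,q}(\Omega)$ and $\liminf_{s\nearrow 1}(1-s)\lambda_{s,q}(\Omega)\ge\frac{\omega_N}{2}\lambda_{1,q}(\Omega)$, which with \eqref{limsup} proves \eqref{lim}.

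The hard part will be the compactness step: invoking (or reproving) the Bourgain--Brezis--Mironescu compactness and lower semicontinuity on $B_R$ with the precise constant $\omega_N/2$, and then handling two delicate points — that a $W^{1,2}(B_R)$ function vanishing a.e. off the Lipschitz set $\Omega$ lies in $\mathcal{D}^{1,2}_0(\Omega)$ (this is exactly where the Lipschitz regularity enters, and it fails for rough $\Omega$, which is why \eqref{lim} needs the extra hypothesis), and that the constraint $\|u_{s_k}\|_{L^q(\Omega)}=1$ survives in the limit, which for $q>2$ is not automatic from $L^2$-convergence and forces the uniform fractional Sobolev bound.
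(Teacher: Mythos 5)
Your overall strategy is the same as the paper's: \eqref{limsup} by testing with a fixed $\varphi\in C^\infty_0(\Omega)$ and the Bourgain--Brezis--Mironescu limit \cite{bourgain}, and the lower bound by compactness of the minimizers along a sequence $s_k\nearrow 1$ plus a $\Gamma$--liminf inequality with the constant $\omega_N/2$, the Lipschitz hypothesis entering exactly where you say it does, namely to guarantee that the $L^2$--limit, which vanishes a.e.\ outside $\Omega$, belongs to $\mathcal{D}^{1,2}_0(\Omega)$. The only packaging difference is that the paper delegates the compactness, the membership of the limit in $\mathcal{D}^{1,2}_0(\Omega)$ and the liminf inequality to \cite[Lemma 3.10 \& Proposition 3.11]{BPS}, while you invoke the original BBM theorems on a ball $B_R\supset\supset\Omega$ together with the standard fact that an $H^1(B_R)$ function vanishing a.e.\ off a Lipschitz set lies in $H^1_0(\Omega)$; that is a perfectly legitimate equivalent route, and you also correctly upgrade $L^2$-- to $L^q$--convergence to preserve the constraint.

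The one step that does not hold up as written is the claim that the minimizers are bounded in $L^{2^*_s}(\mathbb{R}^N)$ uniformly as $s\nearrow 1$ ``by the fractional Sobolev inequality and the known behaviour of its sharp constant''. This is correct for $N\ge 3$, where the sharp constant grows like $(1-s)^{-1}$ and hence $\|u_s\|^2_{L^{2^*_s}}\lesssim (1-s)\,\lambda_{s,q}(\Omega)$ stays bounded. But the lemma includes $N=2$: there $2^*_s=2/(1-s)\to\infty$ and the embedding constant degenerates (the factor $\Gamma\!\left(\frac{N-2s}{2}\right)=\Gamma(1-s)$ blows up), so one only gets $\|u_s\|^2_{L^{2^*_s}}\lesssim \lambda_{s,q}(\Omega)\sim (1-s)^{-1}$, which is useless. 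Since for $N=2$ the exponent $q$ ranges over all of $[1,\infty)$, this affects both places where you use the bound: the $L^2(B_R)$ bound needed for BBM compactness when $q<2$, and the equi-integrability of $|u_{s_k}|^q$ needed to pass the constraint to the limit when $q>2$. The repair is exactly the device the paper uses: work with a \emph{fixed} intermediate exponent $\beta<1$ (chosen so that $2^*_\beta\ge q$) and combine the embedding $W^{\beta,2}\hookrightarrow L^{2^*_\beta}$ with the interpolation inequality $[v]^2_{W^{\beta,2}}\le C\,\frac{s}{s-\beta}\,\|v\|_{L^2}^{2(1-\beta/s)}\bigl((1-s)\,[v]^2_{W^{s,2}}\bigr)^{\beta/s}$ of \cite[Proposition 2.1]{BPS}; for $q>2$ the required $L^2$ bound is free from H\"older, while for $q<2$ one first needs a uniformly $(1-s)$--scaled Poincar\'e bound $\|u_s\|^2_{L^2}\le C\,(1-s)\,[u_s]^2_{W^{s,2}}$ for functions supported in the bounded set $\Omega$ (again obtainable by such interpolation arguments), after which $L^q$--convergence follows from $L^2$--convergence by H\"older alone. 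With this adjustment your argument closes and is in substance the proof given in the paper.
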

\begin{proof}
In order to prove \eqref{limsup}, it is sufficient to use the Bourgain-Brezis-Mironescu convergence result
\[
\lim_{s\nearrow 1}(1-s)\,[\varphi]^2_{W^{s,2}(\mathbb{R}^N)}=\frac{\omega_N}{2}\,\int_\Omega |\nabla \varphi|^2\,dx,\qquad \mbox{ for every } \varphi\in C^\infty_0(\Omega),
\]
see \cite{bourgain}. 
Indeed, by taking $\varphi\in C^\infty_0(\Omega)$ with unit $L^q$ norm and using the definition of $\lambda_{s,q}(\Omega)$, from the previous formula we get
\[
\limsup_{s\nearrow 1}(1-s)\,\lambda_{s,q}(\Omega)\le \lim_{s\nearrow 1}(1-s)\,[\varphi]^2_{W^{s,2}(\mathbb{R}^N)}=\frac{\omega_N}{2}\,\int_\Omega |\nabla \varphi|^2\,dx.
\]
By taking the infimum over all admissible $\varphi$, we get \eqref{limsup}.
\vskip.2cm\noindent 
We now show \eqref{lim}. The case $q=2$ is already contained in \cite[Theorem 1.2]{BPS}, we thus treat the case $q\not =2$.  We take $1\le q<2^*$ and fix
\[
\delta=\frac{1}{q}-\left(\frac{1}{2}-\frac{1}{N}\right)>0.
\] 
We then observe that
\[
s_0:=1-N\,\delta<s<1\qquad \Longrightarrow\qquad q<2^*_s.
\]
From now on, we thus work with $s_0<s<1$.
 We start by observing that \cite[Corollary 2.2]{BPS} entails
\[
s\,(1-s)\,[u]^2_{W^{s,2}(\mathbb{R}^N)}\le  C\,\|u\|_{L^2(\Omega)}^{2\,(1-s)}\,\|\nabla u\|_{L^2(\Omega)}^{2\,s},\qquad \mbox{ for every } u\in C^\infty_0(\Omega),
\]
for some $C=C(N)>0$.
In particular, by using the definition of $\lambda_{s,q}(\Omega)$, we get
\begin{equation}
\label{eccoci}
s\,(1-s)\,\lambda_{s,q}(\Omega)\,\|u\|_{L^q(\Omega)}^2\le C\,\|u\|_{L^2(\Omega)}^{2\,(1-s)}\,\|\nabla u\|_{L^2(\Omega)}^{2\,s}.
\end{equation}
We have to distinguish two cases: 
\begin{itemize}
\item if $\boxed{1\le q<2}$, we use in \eqref{eccoci} the Gagliardo-Nirenberg inequality
\[
\|u\|_{L^2(\Omega)}\le C_{N,q}\,\|u\|_{L^q(\Omega)}^{1-\vartheta}\,\|\nabla u\|_{L^2(\Omega)}^{\vartheta},
\]
where $\vartheta$ is determined by scale invariance, i.e.
\[
\vartheta=\frac{(2-q)\,N}{2\,N-q\,(N-2)}.
\]
This yields
\[
s\,(1-s)\,\lambda_{s,q}(\Omega)\,\|u\|_{L^q(\Omega)}^2\le C\,\|u\|_{L^q(\Omega)}^{2\,(1-s)\,(1-\vartheta)}\,\|\nabla u\|_{L^2(\Omega)}^{2\,(1-s)\,\vartheta+2\,s},
\]
for a possibly different constant $C=C(N,q)>0$.
By dividing on both sides by $\|u\|^2_{L^q}$, we get
\[
s\,(1-s)\,\lambda_{s,q}(\Omega)\le C\, \left(\frac{\|\nabla u\|^2_{L^2(\Omega)}}{\|u\|_{L^q(\Omega)}^2}\right)^{(1-s)\,\vartheta+s}.
\]
Since this holds for every $u\in C^\infty_0(\Omega)$, we finally get
\begin{equation}
\label{eccoci1}
s\,(1-s)\,\lambda_{s,q}(\Omega)\le C\,\Big(\lambda_{1,q}(\Omega)\Big)^{(1-s)\,\vartheta+s}.
\end{equation}
\item if $\boxed{2<q<2^*}$, we use in \eqref{eccoci} H\"older's inequality
\[
\|u\|^2_{L^2(\Omega)}\le |\Omega|^{1-\frac{2}{q}}\,\|u\|^2_{L^q(\Omega)}.
\]
This now gives
\[
s\,(1-s)\,\lambda_{s,q}(\Omega)\,\|u\|_{L^q(\Omega)}^2\le C\, |\Omega|^{\left(1-\frac{2}{q}\right)\,(1-s)}\,\|u\|^{2\,(1-s)}_{L^q(\Omega)}\,\|\nabla u\|_{L^2(\Omega)}^{2\,s}.
\]
By proceeding as in the previous case, we thus obtain
\begin{equation}
\label{eccoci2}
s\,(1-s)\,\lambda_{s,q}(\Omega)\le C\, |\Omega|^\frac{(q-2)\,(1-s)}{q}\,\Big(\lambda_{1,q}(\Omega)\Big)^s.
\end{equation}
\end{itemize}
From \eqref{eccoci1} and \eqref{eccoci2}, we have obtained that there exists a constant $\widetilde C=\widetilde C(N,q,\Omega,\delta)>0$ such that
\begin{equation}
\label{uniforme}
s\,(1-s)\,\lambda_{s,q}(\Omega)\le \widetilde C,\qquad \mbox{ for every } s_0<s<1.
\end{equation}
Then for every sequence $\{s_k\}_{k\in\mathbb{N}}\subset (s_0,1)$ converging to $1$, we take 
\[
u_k\in\mathcal{D}^{s_k,2}_0(\Omega),\qquad u_k>0 \mbox{ in } \Omega,
\]
to be a minimizer of the variational problem which defines $\lambda_{s_k,q}(\Omega)$. By definition and estimate \eqref{uniforme}, we have 
\begin{equation}\label{boundk}
(1-s_k)\,[u_k]^2_{W^{s_k,2}(\mathbb{R}^N)}=(1-s_k)\,\lambda_{s_k,q}(\Omega)\le \widetilde C,\qquad \mbox{ for every } k\in\mathbb{N}.
\end{equation}
By using \cite[Lemma 3.10]{BPS}, up to consider a subsequence, we have
\begin{equation}
\label{strongL2}
\lim_{k\to\infty} \|u_{k}-u\|_{L^2(\Omega)}=0,
\end{equation}
for some $u\in \mathcal{D}^{1,2}_0(\Omega)$. With a simple argument, from the previous estimate we can also infer 
\[
\lim_{k\to\infty} \|u_k-u\|_{L^q(\Omega)}=0.
\]
Indeed, for $1\le q<2$ this simply follows from H\"older's inequality. For $q>2$, we can use the fractional Sobolev inequality and the interpolation inequality of \cite[Proposition 2.1]{BPS}, so to get
\begin{eqnarray*}
\|u_k-u\|^2_{L^q(\Omega)}&\le& \frac{1}{\lambda_{\beta,2}(\Omega)}\,[u_k-u]^2_{W^{\beta,2}(\mathbb{R}^N)}\\
&\le&C\,\frac{s_k}{s_k-\beta}\|u_k-u\|^{2\,(1-\frac{\beta}{s_k})}_{L^2(\Omega)}\,\left((1-s_k)\,[u_k-u]^2_{W^{s_k,2}(\mathbb{R}^N)}\right)^{\frac{\beta}{s_k}},
\end{eqnarray*}
where $0<\beta<s_k$ is a fixed exponent, taken so that $2^*_\beta=q$. Hence, by the fact that $u\in\mathcal{D}^{1,2}_0(\Omega)$ and using \eqref{boundk} and \eqref{strongL2}, we obtain also in this case that
\[
\lim_{k\to\infty} \|u_k-u\|_{L^q(\Omega)}\le \lim_{k\to\infty} C\,\|u_k-u\|^{1-\frac{\beta}{s_k}}_{L^2(\Omega)}=0,
\]
since the constant $C>0$ in the last inequality can be taken uniform as $k$ goes to $\infty$.
\par
In particular, the function $u$ has unit $L^q$ norm. By using the $\Gamma-$convergence result of \cite[Proposition 3.11]{BPS} and the minimality of $u_{k}$, we get
\begin{equation}
\label{quasiquasi}
\begin{split}
\frac{\omega_N}{2}\,\lambda_{1,q}(\Omega)\le \frac{\omega_N}{2}\,\int_\Omega |\nabla u|^2\,dx&\le \liminf_{k\to\infty} (1-s_k)\,[u_{k}]^2_{W^{s_k,2}(\mathbb{R}^N)}\\
&=  \liminf_{k\to\infty} (1-s_k)\,\lambda_{s_k,q}(\Omega).
\end{split}
\end{equation}
By using \eqref{limsup}, we thus get
\[
\frac{\omega_N}{2}\,\lambda_{1,q}(\Omega)=\lim_{k\to\infty} (1-s_k)\,\lambda_{s_k,q}(\Omega).
\]
This in turn implies that equality must hold everywhere in \eqref{quasiquasi}, thus the limit function $u$ is optimal for $\lambda_{1,q}(\Omega)$. This concludes the proof.
\end{proof}
\begin{oss}[Irregular sets]
The hypothesis of Lipschitz regularity on $\partial\Omega$ could probably be relaxed. However, for a general open bounded set $\Omega\subset\mathbb{R}^N$ the equality \eqref{lim} is not true. Indeed, we can produce a counter-example by using a similar construction to that of \cite[Section 7]{Lqv}, which deals with a related phenomenon. 
\par
By using \cite[Section 10.4.3, Proposition 5]{maz}, we can exhibit a Cantor--type bounded set $F\subset\mathbb{R}^N$ such that
\[
\mathrm{cap}_1(F)>0\qquad \mbox{ but }\qquad \mathrm{cap}_s(F)=0 \mbox{ for every } s<1.
\]
Here $\mathrm{cap}_s(F)$ denotes the $s-$capacity of $F$. In this way, if we consider the open set $B\setminus F$, the set $F$ is ``invisible'' for every $\lambda_{s,q}(B\setminus F)$, when $s<1$. Then we get 
\[
\lim_{s\nearrow 1} (1-s)\,\lambda_{s,q}(B\setminus F)=\lim_{s\nearrow 1} (1-s)\,\lambda_{s,q}(B)=\frac{\omega_N}{2}\,\lambda_{1,q}(B)<\frac{\omega_N}{2}\,\lambda_{1,q}(B\setminus F).
\]
The last strict inequality follows from the fact that $F$ has positive capacity when $s=1$.
\end{oss}

\begin{lm}\label{lm:towardsS}
Let $N\ge 2$ and $0<s<1$. We define the sharp Sobolev constant in $\mathbb{R}^N$
\[
\mathcal{S}_{N,s}:=\lambda_{s,2^*_s}(\mathbb{R}^N)=\inf_{u\in C^\infty_0(\mathbb{R}^N)} \left\{[u]^2_{W^{s,2}(\mathbb{R}^N)}\, :\, \int_{\mathbb{R}^N} |u|^{2^*_s}\,dx=1\right\}.
\] 
Then for every $\Omega\subset\mathbb{R}^N$ open set with finite measure, we have
\[
\lim_{q\nearrow 2^*_s} \lambda_{s,q}(\Omega)=\mathcal{S}_{N,s}.
\]
\end{lm}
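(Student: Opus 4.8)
The plan is to prove the two one-sided bounds
\[
\liminf_{q\nearrow 2^*_s}\lambda_{s,q}(\Omega)\ge \mathcal{S}_{N,s}\qquad\text{and}\qquad \limsup_{q\nearrow 2^*_s}\lambda_{s,q}(\Omega)\le \mathcal{S}_{N,s},
\]
both of which rest on H\"older's inequality; the latter is available here precisely because $|\Omega|<+\infty$.

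For the lower bound, I would fix $q<2^*_s$ and take any $u\in\mathcal{D}^{s,2}_0(\Omega)$ with $\|u\|_{L^q(\Omega)}=1$. Since such a $u$ vanishes almost everywhere outside $\Omega$, H\"older's inequality on $\Omega$ gives
\[
1=\|u\|_{L^q(\Omega)}\le |\Omega|^{\frac{1}{q}-\frac{1}{2^*_s}}\,\|u\|_{L^{2^*_s}(\mathbb{R}^N)},
\]
hence $\|u\|_{L^{2^*_s}(\mathbb{R}^N)}\ge |\Omega|^{-\left(\frac{1}{q}-\frac{1}{2^*_s}\right)}$. Combining this with the fractional Sobolev inequality $[u]^2_{W^{s,2}(\mathbb{R}^N)}\ge \mathcal{S}_{N,s}\,\|u\|^2_{L^{2^*_s}(\mathbb{R}^N)}$ and taking the infimum over all such $u$, we obtain
\[
\lambda_{s,q}(\Omega)\ge \mathcal{S}_{N,s}\,|\Omega|^{-2\left(\frac{1}{q}-\frac{1}{2^*_s}\right)}.
\]
Since the exponent tends to $0$ as $q\nearrow 2^*_s$, the first bound follows.

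For the upper bound, I would exploit the scaling and translation invariance of the Sobolev quotient. Given $\delta>0$, choose $\psi\in C^\infty_0(\mathbb{R}^N)$ with $\|\psi\|_{L^{2^*_s}(\mathbb{R}^N)}=1$ and $[\psi]^2_{W^{s,2}(\mathbb{R}^N)}\le \mathcal{S}_{N,s}+\delta$. Since $\Omega$ is open and nonempty, it contains a ball $B_r(x_0)$; setting $\psi_\lambda(x)=\lambda^{-\frac{N-2s}{2}}\,\psi\!\left(\frac{x-x_0}{\lambda}\right)$, a direct change of variables shows that $[\psi_\lambda]^2_{W^{s,2}(\mathbb{R}^N)}=[\psi]^2_{W^{s,2}(\mathbb{R}^N)}$ and $\|\psi_\lambda\|_{L^{2^*_s}(\mathbb{R}^N)}=1$ for every $\lambda>0$, while $\mathrm{supp}\,\psi_\lambda\subset B_r(x_0)\subset\Omega$ once $\lambda$ is small enough. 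Fix such a competitor $\varphi:=\psi_\lambda\in C^\infty_0(\Omega)\subset\mathcal{D}^{s,2}_0(\Omega)$. Using $\varphi/\|\varphi\|_{L^q(\Omega)}$ as a trial function in the definition of $\lambda_{s,q}(\Omega)$ gives
\[
\lambda_{s,q}(\Omega)\le \frac{[\varphi]^2_{W^{s,2}(\mathbb{R}^N)}}{\|\varphi\|^2_{L^q(\Omega)}},
\]
and since $\varphi$ is a fixed bounded function with compact support, $q\mapsto\|\varphi\|_{L^q(\Omega)}$ is continuous at $q=2^*_s$ by dominated convergence, with limit $\|\varphi\|_{L^{2^*_s}(\Omega)}=1$. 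Hence $\limsup_{q\nearrow 2^*_s}\lambda_{s,q}(\Omega)\le [\varphi]^2_{W^{s,2}(\mathbb{R}^N)}\le \mathcal{S}_{N,s}+\delta$, and letting $\delta\searrow 0$ concludes.

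I do not expect a genuine obstacle here. The only delicate point is that $\mathcal{S}_{N,s}$ admits no minimizer supported in a set of finite measure, so the upper bound cannot be produced by a compactness argument; instead one must concentrate a near-optimal global Sobolev competitor inside $\Omega$, and it is essential to carry out this concentration through the precise dilation $u\mapsto \lambda^{-(N-2s)/2}u(\cdot/\lambda)$, the unique scaling leaving both $[\,\cdot\,]^2_{W^{s,2}(\mathbb{R}^N)}$ and $\|\cdot\|_{L^{2^*_s}(\mathbb{R}^N)}$ invariant, so that no approximation error is introduced.
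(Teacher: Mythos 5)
Your argument is correct, and the lower bound is exactly the paper's: the H\"older step $\|u\|_{L^q(\Omega)}\le |\Omega|^{\frac1q-\frac1{2^*_s}}\|u\|_{L^{2^*_s}(\mathbb{R}^N)}$ (legitimate since admissible functions vanish a.e.\ outside $\Omega$, as $\mathcal{D}^{s,2}_0(\Omega)\subset L^{2^*_s}(\mathbb{R}^N)$ for $N\ge2$) combined with the Sobolev inequality is just the paper's chain $\mathcal{S}_{N,s}\le\lambda_{s,2^*_s}(\Omega)\le|\Omega|^{\frac2q-\frac2{2^*_s}}\lambda_{s,q}(\Omega)$ written out. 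Where you genuinely diverge is the upper bound. The paper builds its trial function from the explicit Aubin--Talenti-type extremals $\mathcal{U}(\varrho)=(1+\varrho)^{\frac{2s-N}{2}}$ of \cite[Theorem 1.1]{CT}: these are not compactly supported, so they must be truncated at level $\mathcal{U}(R/t)$ inside a ball $B_R(x_0)\subset\Omega$, and one then has to send the concentration parameter $t\to0$ to make the truncation error in the $L^{2^*_s}$ norm disappear and recover the sharp quotient. You instead exploit that $\mathcal{S}_{N,s}$ is \emph{defined} as an infimum over $C^\infty_0(\mathbb{R}^N)$, so a compactly supported near-optimizer $\psi$ exists for free; the dilation $\psi_\lambda=\lambda^{-\frac{N-2s}{2}}\psi(\tfrac{\cdot-x_0}{\lambda})$ leaves both $[\,\cdot\,]_{W^{s,2}(\mathbb{R}^N)}$ and $\|\cdot\|_{L^{2^*_s}(\mathbb{R}^N)}$ invariant and, for small $\lambda$, places the support inside $\Omega$ with no error at all, after which only the elementary limit $\|\varphi\|_{L^q(\Omega)}\to\|\varphi\|_{L^{2^*_s}(\Omega)}=1$ (dominated convergence, as you note) and $\delta\searrow0$ are needed. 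Your route is thus more self-contained (no appeal to the explicit extremals or to \cite{CT}, no truncation estimate), at the price of producing the competitor non-constructively; the paper's route gives a concrete family of trial functions, which is occasionally useful but is not needed for this qualitative limit. Both proofs are complete and correct.
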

\begin{proof}
The proof is standard, we give it for completeness.
We set
\[
\mathcal{U}(\varrho)=\left(1+\varrho\right)^\frac{2\,s-N}{2},
\]
then we know that functions of the form
\[
U_{t,x_0}(x)=\mathcal{U}\left(\frac{|x-x_0|}{t}\right),\qquad t>0,\,x_0\in\mathbb{R}^N,
\] 
are such that
\begin{equation}
\label{estremali}
\frac{[U_{t,x_0}]^2_{W^{s,2}(\mathbb{R}^N)}}{\displaystyle\left(\int_{\mathbb{R}^N} |U_{t,x_0}|^{2^*_s}\,dx\right)^\frac{2}{2^*_s}}=\mathcal{S}_{N,s},
\end{equation}
see \cite[Theorem 1.1]{CT}.
Since $\Omega$ is an open set, there exists $B_R(x_0)\subset\Omega$. We consider the ``truncated'' extremal
\[
\varphi_t(x)=\left(\mathcal{U}\left(\frac{|x-x_0|}{t}\right)-\mathcal{U}\left(\frac{R}{t}\right)\right)_+,\qquad x\in\Omega.
\]
Then the function $\varphi_t/\|\varphi_t\|_{L^q(\Omega)}$ is admissible in the variational problem which defines $\lambda_{s,q}(\Omega)$. Thus we get
\[
\begin{split}
\limsup_{q\nearrow 2^*_s}\lambda_{s,q}(\Omega)\le \limsup_{q\nearrow 2^*_s}\frac{[\varphi_t]_{W^{s,2}(\mathbb{R}^N)}^2}{\|\varphi_t\|_{L^q(\Omega)}^2}&\le \frac{[U_{t,x_0}]_{W^{s,2}(\mathbb{R}^N)}^2}{\left(\displaystyle\int_{B_R(x_0)} \left(\mathcal{U}\left(\frac{|x-x_0|}{t}\right)-\mathcal{U}\left(\frac{R}{t}\right)\right)^{2^*_s}\,dx\right)^\frac{2}{2^*_s}}\\
&=\frac{t^{N-2\,s}\,[\mathcal{U}]_{W^{s,2}(\mathbb{R}^N)}^2}{t^{N-2\,s}\left(\displaystyle\int_{B_\frac{R}{t}(0)} \left(\mathcal{U}(|x|)-\mathcal{U}\left(\frac{R}{t}\right)\right)^{2^*_s}\,dx\right)^\frac{2}{2^*_s}}.
\end{split}
\]
By taking the limit as $t$ goes to $0$ and recalling \eqref{estremali}, we can infer
\[
\limsup_{q\nearrow 2^*_s}\lambda_{s,q}(\Omega)\le \mathcal{S}_{N,s}.
\]
In order to prove that
\[
\liminf_{q\nearrow 2^*_s}\lambda_{s,q}(\Omega)\ge \mathcal{S}_{N,s},
\]
it is sufficient to use H\"older's inequality. Indeed, this gives that
\[
\lambda_{s,2^*_s}(\Omega)\le |\Omega|^{\frac{2}{q}-\frac{2}{2^*_s}}\,\lambda_{s,q}(\Omega),\qquad \mbox{ for } q<2^*_s.
\]
By observing that $\lambda_{s,2^*_s}(\mathbb{R}^N)\le \lambda_{s,2^*_s}(\Omega)$, we can now get the desired conclusion.
\end{proof}

\section{Two equivalent seminorms in $W^{s,2}(\mathbb{R}^N)$}
\label{sec:B}

For every measurable function $u:\mathbb{R}^N\to\mathbb{R}$ and every $i=1,\dots,N$, we define
\[
[u]_{w_i^{s,2}(\mathbb{R}^N)}=\left(\int_{0}^{+\infty} \int_{\mathbb{R}^N} \frac{|u(x+\varrho\,\mathbf{e}_i)-u(x)|^2}{\varrho^{1+2\,s}}\,dx\,d\varrho\right)^\frac{1}{2}.
\]
In Lemma \ref{prop:space} we used that the sum of these seminorms is equivalent to the standard Sobolev-Slobodecki\u{\i} seminorm. Even if this result should belong to the folklore on Sobolev spaces, we have not been able to find a reference for this fact. 
\begin{prop}
\label{prop:equiRN}
Let $0<s<1$, then for every $u\in C^\infty_0(\mathbb{R}^N)$ we have
\[
\frac{1}{C}\,\sum_{i=1}^N [u]_{w_i^{s,2}(\mathbb{R}^N)}\le [u]_{W^{s,2}(\mathbb{R}^N)}\le C\,\sum_{i=1}^N [u]_{w_i^{s,2}(\mathbb{R}^N)},
\]
for a constant $C=C(N)>1$.
\end{prop}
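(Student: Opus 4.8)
The plan is to pass to the Fourier side, where both seminorms become explicit weighted $L^2$--norms of $\widehat u$, and then to reduce the inequality to an elementary comparison of two multipliers. For $u\in C^\infty_0(\mathbb{R}^N)$ the change of variables $y=x+h$ gives
\[
[u]_{W^{s,2}(\mathbb{R}^N)}^2=\int_{\mathbb{R}^N}\frac{\|u(\cdot+h)-u(\cdot)\|^2_{L^2(\mathbb{R}^N)}}{|h|^{N+2\,s}}\,dh ,
\]
and Plancherel's theorem (with the Fourier convention fixed in the paper) yields $\|u(\cdot+h)-u(\cdot)\|^2_{L^2(\mathbb{R}^N)}=\int_{\mathbb{R}^N}4\,\sin^2(h\cdot\xi/2)\,|\widehat u(\xi)|^2\,d\xi$. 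Since all integrands are nonnegative, Tonelli's theorem allows me to integrate in $h$ first; after rotating coordinates so that $h\cdot\xi=|\xi|\,h_1$ and then rescaling $h\mapsto h/|\xi|$, the $h$--integral equals $c_{N,s}\,|\xi|^{2\,s}$, where $c_{N,s}:=\int_{\mathbb{R}^N}4\,\sin^2(h_1/2)\,|h|^{-N-2\,s}\,dh$ is positive and finite for $0<s<1$. Thus $[u]_{W^{s,2}(\mathbb{R}^N)}^2=c_{N,s}\int_{\mathbb{R}^N}|\xi|^{2\,s}\,|\widehat u(\xi)|^2\,d\xi$. The very same computation, now based on $\|u(\cdot+\varrho\,\mathbf{e}_i)-u(\cdot)\|^2_{L^2(\mathbb{R}^N)}=\int_{\mathbb{R}^N}4\,\sin^2(\varrho\,\xi_i/2)\,|\widehat u(\xi)|^2\,d\xi$ together with the one-dimensional rescaling $\varrho\mapsto\varrho/|\xi_i|$, gives $[u]_{w_i^{s,2}(\mathbb{R}^N)}^2=a_s\int_{\mathbb{R}^N}|\xi_i|^{2\,s}\,|\widehat u(\xi)|^2\,d\xi$, where $a_s:=\int_0^{+\infty}4\,\sin^2(\varrho/2)\,\varrho^{-1-2\,s}\,d\varrho\in(0,+\infty)$.

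Next I would sum over $i=1,\dots,N$ and invoke the pointwise inequalities
\[
|\xi|^{2\,s}=\left(\sum_{i=1}^N\xi_i^2\right)^s\le\sum_{i=1}^N|\xi_i|^{2\,s}\le N\,\left(\sum_{i=1}^N\xi_i^2\right)^s=N\,|\xi|^{2\,s},
\]
which follow from the subadditivity of $t\mapsto t^s$ and from Jensen's inequality for the concave function $t\mapsto t^s$. This gives $a_s\int|\xi|^{2\,s}|\widehat u|^2\le\sum_{i=1}^N[u]_{w_i^{s,2}(\mathbb{R}^N)}^2\le N\,a_s\int|\xi|^{2\,s}|\widehat u|^2$, so that $\sum_{i=1}^N[u]_{w_i^{s,2}(\mathbb{R}^N)}^2$ is comparable, up to factors $1$ and $N$, to $(a_s/c_{N,s})\,[u]_{W^{s,2}(\mathbb{R}^N)}^2$. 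Passing from the sum of the squares to the square of the sum only costs the elementary bounds $\sum_i b_i^2\le(\sum_i b_i)^2\le N\sum_i b_i^2$ for $b_i\ge0$. At this point the whole proposition has been reduced to showing that the ratio $a_s/c_{N,s}$ is bounded above and below by constants depending only on $N$.

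This last step is the only real obstacle, because both $a_s$ and $c_{N,s}$ diverge as $s\searrow0$ and as $s\nearrow1$; the idea is to compute their ratio exactly. Writing $h=(h_1,h')\in\mathbb{R}\times\mathbb{R}^{N-1}$ in the integral defining $c_{N,s}$ and integrating in $h'$ first (the factor $4\sin^2(h_1/2)$ does not depend on $h'$), the substitution $h'\mapsto|h_1|\,h'$ produces
\[
c_{N,s}=b_{N,s}\int_{\mathbb{R}}\frac{4\,\sin^2(h_1/2)}{|h_1|^{1+2\,s}}\,dh_1=2\,b_{N,s}\,a_s,\qquad b_{N,s}:=\int_{\mathbb{R}^{N-1}}\frac{dh'}{(1+|h'|^2)^{\frac{N+2\,s}{2}}} .
\]
For $N\ge2$ the integrand defining $b_{N,s}$ is nonincreasing in $s$ and the integral converges for every $s\in[0,1]$, hence $0<b_{N,1}\le b_{N,s}\le b_{N,0}<+\infty$ with bounds depending only on $N$ (and $b_{1,s}\equiv1$). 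Therefore $a_s/c_{N,s}=1/(2\,b_{N,s})$ lies in the fixed interval $[1/(2\,b_{N,0}),1/(2\,b_{N,1})]$, and combining this with the inequalities above yields the claim, for instance with $C=\max\{2,\ \sqrt{2\,b_{N,0}}\,,\ N/\sqrt{2\,b_{N,1}}\}$, which depends only on $N$.
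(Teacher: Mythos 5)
Your proof is correct, but it follows a genuinely different route from the one in the paper. You work entirely on the Fourier side: via Plancherel you identify both seminorms as weighted $L^2$--norms of $\widehat u$ with the explicit multipliers $c_{N,s}\,|\xi|^{2\,s}$ and $a_s\,|\xi_i|^{2\,s}$, reduce the equivalence to the elementary pointwise bounds $|\xi|^{2\,s}\le\sum_i|\xi_i|^{2\,s}\le N\,|\xi|^{2\,s}$, and obtain the $s$--uniformity of the constant from the exact identity $c_{N,s}=2\,b_{N,s}\,a_s$ with $b_{N,s}$ pinched between $b_{N,1}$ and $b_{N,0}$. The paper instead proves the lower bound by comparing the $K$--functionals $K_i(t,u)\le K(t,u)$ and invoking the interpolation characterization of $W^{s,2}$ from Tartar's book, and the upper bound by a telescoping decomposition of the increment $u(x+\varrho\,\omega)-u(x)$ along coordinate directions, with the triangle inequality and changes of variables in physical space. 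Your argument buys self-containedness (no appeal to interpolation theory), completely explicit constants, and a transparent proof that $C$ can be taken independent of $s$ — which matches the spirit of the paper's attention to the behavior of constants — at the price of being tied to the Hilbertian exponent $p=2$, whereas the paper's physical-space argument would survive essentially unchanged for $W^{s,p}$ seminorms. Two microscopic points you may wish to make explicit: for $\xi_i=0$ the rescaling $\varrho\mapsto\varrho/|\xi_i|$ is vacuous but the identity $\int_0^{+\infty}4\,\sin^2(\varrho\,\xi_i/2)\,\varrho^{-1-2\,s}\,d\varrho=a_s\,|\xi_i|^{2\,s}$ still holds trivially (both sides vanish), and Jensen's inequality actually gives the sharper factor $N^{1-s}$, of which you only use the weaker bound $N$; neither affects the proof.
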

\begin{proof}
We prove the two inequalities separately. In order to prove the first one, we define the $K-$functionals
\[
K(t,u)=\inf_{v\in C^\infty_0(\mathbb{R}^N)}\Big[ \|u-v\|_{L^2(\mathbb{R}^N)} +t\,\|\nabla v\|_{L^2(\mathbb{R}^N)}\Big],
\]
and
\[
K_i(t,u)=\inf_{v\in C^\infty_0(\mathbb{R}^N)}\Big[ \|u-v\|_{L^2(\mathbb{R}^N)} +t\,\|v_{x_i}\|_{L^2(\mathbb{R}^N)}\Big],\qquad i=1,\dots,N.
\]
Observe that we trivially have
\[
K_i(t,u)\le K(t,u),\qquad i=1,\dots,N.
\]
By using this and \cite[Theorem 35.2]{Ta}, we have 
\[
\sum_{i=1}^N\left(\int_0^\infty \left(\frac{K_i(t,u)}{t^s}\right)^2\,\frac{dt}{t}\right)^\frac{1}{2}\le C\,[u]_{W^{s,2}(\mathbb{R}^N)},
\]
thus in order to prove the first inequality in the statement, we only need to prove that
\begin{equation}
\label{low}
[u]_{w_i^{s,2}(\mathbb{R}^N)}\le C\,\left(\int_0^\infty \left(\frac{K_i(t,u)}{t^s}\right)^2\,\frac{dt}{t}\right)^\frac{1}{2},\qquad \mbox{ for } i=1,\dots,N.
\end{equation}
To prove \eqref{low}, we take $\varepsilon>0$ and $\varrho>0$, then there exists $v\in C^\infty_0(\mathbb{R}^N)$ such that
\begin{equation}
\label{dai}
\|u-v\|_{L^2(\mathbb{R}^N)}+\frac{\varrho}{2}\,\|v_{x_i}\|_{L^2(\mathbb{R}^N)}\le (1+\varepsilon)\,K_i\left(\frac{\varrho}{2},u\right).
\end{equation}
Thus we get\footnote{In the second inequality, we use the classical fact
\[
\left(\int_{\mathbb{R}^N} \frac{|v(x+\varrho\,\mathbf{e}_i)-v(x)|^2}{|\varrho|^2}\,dx\right)^\frac{1}{2}\le \left(\int_{\mathbb{R}^N} |v_{x_i}|^2\,dx\right)^\frac{1}{2}.
\]}
\[
\begin{split}
\left(\int_{\mathbb{R}^N} \frac{|u(x+\varrho\,\mathbf{e}_i)-u(x)|^2}{\varrho^{1+2\,s}}\,dx\right)^\frac{1}{2}&\le \left(\int_{\mathbb{R}^N} \frac{|u(x+\varrho\,\mathbf{e}_i)-v(x+\varrho\,\mathbf{e}_i)-u(x)+v(x)|^2}{\varrho^{1+2\,s}}\,dx\right)^\frac{1}{2}\\
&+\left(\int_{\mathbb{R}^N} \frac{|v(x+\varrho\,\mathbf{e}_i)-v(x)|^2}{\varrho^{1+2\,s}}\,dx\right)^\frac{1}{2}\\
&\le 2\,\varrho^{-\frac{1}{2}-s}\,\|u-v\|_{L^2(\mathbb{R}^N)}+\varrho^{1-\frac{1}{2}-s}\,\|v_{x_i}\|_{L^2(\mathbb{R}^N)}\\
&= 2\,\varrho^{-\frac{1}{2}-s}\,\left(\|u-v\|_{L^2(\mathbb{R}^N)}+\frac{\varrho}{2}\,\|v_{x_i}\|_{L^2(\mathbb{R}^N)}\right).
\end{split}
\]
By using \eqref{dai}, we then obtain
\[
\int_{\mathbb{R}^N} \frac{|u(x+\varrho\,\mathbf{e}_i)-u(x)|^2}{\varrho^{1+2\,s}}\,dx\le 4\,(1+\varepsilon)^2\,\varrho^{-1-2\,s}\,K_i\left(\frac{\varrho}{2},u\right)^2.
\]
We now integrate with respect to $\varrho>0$ and make a change of variable. This yields directly \eqref{low}, as desired.
\vskip.2cm\noindent
In order to prove the second inequality, we observe that
\[
\begin{split}
[u]^2_{W^{s,2}(\mathbb{R}^N)}&=\int_{\mathbb{R}^N}\left( \int_{\mathbb{R}^N} \frac{|u(x+h)-u(x)|^2}{|h|^{2\,s}}\,dx\right)\,\frac{dh}{|h|^N}\\
&=\int_{\mathbb{S}^{N-1}} \int_{0}^{+\infty} \left( \int_{\mathbb{R}^N} \frac{|u(x+\varrho\,\omega)-u(x)|^2}{\varrho^{2\,s}}\,dx\right)\,\frac{d\varrho}{\varrho}\,d\mathcal{H}^{N-1}.
\end{split}
\]
For $\omega=(\omega_1,\dots,\omega_N)\in\mathbb{S}^{N-1}$, we use the triangle inequality so to get
\[
\begin{split}
\int_0^{+\infty} \int_{\mathbb{R}^N}& \frac{|u(x+\varrho\,\omega)-u(x)|^2}{\varrho^{1+2\,s}}\,dx\,d\varrho\\
&=\int_0^{+\infty} \int_{\mathbb{R}^N} \frac{\left|u\left(x+\sum_{i=1}^N\varrho\,\omega_i\,\mathbf{e}_i\right)-u(x)\right|^2}{\varrho^{1+2\,s}}\,dx\,d\varrho\\
&\le N\, \sum_{j=0}^{N-1} \int_0^{+\infty} \int_{\mathbb{R}^N} \frac{\left|u\left(x+\sum_{i=1}^{N-j}\varrho\,\omega_i\,\mathbf{e}_i\right)-u\left(x+\sum_{i=1}^{N-j-1}\varrho\,\omega_i\,\mathbf{e}_i\right)\right|^2}{\varrho^{1+2\,s}}\,dx\,d\varrho \\
&=N\, \sum_{j=1}^{N} \int_{0}^{+\infty} \int_{\mathbb{R}^N} \frac{\left|u\left(x+\varrho\,\omega_{j}\,\mathbf{e}_{j}\right)-u(x)\right|^2}{\varrho^{1+2\,s}}\,dx\,d\varrho, 
\end{split}
\]
where we used the simple change of variable
\[
x+\sum_{i=1}^{N-j-1}\varrho\,\omega_i\,\mathbf{e}_i=y.
\]
We thus obtain
\[
\begin{split}
[u]^2_{W^{s,2}(\mathbb{R}^N)}&\le N\,\sum_{j=1}^{N} \int_{\mathbb{S}^{N-1}}\int_{0}^{+\infty} \int_{\mathbb{R}^N} \frac{\left|u\left(x+\varrho\,\omega_{j}\,\mathbf{e}_{j}\right)-u(x)\right|^2}{\varrho^{1+2\,s}}\,dx\,d\varrho\\
&=N\,\sum_{j=1}^{N} \int_{\{\omega_j>0\}\cap \mathbb{S}^{N-1}}\int_{0}^{+\infty} \int_{\mathbb{R}^N} \frac{\left|u\left(x+\varrho\,\omega_{j}\,\mathbf{e}_{j}\right)-u(x)\right|^2}{\varrho^{1+2\,s}}\,dx\,d\varrho\\
&+N\,\sum_{j=1}^{N} \int_{\{\omega_j<0\}\cap \mathbb{S}^{N-1}}\int_{0}^{+\infty} \int_{\mathbb{R}^N} \frac{\left|u\left(x+\varrho\,\omega_{j}\,\mathbf{e}_{j}\right)-u(x)\right|^2}{\varrho^{1+2\,s}}\,dx\,d\varrho.
\end{split}
\]
We can use the change of variable $\omega_j\,\varrho=t$ in the first integral and $\omega_j\,\varrho=-t$ in the second one,
so to obtain
\[
\begin{split}
[u]^2_{W^{s,2}(\mathbb{R}^N)}&\le N\, \sum_{j=1}^{N} \left(\int_{\{\omega_j>0\}\cap \mathbb{S}^{N-1}}\omega_j^{2\,s}\,d\mathcal{H}^{N-1}\right)\,\int_{0}^{+\infty} \int_{\mathbb{R}^N} \frac{\left|u\left(x+t\,\mathbf{e}_{j}\right)-u(x)\right|^2}{t^{1+2\,s}}\,dx\,dt\\
&+N\, \sum_{j=1}^{N} \left(\int_{\{\omega_j>0\}\cap \mathbb{S}^{N-1}}\omega_j^{2\,s}\,d\mathcal{H}^{N-1}\right)\,\int_{0}^{+\infty} \int_{\mathbb{R}^N} \frac{\left|u\left(x-t\,\mathbf{e}_{j}\right)-u(x)\right|^2}{t^{1+2\,s}}\,dx\,dt.
\end{split}
\]
In conclusion, we obtained
\[
[u]_{W^{s,2}(\mathbb{R}^N)}\le \sqrt{2\,N}\,\left(\int_{\{\omega_1>0\}\cap \mathbb{S}^{N-1}}\,|\omega_1|^{2\,s}\,d\mathcal{H}^{N-1}\right)^\frac{1}{2}\,\left(\sum_{j=1}^N [u]^2_{w_j^{s,2}(\mathbb{R}^N)}\right)^\frac{1}{2}.
\]
By using some standard algebraic manipulations, we then obtain the desired inequality.
\end{proof}

\medskip

\end{document}